\documentclass[11pt]{article}
\usepackage{amsfonts,amsmath,amssymb,amsthm}
\usepackage{graphicx}
\usepackage{xcolor}
\usepackage[top=1.25in, bottom=1.5in, left=1.5in, right=1.5in]{geometry}

\usepackage{tikz}
\usetikzlibrary{shapes}

\tikzstyle{mybox} = [draw=black, fill=white, thick,
    rectangle]

\newcommand{\fancybox}[1]{%
\begin{tikzpicture}
\node [mybox] (box){%
    \begin{minipage}[t]{0.56\textwidth}
    #1
    \end{minipage}
};
\end{tikzpicture}%
}

\newcommand{\zed}{\ensuremath{\mathbb{Z}}}

\newcommand{\B}{\ensuremath{\mathcal{B}}}
\newcommand{\pc}{\rho}

\newcommand{\STS}{\ensuremath{\mathcal{S}}}
\newcommand{\Par}{\ensuremath{\mathcal{P}}}
\newcommand{\are}{\ensuremath{\mathcal{R}}}

\definecolor{grey}{rgb}{0.71,0.4,0.11}

\newtheorem{Theorem}{Theorem}[section]

\newtheorem{Lemma}[Theorem]{Lemma}

\newtheorem{Corollary}[Theorem]{Corollary}
\newtheorem{Remark}[Theorem]{Remark}

\title {\bf  On maximum parallel classes in packings}
 \author{Douglas R. Stinson\thanks{D.R.\ Stinson's research was supported by  
Natural Sciences and Engineering Research Council of Canada discovery grant RGPIN-03882.
}\\
 David R.\ Cheriton School of Computer Science
 \\
  University of Waterloo\\
 Waterloo, Ontario, N2L 3G1, 
  Canada\\ \texttt{dstinson@uwaterloo.ca}\\
 \and 
 Ruizhong Wei\thanks{R.\ Wei's research was supported by  
Natural Sciences and Engineering Research Council of Canada discovery grant RGPIN-05610.
}\\
 Department of Computer Science\\
 Lakehead Univeristy\\
 Orillia, Ontario, L3V 0B9,  Canada\\
 \texttt{rwei@lakeheadu.ca}
 }

\date{\today}

\begin{document}
\maketitle

\begin{abstract}
  The integer $\beta ( \pc, v, k)$ is defined to be the maximum number of blocks in any $(v, k)$-packing in which
  the maximum partial parallel class (or PPC) has size $\pc$. This problem was introduced and studied by Stinson in \cite{stinson} for the case $k=3$. Here, we mainly consider the case  $k = 4$ and we obtain some
  upper bounds and lower bounds on  $\beta (\pc, v, 4)$.
   We also provide some explicit constructions of $(v,4)$-packings
 having a maximum PPC of a given size $\pc$. For small values of $\pc$, the number of blocks of the constructed packings are very close 
 to the upper bounds on  $\beta (\pc, v, 4)$. Some of our methods are extended to the cases  $k > 4$.
\end{abstract}

\section{Introduction}

For positive integers $v$ and $k$ with $2 \leq k \leq v-1$, a  $(v, k)$-{\em packing} is a pair $\STS = (X, \B)$, where $X$ is a set of $v$ points
and $\B$ is a set of $k$-subsets of $X$, called {\em blocks}, 
such that every pair of points occurs in at most one block. 

The {\em packing number} $D(v, k)$ is the maximum number of blocks in any $(v, k)$-packing. For $k = 4$, we have the following
result (see \cite{SWY}).

\begin{Theorem}
$D(v, 4) = \lfloor {v\over 4}\lfloor {v-1\over 3}\rfloor\rfloor - \epsilon$, where
\[
\epsilon = \left\{
\begin{array}{cl}
1&{\mbox{if $v\equiv 7$ or $10 \bmod{12}, v\neq 10, 19$}}\\
1&\mbox{if $v = 9$ or $17$}\\
2&\mbox{if $v = 8, 10$ or $11$}\\
3&\mbox{if $v = 19$}\\
0&\mbox{otherwise.}\\
\end{array}
\right.
\]
\end{Theorem}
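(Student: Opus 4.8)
The plan is to prove the formula in two directions: an upper bound $D(v,4) \le \lfloor \frac{v}{4}\lfloor\frac{v-1}{3}\rfloor\rfloor - \epsilon$ and a matching set of constructions. For the upper bound I would first run the standard double-counting (replication) argument. Fix a point $x$ and let $r_x$ be the number of blocks through $x$; since no pair is repeated, the $3r_x$ points appearing with $x$ in those blocks are distinct, so $3r_x \le v-1$ and $r_x \le \lfloor (v-1)/3\rfloor$. Summing $\sum_{x} r_x = 4b$ over all $v$ points (each block is counted four times) and applying the floor gives $b \le \lfloor \frac{v}{4}\lfloor\frac{v-1}{3}\rfloor\rfloor$, which is the main term.

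To obtain the $-\epsilon$ corrections I would analyze the \emph{leave} of a packing, i.e.\ the graph $L$ on $X$ whose edges are exactly the pairs covered by no block. Counting pairs gives $|E(L)| = \binom{v}{2} - 6b$, while the leave degree of each point is $\deg_L(x) = (v-1) - 3r_x$, so every degree is congruent to $v-1 \pmod 3$ and $|E(L)| \equiv \binom{v}{2}\pmod 6$. For the residue classes $v \equiv 7,10 \pmod{12}$, these degree congruences together with the handshake lemma make the leave implied by equality in the replication bound infeasible as a simple graph (for instance, at $v=7$ a leave of three edges would need degree sequence $(3,3,0,\dots)$, which is impossible), so $L$ must contain strictly more edges, costing one block and producing $\epsilon = 1$. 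The sporadic values $v \in \{8,9,10,11,17,19\}$ lie below the range where the asymptotic argument is tight; for these I would carry out a finer case analysis of the admissible leave graphs (or, where no clean argument presents itself, an exhaustive search) to pin down the exact deficiencies $2,1,2,2,1,3$ recorded in the statement.

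For the matching lower bound I would exhibit packings achieving the reduced bound for every $v$. When $v \equiv 1,4 \pmod{12}$ a Steiner system $S(2,4,v)$ exists and already covers every pair, giving equality with $\epsilon = 0$; the remaining residue classes are handled by placing an optimal collection of blocks on a near-resolvable or group-divisible structure so that the leave is as small as the congruences allow. Concretely, I would set up the standard recursive machinery — group divisible designs with block size $4$, Wilson's fundamental construction, and PBD-closure — to reduce all sufficiently large $v$ to a finite list of ingredients, supply those ingredients by difference families over $\zed_n$ and direct constructions, and dispose of the small exceptions $v \in \{8,9,10,11,17,19\}$ by hand.

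The main obstacle I expect is the construction side together with the bookkeeping for the exceptional cases: the upper bound is essentially a one-line counting argument once the leave congruences are written down, but producing packings that are simultaneously optimal in \emph{every} residue class modulo $12$ requires a careful recursive scheme and a verified stock of small base designs, and it is precisely at the borderline small values where both the extremal bound and the explicit construction are most delicate.
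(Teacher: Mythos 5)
You should first be aware that the paper does not prove this statement at all: it is quoted as a known result with a citation to the packing-number survey \cite{SWY}, and the underlying theorem is due to Brouwer (the determination of $D(v,4)$ from 1979). So there is no internal proof to compare against; your proposal has to be judged on its own as a reconstruction of that literature result. Judged that way, your overall architecture is the correct and standard one, and the parts you actually carry out are sound. The replication/double-counting argument gives the main term, and your leave-graph congruence argument is right and does generalize: for any $v\equiv 7,10 \pmod{12}$, equality in the counting bound forces a leave with exactly $3$ edges in which every degree is divisible by $3$, and a simple graph with $3$ edges and degree sum $6$ concentrated on degrees in $\{0,3,6\}$ cannot exist, so $\epsilon\ge 1$ in those classes. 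That is the genuine content of the upper-bound corrections for the two infinite exceptional classes.

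The gap is on the lower-bound side, which is where essentially all of the difficulty of this theorem lives, and your proposal leaves it as a plan rather than a proof. Saying ``set up the standard recursive machinery --- group divisible designs with block size $4$, Wilson's fundamental construction, and PBD-closure --- to reduce all sufficiently large $v$ to a finite list of ingredients, supply those ingredients by difference families'' names the toolbox but does none of the work: one must produce, for \emph{each} residue class modulo $12$, explicit optimal packings whose leaves realize the minimum permitted by the congruences (for $v\equiv 7,10\pmod{12}$ this means exhibiting packings whose leave has exactly $4$ or so edges rather than $3$, i.e.\ showing $\epsilon=1$ rather than merely $\epsilon\ge 1$), verify all the needed ingredient designs, and close the recursion without leaving undetermined small cases. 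Similarly, the sporadic values $v\in\{8,9,10,11,17,19\}$ --- in particular $v=19$, where $\epsilon=3$ reflects a nontrivial nonexistence fact and not just a congruence obstruction --- are deferred to ``exhaustive search,'' which is an admission that the argument is not supplied. In short: the upper bound with its two infinite correction classes is essentially complete in your write-up, but the matching constructions and the six sporadic deficiencies, which constitute the bulk of Brouwer's theorem, are only promised, so the proposal as it stands is an outline rather than a proof.
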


When every pair of $X$ occurs in exactly one block, the packing is  a {\em balanced incomplete block design} and it is 
denoted as a $(v,k,1)$-BIBD. 

A {\em parallel class} of a packing $\STS$ is a set of disjoint blocks that partitions $X$.
 A \emph{partial parallel class} (or \emph{PPC}) in $\STS$ is any set of disjoint blocks in $\B$. 
 The \emph{size} of a PPC is the number of blocks
in it. A PPC of size $\pc$ is maximum if there does not exist any PPC in $\STS$ of size $\pc + 1$.

Define $\beta (\pc, v, k)$ to be the maximum number of blocks in any $(v, k)$-packing in which the maximum partial parallel class has size
$\pc$. Stinson \cite{stinson} studied $\beta (\pc, v, 3)$ and  gave some explicit constructions
of $(v, 3)$-packings with largest PPC of size $\pc$. 
Upper bounds for $\beta (\pc, v, 3)$ were also proven in \cite{stinson}. 

In this paper, we generalize the results of \cite{stinson} to the case  $k = 4$. We give 
two constructions of $(v, 4)$-packings with largest PPC of size $\pc$ in Section 2.
 Then we provide three upper bounds for $\beta(\pc, v, 4)$ in Section 3. Section 4 determines some exact values of
 $\beta(\pc, v, 4)$.
Some of our results can be generalized to the cases  $k > 4$, as discussed  in Section 5. 

\section{Constructions}\label{s.con}

Stinson \cite{stinson} constructed $(v, 3)$-packings with small maximum PPCs using Room squares. A natural generalization
of that method, to construct 
$(v, 4)$-packings, is obtained by using  Kirkman squares. 

A \emph{Kirkman square}, denoted $KS_k(n)$, is a $t\times t$ array, say $\are$, where $t = \frac{n-1}{k-1}$,  defined on an $n$-set $V$, such that
\begin{enumerate}
\item
every point of $V$ is contained in one cell of each row and column of $\are$,
\item
each cell of $\are$  either is empty or contains a $k$-subset of $V$, and
\item
the collection of blocks obtained from the non-empty cells of $\are$  is an $(n, k,1)$-BIBD.
\end{enumerate}

A $KS_2(n)$ is known as a \emph{Room square}. 
For $k = 3$, \cite{kirkmanSquare} and \cite{kirkmanSquare2} proved the following.

\begin{Theorem}
Let $n$ be a positive integer, $n \equiv 3 \bmod 6$.  Then there exists a $KS_3(n)$ except possibly for  $n \in \{9,15, 21, 141, 153, 165, 177, 189, 231, 249, 261, 285, 351, 357\}$.
\end{Theorem}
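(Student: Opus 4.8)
The plan is to establish this as an exact existence result through the standard recursive machinery of design theory, combining direct (difference-family) constructions for a finite stock of base orders with frame-based constructions that propagate existence to all sufficiently large admissible $n$, and then reconciling the recursion with the small orders to pin down the exception set.

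First I would record the governing arithmetic. Writing $n = 6m+3$, the array size is $t = (n-1)/2 = 3m+1$, and a short count shows each row and each column must contain exactly $2m+1 = n/3$ disjoint blocks, i.e.\ a full parallel class. Hence a $KS_3(n)$ is precisely a Kirkman triple system of order $n$ carrying two \emph{orthogonal} resolutions — the row resolution and the column resolution, orthogonal because each row class meets each column class in a single cell. In other words a $KS_3(n)$ is the same object as a doubly resolvable $(n,3,1)$-BIBD, and this reformulation lets me import the entire apparatus developed for doubly resolvable designs.

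Second, I would construct a finite supply of small $KS_3(n)$ directly. For orders admitting a sharply transitive group action (cyclic, or $\mathbb{Z}_t$ acting on the array), I would use starter–adder and difference-family methods: a suitable frame starter together with a compatible adder produces both resolutions simultaneously, and the very smallest orders I would settle by explicit or computer search. These serve both as base cases and, more importantly, as ingredient \emph{fillers} and \emph{frames} for the recursion.

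Third — the heart of the argument — I would develop Kirkman frame constructions. A Kirkman frame of type $g^u$ is a group-divisible design on $u$ groups of size $g$ whose blocks partition into holey parallel classes each missing exactly one group; its doubly resolvable analogue carries two orthogonal such frame resolutions. I would build a stock of these frames using difference methods over abelian groups together with a Wilson-type fundamental construction, inflating a master GDD by ingredient doubly resolvable transversal designs. With frames in hand, the decisive step is a ``filling in the holes'' construction: given a doubly resolvable Kirkman frame of type $g^u$, adjoin $w$ infinite points and fill each group-plus-infinite-points hole with a $KS_3(g+w)$, so that the frame resolutions and the fillers' resolutions assemble into the two global orthogonal resolutions of a $KS_3(n)$ with $n = gu + w$. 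A judicious choice of $g$, $u$, $w$ and fillers then covers all large admissible $n \equiv 3 \bmod 6$.

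I expect the main obstacle to be, as is typical for such exact existence theorems, the small and exceptional orders rather than the asymptotics. The large-$n$ conclusion follows fairly mechanically once enough frames and fillers are available, but determining the precise exception set — confirming that each listed order $\{9,15,21,141,\dots,357\}$ genuinely resists every available recursion while all other admissible $n$ succeed — demands a careful order-by-order analysis, a tailored supply of ingredient designs near the boundary, and for the stubborn orders either an ad hoc direct construction or a documented failure of all known methods. This bookkeeping, not any single clever idea, is where the real effort lies.
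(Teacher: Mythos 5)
The first thing to note is that the paper does not prove this statement at all: it is imported as a known result, with the proof attributed to the cited works on Kirkman squares (Colbourn, Lamken, Ling and Mills; Abel, Lamken and Wang), i.e.\ the existence theory of doubly resolvable $(v,3,1)$-BIBDs. So there is no in-paper argument to compare yours against; the only meaningful comparison is with that cited literature.

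Measured against that literature, your outline correctly identifies the machinery: the reformulation of a $KS_3(n)$ as a doubly resolvable Kirkman triple system (your count is right --- with $n=6m+3$ one gets $t=3m+1$ and each row and column must carry exactly $n/3$ blocks, hence a parallel class, with the two resolutions orthogonal), starter--adder direct constructions, Kirkman frames, Wilson-type inflation, and filling in holes. That is genuinely the skeleton of the published proof. But as a proof it has a decisive gap: everything that makes the theorem true is left undone. You exhibit no starter, no adder, no frame, no ingredient design; you never fix the parameters $g$, $u$, $w$ of the recursion or verify that the recursion actually closes over every admissible $n \equiv 3 \bmod 6$ outside the listed set; and the exception set cannot be ``derived'' from general principles --- it is precisely the residue of orders at which the accumulated direct constructions and computer searches happen to fail, so reproducing it requires reproducing those constructions order by order. (Note also that the theorem asserts only \emph{possible} exceptions: nothing needs to be proved about the listed orders except that the methods do not reach them.) For a theorem of this kind the entire mathematical content lies in exactly the bookkeeping your final paragraph defers, so what you have is a research plan consistent with the literature, not a proof of the statement.
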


Suppose $\are$ is a $KS_3(n)$. Then we can construct an $(n+\pc, 4)$-packing in which the 
largest PPC has  size $\pc$, where $\pc \le \frac{n}{3}$, as follows.
We first select $\pc$ rows of $\are$ such that the first cell in each of these rows is non-empty
(there are $\frac{n}{3}$ such rows). 
Suppose these rows are denoted $r_1, \dots, r_{\pc}$. For each non-empty cell 
in $r_i$, add a new point $s_i$ to form a block of size four. These $\frac{\pc n}{3}$ blocks form an $(n + \pc, 4)$-packing.
The blocks arising from  the cells in the first column of $\are$ form a PPC with size $\pc$. Since every block 
in the packing contains some $s_i$, this PPC is maximum. So we have the following result.

 \begin{Theorem}\label{t.KS}
Let $n$ be a positive integer, $n \equiv 3 \bmod 6$, and \[n \not\in \{ 9, 15,  21, 141, 153, 165, 
177,  189, 231, 249, 261, 285, 351, 357\}.\] Then there is an $(n + \pc, 4)$-packing with $\frac{\pc n}{3}$
blocks, in which the  largest PPC has size $\pc$. 
\end{Theorem}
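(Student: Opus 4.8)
The construction in the preceding paragraph already dictates the entire argument, so my plan is to turn that recipe into a clean verification. First I would invoke the Kirkman square existence theorem quoted above to guarantee, under the stated congruence and exception conditions on $n$, the existence of a $KS_3(n)$, say $\are$: a $t\times t$ array with $t = (n-1)/2$ on a point set $V$ with $|V| = n$. The structural fact I would record at the outset is that, because each point lies in exactly one cell of every row (and of every column) while each non-empty cell holds a $3$-subset, the non-empty cells of any single row (resp.\ column) partition $V$; since $n \equiv 3 \bmod 6$ forces $n \equiv 0 \bmod 3$, each row and each column is therefore a parallel class consisting of exactly $n/3$ blocks. In particular the first column is a parallel class, so precisely $n/3$ rows have a non-empty first cell, which is what makes the selection of $\pc$ such rows possible exactly when $\pc \le n/3$.

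Next I would perform the augmentation as prescribed: adjoin $\pc$ new points $s_1, \dots, s_{\pc}$, and for the $i$th selected row $r_i$ replace each of its $n/3$ blocks $B$ by $B \cup \{s_i\}$. This yields $\pc \cdot (n/3) = \pc n / 3$ blocks of size four on the $(n+\pc)$-point set $V \cup \{s_1, \dots, s_{\pc}\}$, matching the claimed block count.

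The substantive work is the verification, which I would split into the packing property and the maximum-PPC claim. For the packing property I would check three kinds of pairs. A pair inside $V$ occurs in a common new block only if its two points already shared an original block of the underlying $(n,3,1)$-BIBD, and by the BIBD property this happens for at most one block. A pair $\{s_i, x\}$ with $x \in V$ can occur only among the blocks of row $r_i$, and since that row is a parallel class, $x$ lies in exactly one of its blocks, so the pair occurs exactly once. A pair $\{s_i, s_j\}$ with $i \neq j$ never occurs, since each block carries exactly one new point. For the lower bound on the maximum PPC I would take the $\pc$ augmented first-column blocks: their $V$-parts are pairwise disjoint (the first column is a parallel class) and their new points $s_i$ are distinct, so together they form a PPC of size $\pc$. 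For the matching upper bound I would use that every block contains exactly one $s_i$ while only $\pc$ new points exist; two blocks sharing the same $s_i$ intersect, so any PPC uses each $s_i$ at most once and hence contains at most $\pc$ blocks.

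No single step is genuinely difficult here; the only place that demands care is the bookkeeping in the packing verification, namely keeping straight that the axiom ``each point appears in one cell of every row and column'' is exactly what both prevents the new pairs $\{s_i, x\}$ from repeating and forces each row to be a full parallel class of size $n/3$. Everything else follows immediately from the BIBD property and the parallel-class structure.
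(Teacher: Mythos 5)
Your proposal is correct and follows exactly the construction the paper uses to prove this theorem (select $\pc$ rows of a $KS_3(n)$ with non-empty first cells, adjoin a new point $s_i$ to every block of row $r_i$, use the first-column blocks as the PPC, and observe that every block contains some $s_i$). The only difference is that you spell out the packing and maximality verifications that the paper leaves implicit, which is a faithful elaboration rather than a different route.
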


We can improve this result slightly. If we adjoin the blocks of any $4$-packing on the points $s_1, \dots , s_{\pc}$, then the maximum size of a PPC is still $\pc$, because every block still contains at least one $s_i$.

 \begin{Theorem}\label{t+p.KS}
Let $n$ be a positive integer, $n \equiv 3 \bmod 6$,  \[n \not\in \{ 9, 15,  21, 141, 153, 165, 
177,  189, 231, 249, 261, 285, 351, 357\}.\] Then there is an $(n + \pc, 4)$-packing with $\frac{\pc n}{3} + D(\pc,4)$
blocks, in which the  largest PPC has size $\pc$. 
\end{Theorem}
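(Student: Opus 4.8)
The plan is to build directly on the construction behind Theorem~\ref{t.KS}. That construction already yields an $(n+\pc,4)$-packing $\STS_0 = (X, \B_0)$ on the point set $X = V \cup \{s_1, \dots, s_\pc\}$ with exactly $\frac{\pc n}{3}$ blocks and a maximum PPC of size $\pc$. My first step would be simply to record its two structural features that make everything work: (i) every block of $\B_0$ has the form $\{x,y,z,s_i\}$ with $x,y,z \in V$, so it contains \emph{exactly one} of the adjoined points $s_i$; and (ii) the $\pc$ blocks arising from the first-column cells of $\are$ in the selected rows form a PPC of size $\pc$ (these are pairwise disjoint because each column of a Kirkman square is a parallel class, and they carry distinct $s_i$).

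Next I would invoke the definition of the packing number to fix a $(\pc,4)$-packing $\mathcal{T} = (\{s_1,\dots,s_\pc\},\B_1)$ on the $\pc$ adjoined points having the maximum possible number of blocks, namely $D(\pc,4)$; such a packing exists by definition. I then set $\STS = (X, \B_0 \cup \B_1)$ and claim it is the desired packing. The block count is immediate: $|\B_0 \cup \B_1| = \frac{\pc n}{3} + D(\pc,4)$, since the union is disjoint — blocks of $\B_1$ lie inside $\{s_1,\dots,s_\pc\}$ whereas blocks of $\B_0$ meet $V$.

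The one genuine verification — and the step I would treat as the crux — is that $\STS$ is still a packing, i.e.\ no pair of points is covered twice. The argument rests entirely on feature (i): a pair covered by a block of $\B_0$ is either a pair contained in $V$ or a mixed pair $\{x,s_i\}$ with $x \in V$, whereas every pair covered by a block of $\B_1$ has the form $\{s_i,s_j\}$ with both points adjoined. These two classes of pairs are disjoint, so no pair can be shared between a $\B_0$-block and a $\B_1$-block; within $\B_0$ the packing property holds because $\STS_0$ is a packing, and within $\B_1$ it holds because $\mathcal{T}$ is a packing. Hence $\STS$ is a packing.

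Finally I would confirm the maximum PPC is still exactly $\pc$. Each block of $\B_0 \cup \B_1$ contains at least one point of $\{s_1,\dots,s_\pc\}$, so in any PPC the pairwise disjoint blocks use pairwise disjoint, nonempty subsets of this $\pc$-element set, forcing the PPC to have at most $\pc$ blocks. The PPC of size $\pc$ from feature (ii) survives unchanged in $\STS$, so the bound $\pc$ is attained. I expect no real obstacle beyond the packing check above; the entire improvement hinges on the observation that the adjoined packing introduces only pairs among the $s_i$, which the original blocks never used.
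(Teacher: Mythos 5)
Your proposal is correct and follows exactly the paper's approach: the paper proves Theorem~\ref{t+p.KS} by adjoining the blocks of a maximum $4$-packing on $\{s_1,\dots,s_\pc\}$ to the construction of Theorem~\ref{t.KS}, noting that every block still contains some $s_i$, so the maximum PPC size remains $\pc$. You simply spell out the routine verifications (that the union is still a packing and that the block counts add) which the paper leaves implicit.
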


The construction of  $(v, 4)$-packings in Theorems \ref{t.KS} and \ref{t+p.KS} requires that  $ v - \pc \equiv 3\bmod 6$. 
We now present a modification that relaxes this condition.   
 
  A \emph{transversal design}, denoted $TD(k,n)$, is a triple $(V, \mathcal G, \B)$, where $V$ is a set of $kn$ points, $\mathcal G$
is a partition of $V$ into $k$ \emph{groups}, each of size $n$, and $\B$ is a collection of $k$-subsets of $V$, called \emph{blocks},  such that every
pair of points from $V$ is contained either in one group or in one block, but not both.
A set of $k-2$ mutually orthogonal latin squares (MOLS) of order $n$ is equivalent to a TD$(k, n)$.
For information about MOLS, see \cite{MOLS}. 

A \emph{transversal} of a latin square $L$ of order $n$ is a set of $n$ cells, one from each row and each column of $L$, such that these $n$ cells contain $n$ different symbols. A \emph{common transversal} of a set of $k$ mutually orthogonal latin squares of order $n$ is a set of $n$ cells that is a transversal of each of the $k$ latin squares.

\begin{Theorem}\label{l.RTD}
Suppose there are two MOLS of order $n$ that have a common transversal, and suppose that  
$1 \leq \pc \le n$. Then there is  a $(3n+ \pc, 4)$-packing
with $n\pc + D(\pc,4)$ blocks,
in which the largest PPC has size $\pc$.
\end{Theorem}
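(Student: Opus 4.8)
The plan is to mimic the Kirkman-square construction of Theorems~\ref{t.KS} and~\ref{t+p.KS}, replacing the resolvable Kirkman triple system by a resolvable $TD(3,n)$ extracted from the two MOLS. Write the two MOLS as a $TD(4,n)$ with groups $R, C, S_1, S_2$ (rows, columns, and the symbol sets of the two squares $L_1, L_2$), whose blocks are the $n^2$ quadruples $(r, c, L_1(r,c), L_2(r,c))$. Deleting the group $S_2$ leaves a $TD(3,n)$ on the $3n$ points $R \cup C \cup S_1$ consisting of the triples $(r,c,L_1(r,c))$. The key observation is that this $TD(3,n)$ is resolvable: for each symbol $\ell \in S_2$, the cells $(r,c)$ with $L_2(r,c) = \ell$ have distinct rows, distinct columns and, by orthogonality, distinct $L_1$-values, so the corresponding triples form a parallel class $\Par_\ell$ on the $3n$ points. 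Thus the $n$ symbols of $L_2$ index a resolution into $n$ parallel classes, each of size $n$.

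Next I would carry out the augmentation. Introduce $\pc$ new points $s_1, \dots, s_{\pc}$ and select $\pc$ of the parallel classes, say those indexed by $\ell_1, \dots, \ell_{\pc}$; for each triple in $\Par_{\ell_i}$, adjoin $s_i$ to obtain a block of size four. This yields $n\pc$ blocks on $3n + \pc$ points. To see that these form a packing one checks pairs by type: two old points in different groups lie in at most one $TD$-triple and hence in at most one block, two old points in the same group never co-occur, a pair $\{s_i, x\}$ occurs exactly once since $x$ lies in exactly one triple of $\Par_{\ell_i}$, and a pair $\{s_i, s_j\}$ never occurs because each augmented block contains a single new point. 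Since every block contains exactly one of the $\pc$ new points and disjoint blocks must use distinct new points, every PPC has size at most $\pc$.

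It then remains to exhibit a PPC of size exactly $\pc$, and this is where the common transversal is essential. Let $T$ be the common transversal: $n$ cells, one per row and column, whose $L_1$-values are all distinct and whose $L_2$-values are all distinct. Because the $L_2$-values are distinct, $T$ meets each parallel class $\Par_\ell$ in exactly one cell; taking the $\pc$ cells of $T$ lying in $\Par_{\ell_1}, \dots, \Par_{\ell_{\pc}}$ and augmenting them gives $\pc$ blocks that are pairwise disjoint in their rows, columns, $S_1$-symbols (all distinct along $T$) and new points, hence a PPC of size $\pc$. Finally, as in Theorem~\ref{t+p.KS}, I would adjoin a $(\pc,4)$-packing with $D(\pc,4)$ blocks on the point set $\{s_1, \dots, s_{\pc}\}$; these blocks use only new points and every block of the enlarged family still contains at least one new point, so both the packing property and the bound ``max PPC $= \pc$'' are preserved, giving $n\pc + D(\pc,4)$ blocks in total.

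The routine parts are the pair-by-pair packing verification and the counting bound $\pc$ on PPC size; the one genuinely load-bearing step is recognizing that a \emph{common} transversal (rather than a transversal of a single square) is exactly what forces a full-size PPC: distinctness of the $L_2$-values distributes the $n$ transversal cells one per parallel class so that any $\pc$ selected classes are simultaneously hit, while distinctness of the rows, columns and $L_1$-values makes the selected blocks disjoint. I expect no real obstacle beyond making this correspondence between orthogonality, resolutions, and the common transversal precise.
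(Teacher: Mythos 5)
Your proof is correct and takes essentially the same approach as the paper: deleting the group $S_2$ and adjoining new points $s_1,\dots,s_{\pc}$ to the parallel classes indexed by $\ell_1,\dots,\ell_{\pc}$ is exactly a relabelled version of the paper's construction, which deletes $n-\pc$ points from the fourth group of the $TD(4,n)$ and retains the $n\pc$ blocks through the surviving points. The paper likewise obtains the PPC of size $\pc$ from the common transversal's surviving blocks and adjoins a $D(\pc,4)$-packing on the $\pc$ special points; your resolvability framing of the derived $TD(3,n)$ simply makes explicit the verifications the paper leaves implicit.
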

\begin{proof}
From the two MOLS of order $n$, we first construct the corresponding TD$( 4, n)$. Then we delete $n-\pc$ points from 
the fourth group of the transversal design and we delete all of the blocks containing these points. The remaining $n\pc$ blocks form a 
$(3n+\pc, 4)$-packing. Finally, we construct a packing of size $D(\pc,4)$ on the $\pc$ non-deleted points in the fourth group, thus obtaining a $(3n+\pc, 4)$-packing with $n\pc + D(\pc,4)$ blocks.

The non-deleted blocks corresponding to the transversal form a PPC of size $\pc$. Since each
block contains at least one point from the $\pc$ non-deleted points in   the fourth group of the $TD$, the maximum size of any PPC is $\pc$.
\end{proof}

A {\em self-orthogonal latin square of order $n$} (denoted SOLS$(n)$) is a latin square that is orthogonal to its transpose. It is easy to see that the main diagonal of a SOLS$(n)$ is a common  transversal of this set of two MOLS$(n)$.

\begin{Theorem}\cite{SOLS}\label{t.sols}
A self-orthogonal latin square of order $n$ exists for all positive integers $n \neq 2, 3 $ or $6$.
\end{Theorem}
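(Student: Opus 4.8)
The plan is to separate the nonexistence of the three exceptional orders from the existence for all other $n$, and within the existence part to combine one explicit algebraic family with recursive constructions. The nonexistence of a SOLS of order $2$ or $3$ can be dispatched by a short exhaustive argument: up to relabeling a Latin square of such small order is essentially the cyclic one, and one checks directly that it fails to be orthogonal to its transpose. For order $6$ I would argue indirectly, since a SOLS$(6)$ together with its transpose would be a pair of orthogonal Latin squares of order $6$, contradicting the classical nonexistence of two MOLS$(6)$ (Euler's officer problem).

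For the existence direction, the first step is a direct construction covering all prime powers $q \geq 4$. Over the field $GF(q)$, index rows, columns, and symbols by the field elements and set $L(i,j) = a\,i + b\,j$. This is a Latin square precisely when $a$ and $b$ are nonzero, and a short computation shows that $L$ is orthogonal to its transpose $L^{T}(i,j) = a\,j + b\,i$ exactly when the determinant $a^{2} - b^{2} = (a-b)(a+b)$ is nonzero. Hence I must choose nonzero $a,b$ with $a \neq \pm b$: for odd $q \geq 5$ one simply avoids the three forbidden values, and in characteristic $2$ one only needs $a \neq b$, which is possible once $q \geq 4$. The same linear recipe over $\zed_n$ yields a SOLS$(n)$ for every $n$ coprime to $6$, taking $a = 2$ and $b = 1$ so that the relevant quantities $a=2$, $b=1$, $a-b=1$, $a+b=3$ are all units modulo $n$. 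Notably this construction already explains why orders $2$ and $3$ fail.

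The second step is the product (Kronecker) construction: from a SOLS$(m)$ on a set $M$ and a SOLS$(n)$ on a set $N$ one builds a SOLS$(mn)$ on $M \times N$ by the coordinatewise product $C\big((x_{1},y_{1}),(x_{2},y_{2})\big) = \big(A(x_{1},x_{2}), B(y_{1},y_{2})\big)$, which is self-orthogonal because transposition acts independently in each coordinate. Thus the set of admissible orders is closed under multiplication and already contains every product of prime powers that are at least $4$.

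The remaining orders form the genuine obstacle, and this is where I expect the real work to lie. These are the $n$ not expressible as such a product, for instance $10, 12, 14, 15, 18, 21, 24, \dots$, in which the factors $2$ and $3$ cannot be absorbed into prime-power blocks of size at least $4$. To reach them I would introduce incomplete self-orthogonal Latin squares, that is, a SOLS with an empty $k \times k$ subarray forming a hole, build such objects from transversal designs or MOLS possessing suitable holes, and then fill each hole with a smaller SOLS to obtain a SOLS of the target order, in the style of a Wilson/PBD recursion. The hard part will be engineering the spectrum of ingredient incomplete squares so that the recursion reaches every admissible residue class, and supplying explicit (possibly computer-aided) constructions for the finitely many stubborn base orders such as $10$, $12$, $14$, and $15$ that seed the recursion.
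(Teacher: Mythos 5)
This statement is not proven in the paper at all: Theorem \ref{t.sols} is quoted from the literature (the Brayton--Coppersmith--Hoffman paper cited as \cite{SOLS}), so there is no internal proof to compare against, and any proposal here is really an attempt to reprove that classical theorem from scratch.

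Your outline is correct as far as it goes, but it has a genuine gap exactly where you admit ``the real work'' lies, and that gap is the entire content of the cited theorem. Your nonexistence arguments for $n=2,3,6$ are fine (the reduction of $n=6$ to the nonexistence of two MOLS of order $6$ is valid), the linear construction $L(i,j)=ai+bj$ over $GF(q)$ with $a,b,a-b,a+b$ all nonzero correctly handles prime powers $q\geq 4$ and, over $\zed_n$, all $n$ coprime to $6$, and the Kronecker product does preserve self-orthogonality since $(A\otimes B)^{T}=A^{T}\otimes B^{T}$. However, the multiplicative closure of $\{$prime powers $\geq 4\}\cup\{n:\gcd(n,6)=1\}$ misses an infinite set of orders: every $n\equiv 2 \bmod 4$ (e.g.\ $10,14,18,22,26,\dots$) and every $n$ divisible by $3$ but not $9$ (e.g.\ $12,15,21,24,\dots$), since the prime factors $2$ and $3$ cannot then be packaged into prime-power blocks of size at least $4$. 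Note also that the linear recipe over $\zed_n$ provably cannot be repaired when $3\mid n$: reducing modulo $3$, one of $a,b,a-b,a+b$ must vanish. For all of these orders you offer only a program --- incomplete SOLS with holes, PBD/Wilson-style recursion, computer-aided base cases --- without constructing any of the required ingredient squares, establishing the spectrum of incomplete SOLS, or verifying the seed orders such as $10$, $12$, $14$, $15$. Since handling precisely these residues is what made the original theorem difficult (and is why it merited a dedicated paper), the proposal as written does not constitute a proof; it reduces the theorem to an unproven claim that the recursion can be made to work.
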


From Theorem \ref{l.RTD}  and Theorem \ref{t.sols}, we have the following.

\begin{Theorem}\label{t.con}
If  $n \ge 4, n \neq 6$, then
 for $1 \leq \pc \leq n$,   there is a $(3n + \pc, 4)$-packing with $n\pc + D(\pc,4)$
blocks, in which the size of largest PPC has size $\pc$. 
\end{Theorem}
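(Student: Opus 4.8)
The plan is to obtain Theorem \ref{t.con} as an immediate corollary of Theorem \ref{l.RTD} by supplying, for each admissible $n$, a pair of MOLS of order $n$ possessing a common transversal. The natural source of such a pair is a self-orthogonal latin square, so the whole argument reduces to checking that a SOLS$(n)$ does the job and that its range of existence matches the hypothesis $n \ge 4$, $n \neq 6$.

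First I would invoke Theorem \ref{t.sols}: a SOLS$(n)$ exists for every positive integer $n \notin \{2,3,6\}$. Since the hypothesis requires $n \ge 4$, the only excluded value in this range is $n = 6$, so a SOLS$(n)$ is guaranteed for every $n$ covered by the statement. Fix such a square $L$.

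Next I would exhibit the required pair of MOLS together with a common transversal. By definition $L$ is orthogonal to its transpose $L^T$, so $\{L, L^T\}$ is a set of two MOLS of order $n$. The key observation is that the main diagonal is a common transversal: orthogonality of $L$ and $L^T$ means the ordered pairs $(L_{ij}, L_{ji})$ are all distinct, and restricting to the diagonal cells $(i,i)$ forces the symbols $L_{ii}$ to be distinct. Hence the $n$ diagonal cells form a transversal of $L$, and since $L^T_{ii} = L_{ii}$ they form a transversal of $L^T$ as well, i.e.\ a common transversal.

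With the pair $\{L, L^T\}$ and its common transversal in hand, I would apply Theorem \ref{l.RTD} directly for each $\pc$ with $1 \le \pc \le n$, yielding a $(3n + \pc, 4)$-packing with $n\pc + D(\pc, 4)$ blocks whose largest PPC has size $\pc$. There is essentially no obstacle here; the only point requiring care is the bookkeeping that the existence range of the SOLS in Theorem \ref{t.sols} is compatible with, and in fact slightly larger than, the claimed range $n \ge 4$, $n \neq 6$, so that every case of the theorem is indeed covered.
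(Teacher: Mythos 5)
Your proposal is correct and follows exactly the paper's route: Theorem \ref{t.con} is obtained by combining Theorem \ref{t.sols} (existence of a SOLS$(n)$ for $n \ge 4$, $n \neq 6$) with the observation that the main diagonal of a SOLS is a common transversal of $\{L, L^T\}$, and then applying Theorem \ref{l.RTD}. Your explicit verification that the diagonal entries are distinct is a nice touch the paper leaves as ``easy to see,'' but the argument is the same.
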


\begin{Remark}
{\rm 
Theorems \ref{t+p.KS} and \ref{t.con} yield similar results. They both produce packings of size roughly equal to 
\[ \frac{\pc(v-\pc)}{3} + \frac{\pc(\pc - 1)}{12},\] for appropriate values of $v$.\hfill$\blacksquare$
}
\end{Remark}

Theorem \ref{t.con} yields a $(v,4)$-packing with a PPC of size $\pc$, in which $v - \pc \equiv 0\bmod 3$.
We now describe some variations where $v - \pc \equiv 1,2\bmod 3$.

Suppose we use Theorem \ref{t.con} to construct a $(3n + \pc, 4)$-packing with $n\pc + D(\pc,4)$
blocks, in which the size of largest PPC has size $\pc$. Then delete a point $x$ that is not in a block of the PPC of size $\pc$ (this requires $\pc\leq n-1$), along with the $\pc$ blocks that contain $x$. We obtain the following.

\begin{Theorem}\label{t2.con}
If  $n \ge 4, n \neq 6$, then
 for $1 \leq \pc \leq n-1$,   there is a $(3n-1 + \pc, 4)$-packing with $(n-1)\pc + D(\pc,4)$
blocks, in which the size of largest PPC has size $\pc$. 
\end{Theorem}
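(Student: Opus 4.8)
The plan is to realize the one-point deletion sketched immediately before the statement and to pin down the two quantitative claims it hides, namely that a suitable point lies outside the PPC and that it meets exactly $\pc$ blocks. I would start from the $(3n+\pc,4)$-packing $\STS=(X,\B)$ furnished by Theorem~\ref{t.con}, together with the distinguished PPC $\Par\subseteq\B$ of size $\pc$ that the construction produces (the non-deleted transversal blocks). Write $Y\subseteq X$ for the $4\pc$ points covered by $\Par$. The goal is to choose a point $x\in X\setminus Y$, delete it together with the blocks through it, and then check that the outcome has the claimed parameters.

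First I would verify that such an $x$ exists and determine its replication number. Recalling the construction of Theorem~\ref{l.RTD}, the packing lives on three ``full'' groups of size $n$ (coming from the $TD(4,n)$) and one group reduced to $\pc$ points, and every surviving point of the reduced group already lies in $\Par$. Hence $X\setminus Y$ consists entirely of points from the three full groups, and a direct count gives $|X\setminus Y| = 3(n-\pc)$, which is positive precisely because $\pc\le n-1$; so $x$ can be chosen. The crucial point — and the only step that is not purely formal — is that any point $p$ of a full group lies in exactly $\pc$ blocks of $\STS$. In the original $TD(4,n)$ such a $p$ lies in exactly $n$ blocks, and these blocks meet the fourth group in all $n$ of its points, each once; deleting $n-\pc$ points of the fourth group therefore removes exactly $n-\pc$ of the blocks through $p$, leaving $\pc$, and the adjoined $D(\pc,4)$-packing (supported on the fourth group) adds none. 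Thus $x$ meets exactly $\pc$ blocks.

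Finally I would carry out the deletion. Removing $x$ and the $\pc$ blocks containing it yields a structure $\STS'$ on $3n+\pc-1=3n-1+\pc$ points with $(n\pc+D(\pc,4))-\pc=(n-1)\pc+D(\pc,4)$ blocks; since any sub-collection of the blocks of a packing is again a packing, $\STS'$ is a $(3n-1+\pc,4)$-packing. For the PPC size, note that the block set of $\STS'$ is contained in $\B$, so every PPC of $\STS'$ is a PPC of $\STS$ and thus has size at most $\pc$; and because $x\notin Y$, no block of $\Par$ was deleted, so $\Par$ still witnesses a PPC of size $\pc$ in $\STS'$. Hence the largest PPC of $\STS'$ has size exactly $\pc$, completing the argument. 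I expect the degree computation of the previous paragraph to be the only genuine obstacle, as it is what forces the block count to drop by exactly $\pc$.
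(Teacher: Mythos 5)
Your proposal is correct and follows the paper's own route exactly: the paper proves Theorem~\ref{t2.con} via the paragraph preceding it, deleting from the packing of Theorem~\ref{t.con} a point $x$ outside the PPC together with the $\pc$ blocks through $x$. Your added verifications --- that $X\setminus Y$ has $3(n-\pc)>0$ points when $\pc\le n-1$, and that each such point lies in exactly $\pc$ blocks because its $n$ blocks in the $TD(4,n)$ meet the fourth group in distinct points --- are precisely the details the paper leaves implicit, and they are accurate.
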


In Theorem \ref{t2.con}, we construct a $(v,4)$-packing with a PPC of size $\pc$, in which $v - \pc \equiv 2\bmod 3$. We now handle the last case, where $v - \pc \equiv 1\bmod 3$.
Again, we use Theorem \ref{t.con} to construct a $(3n + \pc, 4)$-packing with $n\pc + D(\pc,4)$
blocks, in which the size of largest PPC has size $\pc$. We carry out the following modifications:
\begin{enumerate}
\item Delete the $D(\pc,4)$ blocks on the $\pc$ points in the last group.
\item Adjoin the blocks of a  $4$-packing consisting of $D(\pc +1,4)$ blocks constructed on the the $\pc$ points in the last group and one new point.
\end{enumerate}

\begin{Theorem}\label{t1.con}
If  $n \ge 4, n \neq 6$, then
 for $1 \leq \pc \leq n$,   there is a $(3n+1 + \pc, 4)$-packing with $n\pc + D(\pc+1,4)$
blocks, in which the size of largest PPC has size $\pc$. 
\end{Theorem}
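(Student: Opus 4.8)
The plan is to start from the $(3n+\pc,4)$-packing guaranteed by Theorem~\ref{t.con}, which has $n\pc + D(\pc,4)$ blocks and a maximum PPC of size exactly $\pc$. Recall its structure: it arises from a $TD(4,n)$ by restricting the fourth group to $\pc$ points, so the fourth group consists precisely of the $\pc$ points, each block of the transversal design uses one of them, and the extra $D(\pc,4)$ blocks sit entirely inside those $\pc$ points. The PPC of size $\pc$ is formed by the blocks corresponding to the common transversal, one through each of the $\pc$ points of the fourth group. The goal is to modify this on exactly the fourth group so as to add one point (raising $v$ by $1$ and pushing $v-\pc$ into the residue class $1\bmod 3$) while keeping the maximum PPC at size $\pc$.

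I would carry out the two modifications exactly as the paragraph preceding the statement prescribes. First, delete the $D(\pc,4)$ blocks that live on the $\pc$ points of the last group; this leaves the $n\pc$ ``transversal'' blocks, each still meeting the fourth group in a single point. Second, introduce one new point, call it $x$, so the fourth group now has $\pc+1$ points, and adjoin a maximum $4$-packing on these $\pc+1$ points, which by definition has $D(\pc+1,4)$ blocks. The resulting incidence structure has $3n+\pc+1 = 3n+1+\pc$ points and $n\pc + D(\pc+1,4)$ blocks, matching the claimed totals.

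Two things then need checking. The first is that the structure is still a valid $(3n+1+\pc,4)$-packing, i.e.\ no pair of points lies in two blocks. The only pairs that could be endangered are pairs within the fourth group, since that is where all the changes occurred. But the $n\pc$ surviving transversal blocks each contain exactly one point of the fourth group, so they create no pairs inside it; and the adjoined blocks form a $4$-packing on the $\pc+1$ points by construction, so internally they are fine. Hence the packing property is preserved. The second, and genuinely the crux, is the bound on the maximum PPC. Every one of the $n\pc$ transversal blocks still contains one of the original $\pc$ points of the fourth group, and the adjoined blocks lie inside the $\pc+1$ points of the fourth group. I would argue that any PPC can use at most $\pc+1$ blocks in total (one per fourth-group point), but more carefully that it cannot reach $\pc+1$: a PPC of size $\pc+1$ would have to cover all $\pc+1$ fourth-group points disjointly, forcing it to use blocks meeting $x$, which only exist among the adjoined blocks, and then to cover the remaining $\pc$ points with disjoint transversal-type blocks in a way consistent with disjointness across the first three groups.

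The main obstacle, therefore, is pinning down why no PPC of size $\pc+1$ exists. The cleanest route is to show that any block touching the new point $x$ lies wholly within the $\pc+1$ points of the fourth group, so such a block occupies none of the first three groups; meanwhile each transversal block occupies exactly one point in each of the four groups. A PPC of size $\pc+1$ must be disjoint and hence can include at most one block through $x$; if it includes none, all its blocks are transversal blocks, each using a distinct fourth-group point among the original $\pc$, capping the size at $\pc$. If it includes exactly one block $B \ni x$ (necessarily an adjoined block inside the fourth group, using up to four of the $\pc+1$ fourth-group points), then the remaining blocks must be disjoint from $B$ and from each other; the transversal blocks among them each consume a distinct fourth-group point not used by $B$, and since only $\pc+1$ fourth-group points exist and $B$ already consumes at least one of them together with $x$, a short counting argument on fourth-group occupancy shows the total cannot exceed $\pc$. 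I would formalize this by observing that the fourth group has $\pc+1$ points, every block in the whole packing meets the fourth group, and a disjoint family of blocks partitions the fourth-group points it covers; matching this against the $3n+1+\pc$ total then forces size at most $\pc$, which combined with the explicit PPC of size $\pc$ inherited from the transversal completes the argument.
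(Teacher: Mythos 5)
Your construction is exactly the paper's: the paper gives no separate proof of this theorem beyond the two-step modification described in the paragraph preceding it (delete the $D(\pc,4)$ blocks on the last group, adjoin a maximum packing of $D(\pc+1,4)$ blocks on those $\pc$ points plus one new point), and you reproduce that verbatim. Where you go beyond the paper is in actually verifying that the maximum PPC stays at $\pc$ --- a point the paper leaves implicit, and which genuinely needs an argument, since the naive observation ``every block meets the last group'' only bounds the PPC by $\pc+1$ once the new point is added.

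One step of your case analysis is misstated, though your closing remark repairs it. You claim a PPC containing no block through $x$ must consist entirely of transversal blocks. That is false once $\pc \geq 7$: the adjoined $4$-packing on $\pc+1$ points can contain blocks avoiding $x$, and such a block could sit in a PPC with no block through $x$ at all. The correct dichotomy is not ``through $x$ or transversal'' but ``adjoined or transversal,'' and the occupancy count you sketch at the end settles every case at once: if a disjoint family contains $s$ transversal blocks and $t$ adjoined blocks, the transversal blocks consume $s$ distinct points of the original $\pc$ fourth-group points and the adjoined blocks consume $4t$ distinct points of the $\pc+1$ fourth-group points, so $s + 4t \leq \pc + 1$; hence $s + t \leq \pc + 1 - 3t$, which is at most $\pc - 2$ whenever $t \geq 1$, and at most $\pc$ when $t = 0$. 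With the inherited transversal PPC of size $\pc$, this completes the proof. So the proposal is correct in substance and identical in approach to the paper, modulo this one fixable slip in the intermediate casework.
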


\section{Upper bounds}

In this section, we prove three upper bounds on $\beta(\pc,v,4)$ (i.e., the maximum number of blocks in a $(v, 4)$-packing with largest PPC of size $\pc$).

We begin by defining some notation.
Suppose $\STS = (X, \B)$ is a $(v,4)$-packing with $b$ blocks, in which the 
maximum partial parallel class has size  $\pc$; hence $v \geq 4\pc$.
Let $\Par = \{B_1, \dots , B_\pc\}$ be a set of $\pc$ disjoint blocks 
and let $P = \bigcup_{i=1}^{\pc}  B_i$. Let $T = X \setminus P$. 
Thus, $|P| = 4 \pc$ and $|T| = v - 4\pc$.

For $x\in P$ and for $0 \leq i \leq 3$, let $\mathcal{T}_x^i$ denote the set of blocks not in $\Par$
that contain the point $x$ and exactly $i$ points in $T$.   Define $t_x^i = |\mathcal{T}_x^i|$. 

\subsection{The First Bound}

Our first bound uses the method described in \cite{stinson}. We observe that, for any two points $x, y \in B_i \in \Par$, a block  $B \in \mathcal{T}_x^3$ and a block  $B' \in \mathcal{T}_y^3$ cannot be disjoint. Otherwise, we can delete $B_i$ and add the two blocks $B$ and $B'$ to the PPC to get a new PPC of size $\pc + 1$.  

The 
following lemma is a  straightforward consequence of this observation.

\begin{Lemma}
\label{l-t_w}
Suppose $B_i  = \{w,x,y,z\} \in \Par$ and $t_w^3 \geq \max \{  t_x^3,t_y^3,t_z^3\}$. Then  one of  the following two conditions holds:
\begin{enumerate}
\item $t_w^3 \leq 3$, or
\item $t_w^3 \geq 4$ and  $t_x^3 = t_y^3 = t_z^3 = 0$.
\end{enumerate}
\end{Lemma}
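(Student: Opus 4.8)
The plan is to prove the stated dichotomy by observing that $t_w^3$ is either $\le 3$ (which is condition~1) or $\ge 4$, so it suffices to show that whenever $t_w^3 \ge 4$ we are forced into $t_x^3 = t_y^3 = t_z^3 = 0$. First I would record a structural fact about the blocks of $\mathcal{T}_w^3$: by definition each such block consists of $w$ together with a $3$-subset of $T$, and any two of them can meet only in $w$, since a shared point of $T$ would place a pair $\{w, \cdot\}$ in two distinct blocks, violating the packing condition. Hence the $T$-parts of the blocks in $\mathcal{T}_w^3$ form a family of pairwise disjoint $3$-subsets of $T$; when $t_w^3 \ge 4$ there are at least four such disjoint triples.

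Next I would argue by contradiction. Suppose one of $t_x^3, t_y^3, t_z^3$ is positive, say $t_x^3 \ge 1$, and fix a block $B' \in \mathcal{T}_x^3$, writing its $T$-part as a $3$-subset $\{d,e,f\} \subseteq T$. By the observation preceding the lemma, since $w$ and $x$ both lie in $B_i \in \Par$, no block of $\mathcal{T}_w^3$ can be disjoint from $B'$. I would then pin down where such an intersection can occur: a block $B \in \mathcal{T}_w^3$ meets $P$ only in $w$ and $B'$ meets $P$ only in $x$, with $w \neq x$, so $B \cap B'$ must lie entirely inside $T$, i.e.\ the $T$-part of $B$ must intersect $\{d,e,f\}$.

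The crux is then a short counting step. Each of the (at least four) blocks in $\mathcal{T}_w^3$ has a $T$-part meeting the fixed $3$-element set $\{d,e,f\}$, yet these $T$-parts are pairwise disjoint. Their traces on $\{d,e,f\}$ are therefore pairwise disjoint nonempty subsets of a $3$-element set, of which there can be at most three, contradicting the existence of four or more of them. This forces $t_x^3 = 0$, and the identical argument applied to $y$ and $z$ gives $t_y^3 = t_z^3 = 0$, establishing condition~2. I do not expect any genuine obstacle here; the only point requiring care is the bookkeeping in the middle step that confines $B \cap B'$ to $T$, since that is exactly what converts the ``cannot be disjoint'' observation into a statement purely about triples in $T$ and makes the final pigeonhole bite.
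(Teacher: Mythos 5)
Your proposal is correct and follows essentially the same route as the paper: invoke the observation that a block in $\mathcal{T}_w^3$ and a block in $\mathcal{T}_x^3$ cannot be disjoint (else swapping them for $B_i$ enlarges the PPC), and then show that $t_w^3 \geq 4$ forces such a disjoint pair to exist. The paper simply asserts the existence of the disjoint block $B' \in \mathcal{T}_w^3$, whereas you spell out the supporting details it leaves implicit — the pairwise disjointness of the $T$-parts of blocks through $w$, the confinement of any intersection to $T$, and the pigeonhole on the $3$-set — all of which are accurate.
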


\begin{proof}
Suppose $t_w^3 \geq 4$ and $B \in \mathcal{T}_x^3 \cup  \mathcal{T}_y^3 \cup \mathcal{T}_z^3 $. Then there is a block $B' \in \mathcal{T}_w^3$ such that
$B \cap B' = \emptyset$. This contradicts the observation above. 
Hence $t_x^3 = t_y^3 = t_z^3 = 0$ if  $t_w^3 \geq 4$. 
\end{proof}

 Since $\Par$ contains $\pc$ blocks, we have the following bound.
\begin{Theorem}\label{t.bound1}
\[ \beta (\pc,v,4) \leq 
\pc \left( (8\pc -7) + \max \left\{ 12, \left\lfloor \frac{v-4\pc}{3} \right\rfloor \right\}\right).
\]
\end{Theorem}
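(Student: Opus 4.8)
The plan is to partition the non-$\Par$ blocks according to how many of their points lie in $P$, and to bound each class separately. First I would observe that the maximality of $\Par$ forces every block of $\STS$ to meet $P$: a block $B$ with $B \subseteq T$ is disjoint from all of $B_1, \dots, B_\pc$, so $\Par \cup \{B\}$ would be a PPC of size $\pc + 1$, a contradiction. Hence each block not in $\Par$ has between one and four points in $P$, and I write $b = \pc + n_1 + n_2 + n_3 + n_4$, where $n_j$ counts the non-$\Par$ blocks with exactly $j$ points in $P$. Note that a block with one point $x \in P$ has three points in $T$, so it is counted by $t_x^3$; hence $n_1 = \sum_{x \in P} t_x^3$, with no double counting since such a block has a unique point in $P$.

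Next I would bound $n_1$ using Lemma~\ref{l-t_w}. For each block $B_i = \{w,x,y,z\} \in \Par$, set $s_i = t_w^3 + t_x^3 + t_y^3 + t_z^3$, the number of blocks counted by $n_1$ whose unique $P$-point lies in $B_i$; then $n_1 = \sum_{i=1}^{\pc} s_i$. Lemma~\ref{l-t_w} gives, for each $i$, either that all four $t^3$-values are at most $3$ (so $s_i \leq 12$), or that one value dominates while the other three vanish (so $s_i = t_w^3$). In the latter case I would invoke the packing property directly: the $t_w^3$ blocks counted at $w$ pairwise intersect only in $w$, so their $T$-parts are disjoint triples inside $T$, giving $3 t_w^3 \leq |T| = v - 4\pc$, that is $t_w^3 \leq \lfloor (v-4\pc)/3\rfloor$. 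Either way $s_i \leq \max\{12, \lfloor (v-4\pc)/3 \rfloor\}$, and summing over the $\pc$ blocks yields $n_1 \leq \pc \max\{12, \lfloor(v-4\pc)/3\rfloor\}$.

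Then I would bound $n_2 + n_3 + n_4$ by counting pairs inside $P$. Every pair of points of $P$ occurs in at most one block, so the pairs contributed by distinct blocks are disjoint, and a block with $j$ points in $P$ contributes $\binom{j}{2}$ such pairs. Counting the $\pc$ blocks of $\Par$ (six $P$-pairs each) together with the blocks counted by $n_2, n_3, n_4$ gives $6\pc + n_2 + 3 n_3 + 6 n_4 \leq \binom{4\pc}{2} = 8\pc^2 - 2\pc$. Since $n_2 + n_3 + n_4 \leq n_2 + 3 n_3 + 6 n_4$, this yields $n_2 + n_3 + n_4 \leq 8\pc^2 - 8\pc$. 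Adding the three estimates, $b \leq \pc + \pc \max\{12, \lfloor(v-4\pc)/3\rfloor\} + 8\pc^2 - 8\pc = \pc\left((8\pc - 7) + \max\{12, \lfloor(v-4\pc)/3\rfloor\}\right)$, as required.

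The main obstacle is the middle step. Lemma~\ref{l-t_w} by itself only controls the $n_1$-blocks attached to a single $B_i$, and its second case leaves $t_w^3$ a priori unbounded; the crux is recognizing that the packing condition \emph{independently} caps $t_w^3$ by $\lfloor(v-4\pc)/3\rfloor$, and that this is precisely the quantity appearing in the $\max$. By contrast the pair-counting step is routine once one notices the inequality $n_2 + n_3 + n_4 \leq n_2 + 3n_3 + 6n_4$, which collapses the weighted pair bound into a bound on the plain block count.
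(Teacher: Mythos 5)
Your proof is correct and follows essentially the same route as the paper's: every block meets $P$ by maximality, Lemma~\ref{l-t_w} bounds the blocks with one point in $P$ by $\pc \max\left\{12, \left\lfloor \frac{v-4\pc}{3}\right\rfloor\right\}$, and counting the $\binom{4\pc}{2} - 6\pc = 8\pc(\pc-1)$ pairs of $P$ not covered by $\Par$ bounds the blocks with at least two points in $P$. The only difference is that you spell out two steps the paper leaves implicit --- that in the second case of Lemma~\ref{l-t_w} the disjoint $T$-triples force $t_w^3 \leq \left\lfloor \frac{v-4\pc}{3}\right\rfloor$, and the weighted pair count $n_2 + 3n_3 + 6n_4$ --- which is a welcome clarification, not a departure.
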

\begin{proof}
Since $\Par$ is maximum, there is no block contained in $T$. 
From Lemma \ref{l-t_w}, 
there are at most $\pc \times \max \left\{ 12, \left\lfloor \frac{v-4\pc}{3} \right\rfloor \right\}$
blocks having one point in $P$ and three points in $T$. 

The number of pairs of points in $P$ that are not contained in a block of $\Par$ is 
\[ \binom{4\pc}{2}  - 6\pc = 8\pc (\pc -1).\]  Therefore there are at most $8\pc (\pc -1)$ 
blocks not in $\Par$ that contain at least two points in $P$. Finally, there are $\pc$ blocks in $\Par$.

Therefore, in total,  there are at most 
\[8\pc (\pc -1) + \pc  \times \max \left\{ 12, \left\lfloor \frac{v-4\pc}{3} \right\rfloor \right\} + \pc\] blocks in the packing.
\end{proof}

\begin{Corollary}
\label{cor-bound}
For $v \geq 4 \pc + 36$, it holds that 
\[ \beta (\pc,v,4) \leq 
\frac{\pc v}{3} + \frac{20 \pc^2}{3}- 7 \pc.
\]
\end{Corollary}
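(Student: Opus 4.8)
The plan is to derive Corollary \ref{cor-bound} as a direct specialization of Theorem \ref{t.bound1}. The statement to be proved is that for $v \geq 4\pc + 36$, we have $\beta(\pc, v, 4) \leq \frac{\pc v}{3} + \frac{20\pc^2}{3} - 7\pc$. The entire argument rests on resolving the maximum appearing in Theorem \ref{t.bound1}, namely $\max\left\{12, \left\lfloor \frac{v-4\pc}{3}\right\rfloor\right\}$, and then simplifying the resulting expression.

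First I would observe that the hypothesis $v \geq 4\pc + 36$ is exactly the condition needed to make the floor term dominate the constant $12$. Indeed, $v - 4\pc \geq 36$ gives $\left\lfloor \frac{v-4\pc}{3}\right\rfloor \geq 12$, so the maximum in Theorem \ref{t.bound1} equals $\left\lfloor \frac{v-4\pc}{3}\right\rfloor$. I would then drop the floor by bounding $\left\lfloor \frac{v-4\pc}{3}\right\rfloor \leq \frac{v-4\pc}{3}$, which is valid since the floor of a real number never exceeds the number itself.

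Next I would substitute this bound into the inequality from Theorem \ref{t.bound1}, obtaining
\[
\beta(\pc, v, 4) \leq 8\pc(\pc-1) + \pc \cdot \frac{v-4\pc}{3} + \pc.
\]
The remaining step is purely algebraic: expand and collect terms. The term $\pc \cdot \frac{v - 4\pc}{3}$ contributes $\frac{\pc v}{3} - \frac{4\pc^2}{3}$, and $8\pc(\pc-1) = 8\pc^2 - 8\pc$. Combining the $\pc^2$ coefficients gives $8 - \frac43 = \frac{20}{3}$, and combining the linear terms gives $-8\pc + \pc = -7\pc$. This yields exactly $\frac{\pc v}{3} + \frac{20\pc^2}{3} - 7\pc$, matching the claimed bound.

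I do not anticipate any genuine obstacle here, since this is a routine simplification of an already-established inequality; the only point requiring care is verifying that the threshold $v \geq 4\pc + 36$ is precisely what forces the floor term to win over the constant $12$, so that the clean closed-form expression is valid across the entire stated range.
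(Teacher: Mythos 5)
Your proposal is correct and follows essentially the same route as the paper: both resolve the maximum in Theorem \ref{t.bound1} using the observation that $v \geq 4\pc + 36$ is equivalent to $\frac{v-4\pc}{3} \geq 12$, replace the floor by $\frac{v-4\pc}{3}$, and simplify algebraically to the stated bound. Your write-up is in fact slightly more careful than the paper's, since you make the step of dropping the floor explicit.
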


\begin{proof} 
We have that  
\[ \frac{v-4\pc}{3} \geq 12\]
if and only if $v \geq 4 \pc + 36$. Thus, when $v \geq 4 \pc + 36$, Theorem \ref{t.bound1} yields
\[ \beta (\pc,v,4) \leq 
\pc \left( (8\pc -7) + \frac{v-4\pc}{3} \right) =  \frac{\pc v}{3} + \frac{20 \pc^2}{3}- 7 \pc.
\]
\end{proof}

\subsection{The Second Bound}

We now prove a somewhat better bound by using a more precise counting argument.
The next two lemmas are straightforward.

\begin{Lemma}
For any $x \in P$, the following two inequalities hold:
\begin{equation}
\label{eq1}
3t_x^3 + 2t_x^2 + t_x^1 \leq v - 4 \pc.
\end{equation}
and
\begin{equation}
\label{eq2}
t_x^2 + 2t_x^1 + 3t_x^0 \leq 4 (\pc - 1).
\end{equation}
\end{Lemma}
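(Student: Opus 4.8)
The plan is to prove each inequality by a direct double-counting argument, fixing the point $x \in P$ and examining how the blocks through $x$ distribute their remaining three points between $T$ and $P \setminus \{x\}$. Both inequalities should follow from counting points in a bounded resource (the points of $T$, or the points of $P$ lying in blocks of $\Par$ other than the one containing $x$) and observing that the packing condition forces these counts to be disjoint.

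**For the first inequality** \eqref{eq1}, I would count incidences between the point set $T$ and the blocks through $x$ that are not in $\Par$. By definition, a block in $\mathcal{T}_x^i$ contributes exactly $i$ points of $T$. So the total number of (point-of-$T$, block-through-$x$) incidences arising from these blocks is $3t_x^3 + 2t_x^2 + t_x^1$. The key observation is that each point $t \in T$ can be incident with at most one such block, because two distinct blocks through $x$ share only the point $x$ (otherwise they would share the pair $\{x,t\}$, violating the packing property). Hence the number of such incidences is at most $|T| = v - 4\pc$, which is exactly \eqref{eq1}.

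**For the second inequality** \eqref{eq2}, I would run the analogous count against the points of $P \setminus B_i$, where $B_i \in \Par$ is the block containing $x$. A block in $\mathcal{T}_x^i$ contains $x$ and $i$ points of $T$, hence $3-i$ points in $P \setminus \{x\}$. Since any block not in $\Par$ meets each block $B_j \in \Par$ in at most one point (again by the packing condition, as sharing two points of a single $B_j$ would repeat a pair), none of these $3-i$ points can lie in $B_i$ itself beyond $x$; more carefully, a block through $x$ cannot contain a second point of $B_i$, so all $3-i$ of its $P$-points lie in $P \setminus B_i$, a set of size $4\pc - 4 = 4(\pc-1)$. Summing $(3-i)$ over the blocks gives $3t_x^0 + 2t_x^1 + t_x^2$ incidences with $P \setminus B_i$, and as before each such point is hit at most once (two blocks through $x$ meet only in $x$), yielding the bound $4(\pc-1)$, which is \eqref{eq2}.

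**The only subtle point**, and the step I would check most carefully, is the claim that each external point (whether in $T$ or in $P \setminus B_i$) is covered by at most one of the blocks through $x$; this is precisely the packing axiom applied to the pair formed by $x$ and the external point, so it is immediate, but one must be sure to correctly exclude the points of $B_i \setminus \{x\}$ in the second count so that the available pool has size exactly $4(\pc-1)$ rather than $4\pc - 1$. Everything else is a routine rearrangement of the incidence sums, so I do not anticipate any real obstacle.
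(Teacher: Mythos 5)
Your proof is correct: both inequalities follow exactly by the double-counting you describe, using the packing property to ensure that distinct blocks through $x$ meet only in $x$ and that no block outside $\Par$ contains a second point of the block $B_i \in \Par$ containing $x$. The paper declares this lemma ``straightforward'' and omits the argument, and yours is precisely the intended one, including the key accounting that the available pool in the second count is $P \setminus B_i$ of size $4(\pc-1)$.
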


\begin{Lemma}
\label{l3}
The number of blocks $b$ in the packing is given by the following formula:
\begin{equation}
\label{eq3}
b = \pc + \sum_{x \in P} t_x^3 + \frac{1}{2}\sum_{x \in P}t_x^2 + \frac{1}{3}\sum_{x \in P}t_x^1 
+ \frac{1}{4}\sum_{x \in P}t_x^0. 
\end{equation}
\end{Lemma}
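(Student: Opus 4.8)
The plan is to prove the identity by a double-counting argument that classifies every block of the packing according to how it meets the set $P$. Every block lies either in $\Par$ or not: the $\pc$ blocks of $\Par$ account for the leading term, so all the remaining work is to count the blocks of $\B \setminus \Par$ in terms of the quantities $t_x^i$.

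First I would record the structural fact that makes the count clean: since $\Par$ is a \emph{maximum} PPC, no block of $\B$ is contained entirely in $T$. Indeed, such a block would be disjoint from every $B_j \in \Par$ (as $T \cap P = \emptyset$), so it could be adjoined to $\Par$ to produce a PPC of size $\pc + 1$, a contradiction. Consequently every block not in $\Par$ meets $P$ in at least one point, so it has exactly $4 - i$ points in $P$ and $i$ points in $T$ for a unique $i \in \{0,1,2,3\}$. For each such $i$, let $n_i$ be the number of blocks of $\B \setminus \Par$ containing exactly $i$ points of $T$; then the number of blocks outside $\Par$ is $n_0 + n_1 + n_2 + n_3$.

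The heart of the argument is to relate $n_i$ to $\sum_{x \in P} t_x^i$. A block $B \notin \Par$ with exactly $i$ points in $T$ contains exactly $4 - i$ points of $P$, and by definition $B$ is counted in $\mathcal{T}_x^i$ for each of these $4 - i$ points $x$, and for no other point of $P$. Hence summing $t_x^i$ over all $x \in P$ counts each such block exactly $4 - i$ times, giving
\[
\sum_{x \in P} t_x^i = (4 - i)\, n_i \qquad (0 \le i \le 3).
\]
Solving for each $n_i$ and substituting into $b = \pc + n_0 + n_1 + n_2 + n_3$ produces
\[
b = \pc + \sum_{x \in P} t_x^3 + \frac{1}{2} \sum_{x \in P} t_x^2 + \frac{1}{3} \sum_{x \in P} t_x^1 + \frac{1}{4} \sum_{x \in P} t_x^0,
\]
which is exactly (\ref{eq3}).

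This argument is essentially bookkeeping, so I do not expect a genuine obstacle; the only step needing care is the exclusion of blocks contained in $T$, which is precisely where the maximality of $\Par$ enters. Omitting it would leave an uncounted class of blocks (those with $i = 4$), and the stated identity would fail. I would therefore state the maximality reduction explicitly before carrying out the count, and otherwise the proof reduces to the weighted partition of $\B \setminus \Par$ by the value of $i$.
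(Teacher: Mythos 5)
Your proof is correct: the maximality of $\Par$ rules out blocks contained in $T$, and the double count $\sum_{x \in P} t_x^i = (4-i)\,n_i$ yields exactly (\ref{eq3}). The paper states this lemma without proof (``straightforward''), and your argument is precisely the intended bookkeeping, with the one genuinely necessary point --- excluding blocks with $i=4$ via maximality --- correctly identified and justified.
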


For any $x \in P$, define 
\[c_x =  t_x^3 + \frac{t_x^2}{2} + \frac{t_x^1}{3} + \frac{t_x^0}{4} 
= \frac{12t_x^3 + 6t_x^2 + 4t_x^1 + 3t_x^0}{12}.\]
Then it is clear from Lemma \ref{l3} that the following equation holds:
\begin{equation}
\label{eq4}
b = \pc + \sum_{x \in P} c_x. 
\end{equation}

Our strategy is to obtain upper bounds on $c_x$ given the constraints (\ref{eq1}) and (\ref{eq2}). This leads to an integer program; however, for convenience,  we will consider the linear programming relaxation. 
For ease of notation, let us fix a point $x$ and denote $y_3 = t_x^3$,  $y_2 = t_x^2$, $y_1 = t_x^1$ and $y_0 = t_x^0$.
We are interested in the optimal solution to the following LP:

\begin{center}
\fancybox{ \vspace{-.2in}
\begin{alignat}{3}
&&& \nonumber \text{maximize } && 12y_3 + 6y_2 + 4y_1 + 3y_0 \\ 
&&& \nonumber \text{subject to the constraints}\\ 
&&&\label{LP1} 3y_3 + 2y_2 + y_1 \leq v - 4 \pc \\ 
&&&\label{LP2} y_2 + 2y_1 + 3y_0  \leq 4(\pc - 1)\\ 
&&&\nonumber y_3, y_2, y_1, y_0 \geq 0
\end{alignat}
}
\end{center}

\medskip

If we compute $4 \times (\ref{LP1}) + (\ref{LP2})$, we obtain the following bound:
\begin{equation}
\label{LP3} 12y_3 + 9y_2 + 6y_1 + 3y_0 \leq 4v - 12\pc -4.
\end{equation}
Since
\[  12c_x = 12y_3 + 6y_2 + 4y_1 + 3y_0 \leq 12y_3 + 9y_2 + 6y_1 + 3y_0, \] 
we have the following.
\begin{Lemma}
\begin{equation}
\label{LP4} c_x  \leq \frac{v - 3\pc -1}{3}.
\end{equation}
\end{Lemma}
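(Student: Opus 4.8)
The plan is to treat the quantity $12 c_x = 12 y_3 + 6 y_2 + 4 y_1 + 3 y_0$ as the objective of the linear program defined by the constraints (\ref{LP1}) and (\ref{LP2}), and to bound it from above by exhibiting suitable nonnegative multipliers for the two constraints. This is exactly the weak-duality side of linear programming: any nonnegative combination $\lambda_1 \cdot (\ref{LP1}) + \lambda_2 \cdot (\ref{LP2})$ whose left-hand-side coefficients dominate those of the objective yields an upper bound on the objective, valid for all nonnegative $y_3, y_2, y_1, y_0$ (and hence in particular for all feasible points).

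First I would search for multipliers $\lambda_1, \lambda_2 \geq 0$ such that $\lambda_1 (3,2,1,0) + \lambda_2 (0,1,2,3) \geq (12,6,4,3)$ componentwise, while keeping the resulting right-hand side small. The first and last coordinates force $\lambda_1 \geq 4$ and $\lambda_2 \geq 1$; taking $\lambda_1 = 4$ and $\lambda_2 = 1$, one checks that the two middle coordinates, $2\lambda_1 + \lambda_2 = 9 \geq 6$ and $\lambda_1 + 2\lambda_2 = 6 \geq 4$, are also satisfied. This is precisely the combination $4 \times (\ref{LP1}) + (\ref{LP2})$ that produces the inequality (\ref{LP3}).

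Next I would conclude as follows. Since $y_3, y_2, y_1, y_0 \geq 0$ and the coefficient vector $(12,9,6,3)$ appearing on the left-hand side of (\ref{LP3}) dominates the objective vector $(12,6,4,3)$ termwise, we obtain $12 c_x \leq 12 y_3 + 9 y_2 + 6 y_1 + 3 y_0 \leq 4v - 12\pc - 4$. Dividing by $12$ then gives $c_x \leq (v - 3\pc -1)/3$, which is the claimed bound (\ref{LP4}).

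I do not expect any genuine obstacle here. The only nontrivial ingredient is identifying the multipliers $(4,1)$, and these are forced almost immediately by the extreme coordinates of the objective, after which the two middle inequalities are a routine check. If one wanted the tightest bound obtainable from a linear combination of (\ref{LP1}) and (\ref{LP2}), one could instead solve the dual LP explicitly; but the simple choice above already yields the stated inequality, so no further optimization is required.
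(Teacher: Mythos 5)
Your proof is correct and is essentially identical to the paper's: the paper also forms the combination $4 \times (\ref{LP1}) + (\ref{LP2})$ to get $12y_3 + 9y_2 + 6y_1 + 3y_0 \leq 4v - 12\pc - 4$, observes that this left-hand side dominates $12c_x$ termwise by nonnegativity, and divides by $12$. Your framing in terms of weak LP duality and the systematic search for multipliers is just a more explicit account of how the combination $(4,1)$ is found; the underlying argument is the same.
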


\medskip

\begin{Remark}
{\rm We note that we can achieve equality in (\ref{LP4}) by taking
\[y_3  = \frac{v - 4\pc}{3}, \quad 
y_2 = 0, \quad 
y_1 = 0, \quad \text{and} \quad
y_0 = \frac{4(\pc-1)}{3}.
\]
Thus, the optimal solution to the LP is $4(v - 3\pc -1)$.\hfill$\blacksquare$
}
\end{Remark}

\medskip

We are also interested in the optimal solution to the LP in the special cases where $y_3 \leq 3$. 
Here, we just use the inequality \[6y_2 + 12y_1 + 18y_0 \leq 24(\pc - 1),\] which follows immediately from (\ref{LP2}).
Since \[ 12c_x = 12y_3 + 6y_2 + 4y_1 + 3y_0 \leq 12y_3 + 6y_2 + 12y_1 + 18y_0, \] we obtain
the following.
\begin{Lemma}
\label{lem5}
\begin{equation}
\label{LP5} c_x  \leq y_3 + 2\pc - 2.
\end{equation}
\end{Lemma}
Since we are assuming that $y_3 \leq 3$, we have
\begin{equation}
\label{LP6} c_x  \leq 2 \pc +1.
\end{equation}

\medskip

\begin{Remark}
{\rm We can achieve equality in (\ref{LP5}) by taking
\[
y_2 = 4(\pc - 1), \quad 
y_1 = 0, \quad \text{and} \quad
y_0 = 0.
\]

This is a feasible solution to the LP provided that (\ref{LP1}) is satisfied, i.e., if 
\[ 3y_3 + 8(\pc-1) \leq v - 4 \pc,\]
which simplifies to 
\[v \geq 3y_3 + 12 \pc - 8.\]
We are assuming that $y_3 \leq 3$, so the optimal solution to the LP 
is $24 \pc + 12$ whenever $v \geq 12 \pc +1$.\hfill$\blacksquare$
}
\end{Remark}

\medskip

The following lemma is an immediate application of (\ref{LP6}).

\begin{Lemma}
\label{lem3.10}
Suppose that 
$B_i  = \{w,x,y,z\} \in \Par$ and $\max \{ t_w^3, t_x^3,t_y^3,t_z^3\} \leq 3$. Then 
\begin{equation}
\label{LP7}
c_w  + c_x  + c_y  + c_z  \leq  8 \pc + 4.
\end{equation}
\end{Lemma}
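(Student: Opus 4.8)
The claim is that if $B_i=\{w,x,y,z\}\in\Par$ and all four of $t_w^3,t_x^3,t_y^3,t_z^3$ are at most $3$, then $c_w+c_x+c_y+c_z\le 8\pc+4$. The statement is flagged as ``an immediate application of (\ref{LP6})'', so the plan is essentially to apply the individual bound to each of the four points and sum.

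My plan is as follows. First I would observe that each of $w,x,y,z$ is a point of $P$, so for each of them the two constraints (\ref{eq1}) and (\ref{eq2}) hold, meaning that the quantities $(y_3,y_2,y_1,y_0)=(t_\cdot^3,t_\cdot^2,t_\cdot^1,t_\cdot^0)$ form a feasible point of the LP for that particular point. Next, the hypothesis tells me precisely that $y_3=t_\cdot^3\le 3$ for each of the four points, which is exactly the side condition under which (\ref{LP6}) was derived. Therefore I may invoke Lemma \ref{lem5}, or more directly its consequence (\ref{LP6}), to conclude
\[
c_w\le 2\pc+1,\quad c_x\le 2\pc+1,\quad c_y\le 2\pc+1,\quad c_z\le 2\pc+1.
\]
Summing these four inequalities gives
\[
c_w+c_x+c_y+c_z\le 4(2\pc+1)=8\pc+4,
\]
which is (\ref{LP7}).

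The one point that deserves a sentence of care is confirming that the bound (\ref{LP6}) genuinely applies to each point individually: (\ref{LP6}) is the specialization of (\ref{LP5}) under the assumption $y_3\le 3$, and the hypothesis of the lemma supplies exactly this for all four of $w,x,y,z$. There is no interaction between the four points needed --- each bound is a standalone consequence of that point's own constraint (\ref{eq2}) (which feeds (\ref{LP5})) together with its own value of $t_\cdot^3\le 3$. Since the four blocks being counted for distinct points may overlap in $T$, one might worry about double-counting, but this is irrelevant here: the $c_\cdot$ are defined pointwise and we are merely bounding their sum, not the number of distinct blocks, so no disjointness or inclusion-exclusion argument is required.

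Consequently I do not expect any real obstacle; the lemma is a direct four-fold application of an already-established per-point estimate. The only thing to state explicitly in the write-up is the hypothesis $\max\{t_w^3,t_x^3,t_y^3,t_z^3\}\le 3$ licensing the use of (\ref{LP6}) at each of the four points, after which the conclusion follows by addition.
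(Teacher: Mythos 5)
Your proof is correct and matches the paper exactly: the paper gives no separate argument beyond calling the lemma an immediate application of (\ref{LP6}), which is precisely your four-fold application of the per-point bound $c_\cdot \leq 2\pc+1$ followed by summation. Your remark that no disjointness or double-counting issue arises (since the $c_\cdot$ are bounded pointwise) is a sound clarification, though not needed beyond what the paper intends.
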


\begin{Lemma}
\label{lem3.11}
Suppose that $B_i  = \{w,x,y,z\} \in \Par$ and $\max \{ t_w^3, t_x^3,t_y^3,t_z^3\} \geq 4$. Then 
\begin{equation}
\label{LP8}
c_w  + c_x  + c_y  + c_z  \leq \frac{v}{3} + 5 \pc -\frac{19}{3}.
\end{equation}
\end{Lemma}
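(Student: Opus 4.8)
The plan is to exploit the structure forced by Lemma~\ref{l-t_w} together with the two linear-programming bounds already established. Since the hypothesis $\max\{t_w^3,t_x^3,t_y^3,t_z^3\}\geq 4$ is symmetric in the four points of $B_i$, I would first relabel so that $t_w^3 = \max\{t_w^3,t_x^3,t_y^3,t_z^3\}\geq 4$. Lemma~\ref{l-t_w} then immediately gives $t_x^3 = t_y^3 = t_z^3 = 0$. This is the crucial structural input: the hypothesis that \emph{some} $t^3$ exceeds $3$ forces the other three points of the block to lie in \emph{no} block having three points in $T$, which is exactly what lets us treat $w$ and $\{x,y,z\}$ asymmetrically.

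Next I would bound each term $c_\bullet$ separately. For the distinguished point $w$ I would apply the general estimate (\ref{LP4}), namely $c_w \leq (v - 3\pc - 1)/3$, which is valid for every point of $P$. For each of the three remaining points I would invoke (\ref{LP5}) from Lemma~\ref{lem5}, which asserts $c_x \leq t_x^3 + 2\pc - 2$; substituting $t_x^3 = t_y^3 = t_z^3 = 0$ yields $c_x, c_y, c_z \leq 2\pc - 2$. Summing the four inequalities then gives
\[
c_w + c_x + c_y + c_z \;\leq\; \frac{v - 3\pc - 1}{3} + 3(2\pc - 2) \;=\; \frac{v}{3} + 5\pc - \frac{19}{3},
\]
which is precisely the claimed bound (\ref{LP8}).

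I expect no genuine obstacle here; the lemma is an almost immediate corollary of Lemma~\ref{l-t_w} and the already-proved LP bounds. The only points needing care are purely verificational: first, that (\ref{LP5}) may legitimately be applied to a point with $t^3 = 0$, which is fine because (\ref{LP5}) was derived solely from constraint (\ref{eq2}) and hence holds for every point of $P$ irrespective of its $t^3$ value; and second, the bookkeeping of the constant term, $-\tfrac13 - 6 = -\tfrac{19}{3}$. The conceptual content lies entirely in recognizing that the $\max\geq 4$ hypothesis collapses three of the four $t^3$ values to zero, so that the weaker bound (\ref{LP5}) suffices for those three points while only the single point $w$ incurs the more expensive bound (\ref{LP4}).
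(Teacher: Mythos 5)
Your proof is correct and is essentially identical to the paper's: the same relabeling so that $t_w^3$ is the maximum, the same use of Lemma \ref{l-t_w} to force $t_x^3 = t_y^3 = t_z^3 = 0$, the bound (\ref{LP4}) for $c_w$ and (\ref{LP5}) with $y_3=0$ for the other three points, and the same arithmetic. Nothing to add.
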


\begin{proof}
Without loss of generality, assume that $t_w^3 = \max \{ t_w^3, t_x^3,t_y^3,t_z^3\} \geq 4$.
Then $t_x^3 = t_y^3 =t_z^3 = 0$. Hence, we have
$c_w \leq (v - 3\pc -1)/3$ from (\ref{LP4}) and we obtain 
$c_x, c_y, c_z \leq 2\pc - 2$ by setting $y_3 = 0$ in  (\ref{LP5}). Hence,
\[ c_w  + c_x  + c_y  + c_z  \leq 3(2\pc - 2) + \frac{v - 3\pc -1}{3} = \frac{v}{3} + 5 \pc -\frac{19}{3}.\]
\end{proof}

\begin{Theorem}
\label{bound-improved}
Suppose $v \geq 9 \pc + 31$. Then
\[ \beta (\pc,v,4) \leq \frac{\pc v}{3} + 5 \pc^2 -\frac{16\pc}{3}.\]
\end{Theorem}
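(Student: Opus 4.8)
The plan is to combine the block‑counting identity (\ref{eq4}) with the two per‑block estimates furnished by Lemmas \ref{lem3.10} and \ref{lem3.11}. Recall that $b = \pc + \sum_{x \in P} c_x$, and that $P$ is partitioned into the $\pc$ blocks $B_1, \dots, B_\pc$ of the maximum PPC $\Par$. Hence the strategy is to bound the partial sum $\sum_{x \in B_i} c_x$ uniformly over all $\pc$ blocks and then add these bounds together, tacking on the extra $+\pc$ from (\ref{eq4}) at the end.

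For a fixed block $B_i = \{w,x,y,z\}$, exactly one of two situations occurs. Either $\max \{ t_w^3, t_x^3, t_y^3, t_z^3 \} \leq 3$, in which case Lemma \ref{lem3.10} gives $c_w + c_x + c_y + c_z \leq 8\pc + 4$; or $\max \{ t_w^3, t_x^3, t_y^3, t_z^3 \} \geq 4$, in which case Lemma \ref{lem3.11} gives $c_w + c_x + c_y + c_z \leq \frac{v}{3} + 5\pc - \frac{19}{3}$. The crucial observation — and essentially the only real content of the argument — is to compare these two per‑block bounds. A direct calculation shows that
\[ \frac{v}{3} + 5\pc - \frac{19}{3} \geq 8\pc + 4 \quad \Longleftrightarrow \quad v \geq 9\pc + 31, \]
so the hypothesis of the theorem is precisely the threshold at which the Lemma \ref{lem3.11} estimate dominates the Lemma \ref{lem3.10} estimate.

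Consequently, whenever $v \geq 9\pc + 31$, the quantity $\frac{v}{3} + 5\pc - \frac{19}{3}$ is an upper bound for $\sum_{x \in B_i} c_x$ in \emph{both} cases, regardless of which regime the block $B_i$ falls into. Summing this single estimate over the $\pc$ blocks of $\Par$ yields $\sum_{x \in P} c_x \leq \pc \left( \frac{v}{3} + 5\pc - \frac{19}{3} \right)$, and substituting into (\ref{eq4}) gives
\[ b \leq \pc + \pc\left(\frac{v}{3} + 5\pc - \frac{19}{3}\right) = \frac{\pc v}{3} + 5\pc^2 - \frac{16\pc}{3}, \]
where the last step uses $\pc - \frac{19\pc}{3} = -\frac{16\pc}{3}$. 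This is exactly the claimed bound.

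I do not anticipate any serious obstacle: once Lemmas \ref{lem3.10} and \ref{lem3.11} are available, the proof reduces to the case comparison above followed by a one‑line summation. The only point requiring genuine care is verifying that the threshold arising from the comparison of the two per‑block bounds coincides with the hypothesis $v \geq 9\pc + 31$, since it is this coincidence that lets us apply one uniform bound to every block of $\Par$ and thereby avoid having to track, block by block, which of the two cases actually occurs.
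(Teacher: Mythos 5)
Your proposal is correct and matches the paper's own proof essentially verbatim: both apply Lemmas \ref{lem3.10} and \ref{lem3.11} to each block of $\Par$, observe that the hypothesis $v \geq 9\pc + 31$ is exactly the condition under which $\frac{v}{3} + 5\pc - \frac{19}{3} \geq 8\pc + 4$, and then sum the uniform bound over the $\pc$ blocks via (\ref{eq4}). Your arithmetic at each step (the threshold equivalence and the final simplification to $\frac{\pc v}{3} + 5\pc^2 - \frac{16\pc}{3}$) checks out.
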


\begin{proof}
If $v \geq 9 \pc + 31$, then 
\[ \frac{v}{3} + 5 \pc -\frac{19}{3} \geq 8 \pc + 4.\]
Hence, from Lemmas \ref{lem3.10} and \ref{lem3.11}, 
\[c_w  + c_x  + c_y  + c_z  \leq \frac{v}{3} + 5 \pc -\frac{19}{3}\]
for all $\pc$ blocks $\{w,x,y,z\} \in \Par$.
Now, applying (\ref{eq4}), we obtain the upper bound
\[ \beta (\pc,v,4) \leq \pc + \pc \left( \frac{v}{3} + 5 \pc -\frac{19}{3} \right) 
= \frac{\pc v}{3} + 5 \pc^2 -\frac{16\pc}{3}.\]
\end{proof}

\begin{Remark}
{\rm 
Ignoring lower order terms, the upper bound on $\beta (\pc,v,4)$ proven in Theorem \ref{bound-improved} is 
\[\frac{\pc v}{3} + 5 \pc^2,\] while the previous bound from Corollary \ref{cor-bound} was 
\[\frac{\pc v}{3} + \frac{20 \pc^2}{3}.\]
\hfill$\blacksquare$
}
\end{Remark}


\subsection{The Third  Bound}

The third upper bound on $\beta( \rho, v, 4)$ is based on more refined analysis of $\mathcal{T}_x^i, 0\le i \le 3$. 
As defined above, the blocks in the PPC are denoted as $B_i = \{ a_i, b_i, c_i, d_i\},$ for $i = 1, \dots, \rho$. We further 
assume that 
\[t_{a_1}^3 \le
t_{a_2}^3 \le \cdots \le t_{a_\rho}^3\] and  \[t_{a_i}^3 \ge \max \{ t_{b_i}^3,  t_{c_i}^3, t_{d_i}^3\},\]
for $i = 1,\dots, \rho$. Let $A = \{ a_i: 1\le i \le \rho \}$.

Now we will partition the blocks of $\mathcal B \setminus \mathcal P$ into various subsets as follows.
\begin{enumerate}
\item
$\{\mathcal{T}^3_x: x \in P\}$.
\item
For blocks in $\bigcup_{x \in P} \mathcal{T}_x^2$, let 
\begin{eqnarray*}
\mathcal{A}_i &=& \{ \{ a_i, e, y, z\} \in \mathcal B\setminus \mathcal{P}: e \in P\setminus A; \; y, z \in T\}\\
\mathcal{A}_i' &=& \{ \{ a_i, a_s, y, z \} \in \mathcal B\setminus  \mathcal{P}: i+1 \le s \le \rho; \; y, z \in T \}\\ 
\mathcal{C} &=& \{ \{ e, f, y, z \} \in \mathcal B \setminus  \mathcal{P} : e, f \in P\setminus A;\;  y, z \in T \}.
\end{eqnarray*}
The blocks in $\mathcal{A}_i$ contain one point in $A$, the blocks in $\mathcal{A}'_i$ contain two points in $A$, and the  blocks in $\mathcal{C}$ contain no points in $A$. 
\item
For blocks in $\bigcup_{x \in P} \mathcal{T}_x^1$, let 
\begin{eqnarray*}
\mathcal{E}   &=& \{ \{ e, f, g, z \} \in \mathcal B\setminus \mathcal P: 
e, f , g\in P\setminus A;\;  
z \in T \},
\end{eqnarray*} 
and let $\mathcal{E}'$ consist of the remaining blocks  in $\bigcup_{x \in P} \mathcal{T}_x^1$ 
 (the blocks in $\mathcal{E}$ contain no points in $A$ and the blocks in $\mathcal{E}'$ contain at least one point in $A$).
 Further, we partition the blocks in $\mathcal{E}'$ into subsets $\mathcal{E}'_1, \dots , \mathcal{E}'_{\rho}$, where a block in $B\in \mathcal{E}'$ is placed in $\mathcal{E}'_i$ if $a_i \in B$ and $a_j \not\in B$ for any $j < i$.
\item
 For blocks in $\bigcup_{x \in P} \mathcal{T}_x^0$, let 
\begin{eqnarray*}
\mathcal{F}   &=& \{ \{ e, f, g, h \} \in \mathcal B\setminus P: e, f, g, h
  \in P\setminus A\}, 
  \end{eqnarray*}
 and let $\mathcal{F}'$ consist of the remaining blocks  in  $\bigcup_{x \in P} \mathcal{T}_x^0$ (the blocks in $\mathcal{F}$ contain no points in $A$ and the blocks in $\mathcal{F}'$ contain at least one point in $A$).  Further, we partition the blocks in $\mathcal{F}'$ into subsets $\mathcal{F}'_1, \dots ,\mathcal{F}'_{\rho}$, where a block in $B\in \mathcal{F}'$ is placed in $\mathcal{F}'_i$ if $a_i \in B$ and $a_j \not\in B$ for any $j < i$.
\end{enumerate}

For future use, we define the following notation:
\begin{eqnarray*}
\alpha_i & = & |\mathcal{A}_i| + |\mathcal{A}'_i|, \\
\epsilon_i & = & |\mathcal{E}'_i|, \quad \text{and}  \\
\zeta_i & = & |\mathcal{F}'_i| ,
\end{eqnarray*}
for $i = 1, \dots , \rho$.

 Since the various subsets of blocks defined above are disjoint,  the total number of  blocks
 in the packing is 
\begin{equation}
\label{total.eq}
b = \sum_{x \in P}|\mathcal{T}_x^3| + \sum_{i = 1}^ \rho (\alpha_i + \epsilon_i + \zeta_i) + |\mathcal{C}| +|\mathcal{E}| +  |\mathcal{F}|   + \rho.
\end{equation}

  \begin{Lemma}
  For $i = 1, \dots, \rho$, it holds that
  \begin{equation} \alpha_i + 2 \epsilon_i + 3 \zeta_i \le 4\rho -i - 3,\label{f.C}.
  \end{equation}
  \begin{proof}
  Let $1 \leq i \leq \rho$.  Denote $U_i = P \setminus ( A_i \cup \{a_j : j < i \} )$. We note that $|U_i|= 4\rho -i - 3$.
  Each block in $|\mathcal{A}_i| \cup |\mathcal{A}'_i|$ contains one point in $U_i$, each block in $|\mathcal{E}'_i|$ contains two points from $U_i$ and each block in $|\mathcal{F}'_i|$ contains three points from $U_i$. Further, each point from $U_i$ occurs in at most one block of $|\mathcal{A}_i| \cup |\mathcal{A}'_i| \cup
 |\mathcal{E}'_i| \cup |\mathcal{F}'_i|$. The result follows.
  \end{proof}
  \end{Lemma}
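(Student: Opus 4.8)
The plan is to fix an index $i$ with $1 \le i \le \pc$ and to read the left-hand side as a weighted count of the points of $P$ that are consumed by the blocks contributing to $\alpha_i$, $\epsilon_i$ and $\zeta_i$. The crucial first observation I would make is that every block counted by these three quantities contains the point $a_i$: the blocks of $\mathcal{A}_i$ and $\mathcal{A}'_i$ contain $a_i$ by definition, and a block lands in $\mathcal{E}'_i$ or $\mathcal{F}'_i$ precisely when $a_i$ is its smallest-indexed point of $A$, so $a_i$ is in it. Because $\STS$ is a packing, any two distinct blocks through $a_i$ can meet only in $a_i$; hence the points other than $a_i$ that appear in the blocks of $\mathcal{A}_i \cup \mathcal{A}'_i \cup \mathcal{E}'_i \cup \mathcal{F}'_i$ are pairwise distinct.

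Next I would record, for each family, how many points of $P$ other than $a_i$ a block contributes. A block of $\mathcal{A}_i$ or $\mathcal{A}'_i$ has two of its four points in $T$ (it lies in $\mathcal{T}_{a_i}^2$), so besides $a_i$ it contributes exactly one point of $P$; a block of $\mathcal{E}'_i$ lies in $\mathcal{T}_{a_i}^1$ and so contributes two points of $P$ besides $a_i$; a block of $\mathcal{F}'_i$ lies in $\mathcal{T}_{a_i}^0$ and contributes three. Summing these contributions and invoking the distinctness from the first step, the total number of distinct points of $P \setminus \{a_i\}$ that occur is exactly $\alpha_i + 2\epsilon_i + 3\zeta_i$.

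It then remains to pin down the set $U_i$ in which all these contributed points must lie and to compute $|U_i|$. I would argue that none of them can be $b_i$, $c_i$ or $d_i$: each of these shares the block $B_i \in \Par$ with $a_i$, so by the packing property it cannot occur in any other block through $a_i$. I would further argue that none can be $a_j$ with $j < i$: for $\mathcal{A}_i$ the extra point lies in $P \setminus A$, for $\mathcal{A}'_i$ it is $a_s$ with $s > i$, and for $\mathcal{E}'_i$ and $\mathcal{F}'_i$ the defining rule forbids any $a_j$ with $j < i$. Hence every contributed point lies in $U_i = P \setminus \left( B_i \cup \{a_j : j < i\} \right)$. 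Since $a_i \in B_i$ while the $a_j$ with $j < i$ are disjoint from $B_i$, we get $|U_i| = 4\pc - (4 + (i-1)) = 4\pc - i - 3$, and combining this with the count above yields $\alpha_i + 2\epsilon_i + 3\zeta_i \le |U_i| = 4\pc - i - 3$, as required.

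The step I expect to demand the most care is the second one, where the exact per-block multiplicities (one, two, three extra points of $P$) and the pairwise distinctness must hold simultaneously; both rest on the facts that each block of these families sits in a prescribed $\mathcal{T}_{a_i}^i$ and that two blocks through $a_i$ overlap only at $a_i$. Once the definitions of $\mathcal{A}_i$, $\mathcal{A}'_i$, $\mathcal{E}'_i$ and $\mathcal{F}'_i$ are unpacked, the exclusion of $\{b_i,c_i,d_i\}$ and of $\{a_j : j < i\}$ and the final cardinality computation are routine.
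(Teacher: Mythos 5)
Your proof is correct and takes essentially the same approach as the paper's: every block counted by $\alpha_i$, $\epsilon_i$, $\zeta_i$ contains $a_i$, so by the packing property their remaining points of $P$ are pairwise distinct and all lie in $U_i = P \setminus \left( B_i \cup \{a_j : j < i\} \right)$, a set of size $4\rho - i - 3$, giving the weighted count $\alpha_i + 2\epsilon_i + 3\zeta_i \le |U_i|$. If anything, your write-up is more careful than the paper's terse version: you correctly identify the deleted block as $B_i$ (the paper's ``$A_i$'' in the definition of $U_i$ is evidently a typo for $B_i$, as the cardinality computation confirms) and you explicitly verify the per-block multiplicities via membership in $\mathcal{T}_{a_i}^2$, $\mathcal{T}_{a_i}^1$ and $\mathcal{T}_{a_i}^0$.
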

  
  \begin{Lemma}
 For $i = 1, \dots, \rho$, it holds that
 \begin{equation} t_{a_i}^3 + \alpha_i +  \epsilon_i +  \zeta_i \le 
 \frac{v-i-3}{3}.
 \label{f.cor}
  \end{equation}
    \end{Lemma}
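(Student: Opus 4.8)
The plan is to pair the previous inequality (\ref{f.C}) with a complementary inequality obtained by counting how many points of $T$ are consumed by the blocks through $a_i$, and then simply add the two bounds. The point is that (\ref{f.C}) controls the $P$-points outside $A$ used by these blocks, while the new inequality controls the $T$-points they use; if the coefficients line up, the two together pin down $t_{a_i}^3 + \alpha_i + \epsilon_i + \zeta_i$.

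First I would observe that every block counted by $t_{a_i}^3$, $\alpha_i$, $\epsilon_i$, and $\zeta_i$ contains the point $a_i$: the blocks of $\mathcal{T}_{a_i}^3$ do by definition, and each block of $\mathcal{A}_i$, $\mathcal{A}_i'$, $\mathcal{E}_i'$, and $\mathcal{F}_i'$ is defined to contain $a_i$. These collections are pairwise disjoint, being distinguished by their number of points in $T$ (respectively $3,2,1,0$) and, within the level of two $T$-points, by whether the second $P$-point lies in $A$. The crucial observation is that two distinct blocks that both contain $a_i$ can meet only in $a_i$ — otherwise some pair $\{a_i,w\}$ would lie in two blocks — so the points of $T$ appearing in all of these blocks are pairwise distinct.

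Counting those $T$-points, a block of $\mathcal{T}_{a_i}^3$ contributes three, a block of $\mathcal{A}_i \cup \mathcal{A}_i'$ contributes two, a block of $\mathcal{E}_i'$ contributes one, and a block of $\mathcal{F}_i'$ contributes none. Since $|T| = v - 4\rho$, this yields
\[ 3 t_{a_i}^3 + 2\alpha_i + \epsilon_i \le v - 4\rho. \]
Adding this to (\ref{f.C}), namely $\alpha_i + 2\epsilon_i + 3\zeta_i \le 4\rho - i - 3$, the coefficients of $t_{a_i}^3$, $\alpha_i$, $\epsilon_i$, and $\zeta_i$ each sum to exactly $3$, so the $4\rho$ terms cancel and I obtain $3\left(t_{a_i}^3 + \alpha_i + \epsilon_i + \zeta_i\right) \le v - i - 3$; dividing by $3$ gives (\ref{f.cor}).

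I expect the only real subtlety to be the first step — verifying that all the relevant blocks pass through $a_i$ and therefore occupy pairwise disjoint points of $T$, so that the $T$-count is a genuine sum rather than an over-count. Once that is in place, the clean matching of coefficients (which is what makes the two bounds add so that the $4\rho$ terms vanish) does all the work, and no further case analysis or optimization is needed.
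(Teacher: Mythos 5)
Your proof is correct, but it reaches (\ref{f.cor}) by a different decomposition than the paper's. The paper argues in a single stroke: every block counted on the left side contains $a_i$, so by the packing property these blocks are pairwise disjoint apart from $a_i$; moreover none of them contains $b_i$, $c_i$ or $d_i$ (each such pair with $a_i$ already occurs in $B_i$) nor any $a_j$ with $j<i$ (by the definitions of $\mathcal{A}'_i$, $\mathcal{E}'_i$, $\mathcal{F}'_i$). Hence each block consumes three fresh points from a pool of size $v-1-3-(i-1)=v-i-3$, which gives the bound directly. You instead count only the $T$-points, obtaining $3t_{a_i}^3 + 2\alpha_i + \epsilon_i \le v-4\rho$, and add this to (\ref{f.C}). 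Since the pool in the paper's count is exactly $T \cup U_i$ with $|T|+|U_i| = (v-4\rho)+(4\rho-i-3) = v-i-3$, and your coefficients each sum to $3$, your two-step argument is precisely the paper's single count split along the partition of the available points into $T$ and $U_i$. What your version buys is modularity: (\ref{f.C}) is reused as a black box, the only new verification needed is the $T$-count, and the clean cancellation of the $4\rho$ terms makes transparent why the right side is $\frac{v-i-3}{3}$; the cost is that it is slightly longer and less self-contained than the paper's direct count. Both arguments hinge on the same key observation, which you correctly identify and justify via the packing property: all the relevant blocks pass through $a_i$ and therefore meet pairwise only in $a_i$.
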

 \begin{proof} 
 Let $1 \leq i \leq \rho$. Every block  in $\mathcal{T}_{a_i}^3 \cup |\mathcal{A}_i| \cup |\mathcal{A}'_i| \cup
 |\mathcal{E}'_i| \cup |\mathcal{F}'_i|$  contains the point $a_i$ but these blocks are otherwise disjoint. Also, none of these blocks contains $b_i, c_i$ or $d_i$, or any $a_j$ with $j < i$. The result follows.
    \end{proof}

  \medskip

  For $1 \leq i \leq \rho$, 
  denote $L_i = t_{a_i}^3 +t_{b_i}^3+t_{c_i}^3+ t_{d_i}^3+  \alpha_i + \epsilon_i + \zeta_i $.
 
\begin{Lemma} \label{r.1}
 Suppose $v \ge 12\rho + 28$. Then for each $i, 1\le i \le \rho$, we have 
 \begin{eqnarray}\label{f.1}
 L_i \le {v-i -3\over 3}.
 \end{eqnarray}
 \end{Lemma}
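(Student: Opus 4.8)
The plan is to split into two cases according to Lemma \ref{l-t_w}, applied to the block $B_i = \{a_i,b_i,c_i,d_i\}$. By our ordering assumptions, $a_i$ is the point of $B_i$ attaining the maximum value of $t^3$, so Lemma \ref{l-t_w} (with $w = a_i$) applies.

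First suppose $t_{a_i}^3 \ge 4$. Then Lemma \ref{l-t_w} forces $t_{b_i}^3 = t_{c_i}^3 = t_{d_i}^3 = 0$, so $L_i = t_{a_i}^3 + \alpha_i + \epsilon_i + \zeta_i$, and inequality (\ref{f.cor}) gives $L_i \le \frac{v-i-3}{3}$ immediately, with no use of the hypothesis on $v$. This case is essentially free.

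The substantive case is $t_{a_i}^3 \le 3$. Since $a_i$ attains the maximum of $t^3$ over $B_i$, we also have $t_{b_i}^3, t_{c_i}^3, t_{d_i}^3 \le 3$, whence $t_{a_i}^3 + t_{b_i}^3 + t_{c_i}^3 + t_{d_i}^3 \le 12$. For the remaining term I would discard the nonnegative quantities $\epsilon_i$ and $2\zeta_i$ appearing in (\ref{f.C}) to obtain $\alpha_i + \epsilon_i + \zeta_i \le \alpha_i + 2\epsilon_i + 3\zeta_i \le 4\rho - i - 3$. Combining the two estimates yields $L_i \le 12 + (4\rho - i - 3) = 4\rho - i + 9$. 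It then remains to check the numerical inequality $4\rho - i + 9 \le \frac{v-i-3}{3}$, which rearranges to $v \ge 12\rho + 30 - 2i$; the right-hand side is largest at $i = 1$, where it equals $12\rho + 28$, exactly the hypothesis, so the bound holds uniformly for $1 \le i \le \rho$.

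There is no real combinatorial obstacle here: the argument uses only Lemma \ref{l-t_w} together with the two previously established counting bounds (\ref{f.C}) and (\ref{f.cor}). The one point requiring care is the bookkeeping at the end — one must confirm that the threshold $12\rho + 28$ is dictated by the worst case $i = 1$, and that relaxing (\ref{f.C}) by dropping $\epsilon_i$ and $2\zeta_i$ is legitimate, which it is since every coefficient in (\ref{f.C}) is at least $1$ and we need only the unweighted sum $\alpha_i + \epsilon_i + \zeta_i$.
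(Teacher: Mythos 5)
Your proof is correct and follows essentially the same route as the paper: the same two-case split on $t_{a_i}^3 \ge 4$ versus $t_{a_i}^3 \le 3$, using (\ref{f.cor}) in the first case and (\ref{f.C}) in the second. The only cosmetic difference is at the end, where you verify $4\rho - i + 9 \le \frac{v-i-3}{3}$ directly (identifying $i=1$ as the worst case), while the paper substitutes $\rho \le \frac{v-28}{12}$ and simplifies --- the two computations are equivalent.
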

 \begin{proof}
 
 First, consider the case when $t_{a_i}^3 \ge 4$. Then, $t_{b_i}^3 + t_{c_i}^3 + t_{d_i}^3 = 0$ and
 we have 
\begin{eqnarray*} 
L_i &=&  t_{a_i}^3 +  \alpha_i + \epsilon_i + \zeta_i \\
&\leq & \frac{v - i - 3 }{3}
 \end{eqnarray*} 
 from (\ref{f.cor}).

 Now we consider the case where $t_{a_i}^3 \le 3$.
 Then we have 
  \begin{eqnarray*} 
L_i &\leq& 12 + \alpha_i +  \epsilon_i +  \zeta_i \\
&\leq& 12 + \alpha_i + 2 \epsilon_i + 3 \zeta_i \\
&\leq & 12 + 4\pc - i -3\\
& = & 4\pc - i +9,
 \end{eqnarray*} 
 from (\ref{f.C}).
 Since $v \ge 12\rho + 28$, we have $\pc \leq \frac{v-28}{12}$
 and hence
   \begin{eqnarray*} 
L_i &\leq& \frac{v-28}{3} - i + 9  \\
&=& \frac{v-3i-1}{3} \\
&\leq & \frac{v-i-3}{3}, 
 \end{eqnarray*} 
 since $i \geq 1$.
\end{proof}


\begin{Lemma}\label{l.emptyC}
If $\pc\geq 2$, $t_{a_1}^3 \ge 3$ and $t_{a_2}^3 \ge 6$, then $\mathcal{C} = \emptyset$.
\end{Lemma}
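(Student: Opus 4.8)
\section*{Proof proposal}

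The plan is to argue by contradiction: assume $\mathcal{C} \ne \emptyset$ and fix a block $C = \{e,f,y,z\} \in \mathcal{C}$, so $e,f \in P \setminus A$ and $y,z \in T$. I will exhibit a partial parallel class of size $\pc+1$, contradicting the maximality of $\Par$. The one quantitative fact I will use repeatedly is this: since $\STS$ is a packing, two distinct blocks of $\mathcal{T}_a^3$ meet only in the common point $a$, so their triples of $T$-points are pairwise disjoint. Hence each point of $T$ lies in at most one block of $\mathcal{T}_a^3$, and for any fixed set of $m$ points of $T$, at most $m$ blocks of $\mathcal{T}_a^3$ meet that set; at least $t_a^3 - m$ of them avoid it entirely.

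First I record where $e$ and $f$ live: each lies in a unique block of $\Par$, say $e \in B_j$ and $f \in B_k$, and since $e,f \notin A$ neither equals $a_j$ nor $a_k$. The strategy in both cases is the same: delete from $\Par$ the blocks containing $e$ and $f$, insert $C$, and then re-cover the displaced $A$-points $a_j$ (and $a_k$) using blocks of $\mathcal{T}^3$ chosen to avoid the relevant points of $T$. I split on whether $j = k$ or $j \ne k$.

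In the case $j = k$, I remove the single block $B_j$ and adjoin $C$; the only displaced $A$-point is $a_j$. Because $t_{a_j}^3 \ge t_{a_1}^3 \ge 3$, at least $t_{a_j}^3 - 2 \ge 1$ blocks of $\mathcal{T}_{a_j}^3$ avoid $\{y,z\}$, and I pick one such block $B^{(j)}$. Then $\big(\Par \setminus \{B_j\}\big) \cup \{C, B^{(j)}\}$ is a set of pairwise disjoint blocks: the surviving $\Par$-blocks lie in $P$ and avoid $e,f,a_j$, while $B^{(j)}$ meets $C$ in no point by the choice avoiding $\{y,z\}$. This is a PPC of size $\pc+1$, a contradiction. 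Note only $t_{a_1}^3 \ge 3$ was used here.

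The case $j \ne k$ is the crux and the expected obstacle, because I must re-cover \emph{two} points $a_j, a_k$ by disjoint $\mathcal{T}^3$-blocks that also avoid $C$. Relabel so that $j < k$; then $k \ge 2$, whence $t_{a_k}^3 \ge t_{a_2}^3 \ge 6$, while $t_{a_j}^3 \ge t_{a_1}^3 \ge 3$. The key move is to choose the block for the \emph{weaker} index first: pick $B^{(j)} \in \mathcal{T}_{a_j}^3$ avoiding $\{y,z\}$ (possible since $t_{a_j}^3 - 2 \ge 1$), and let $y_1,y_2,y_3$ denote its three $T$-points. Now pick $B^{(k)} \in \mathcal{T}_{a_k}^3$ avoiding the five points $\{y,z,y_1,y_2,y_3\}$, which is possible precisely because $t_{a_k}^3 - 5 \ge 1$. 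Deleting $B_j, B_k$ from $\Par$ and adjoining $C, B^{(j)}, B^{(k)}$ yields $\pc - 2 + 3 = \pc+1$ pairwise disjoint blocks (the chosen $T$-points separate $B^{(j)}, B^{(k)}, C$, and $a_j \ne a_k$), the desired contradiction. The only nontrivial bookkeeping is the ``five forbidden points, at least six candidates'' count, which is exactly why the hypothesis $t_{a_2}^3 \ge 6$ is needed rather than merely $t_{a_2}^3 \ge 3$; the routine verification that the surviving $\Par$-blocks (contained in $P$) are disjoint from the three inserted blocks completes the argument.
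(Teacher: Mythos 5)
Your proof is correct and takes essentially the same approach as the paper's: choose the replacement block for the weaker index first, avoiding the two $T$-points of $C$ (which needs $t_{a_j}^3 \ge 3$), then choose the block for the stronger index avoiding the five forbidden $T$-points (which needs $t_{a_k}^3 \ge 6$), and trade two blocks of $\Par$ for three disjoint blocks. The only difference is your case $j=k$, which the paper implicitly rules out: since the pair $\{e,f\}$ already lies in the block $C$, it cannot also lie in a block of $\Par$ (the packing property), so that case is vacuous --- though your handling of it is correct and harmless.
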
 

\begin{proof}
First, suppose there is a block $B = \{ e, f, x, y\} \in \mathcal{C}$, where $x, y \in T$ and $e, f \in P
\setminus A$. Assume that $e \in B_i$ and $f \in B_j$, where $i < j$. Since $t_{a_1}^3 \le
t_{a_2}^3 \le \cdots \le t_{a_\rho}^3,$ we have $t_{a_i}^3 \ge 3$ and $t_{a_j}^3 \ge 6$.

Since $t_{a_i}^3 \ge 3$, we can choose a block $B'_1 \in \mathcal{T}_{a_i}^3$ that 
is disjoint from $B$ (we just choose $B'_1 \in \mathcal{T}_{a_i}^3$ such that  $x, y \not\in B$).
Similarly, since $t_{a_j}^3 \ge 6$, we can choose a block 
$B_2' \in \mathcal{T}_{a_j}^3$ that is disjoint from $B$ and $B_1'$ (note that $B$ and $B_1'$ contain five points from $T$).
Then, deleting $B_i$ and $B_j$ from the PPC and  adjoining $B$, $B_1'$ and $B_2'$,  we
obtain $\rho + 1$ disjoint blocks, which is a contradiction. 
\end{proof}

\begin{Lemma}\label{l.emptyE}
If $\pc\geq 3$, $t_{a_1}^3 \ge 2$, $t_{a_2}^3 \ge 5$ and  $t_{a_3}^3 \ge 8$, then $\mathcal{E} = \emptyset$.
\end{Lemma}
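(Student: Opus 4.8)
The plan is to mimic the augmentation argument used for Lemma \ref{l.emptyC}, now removing three parallel-class blocks and adjoining four blocks. Suppose, for contradiction, that $\mathcal{E} \neq \emptyset$, and fix a block $B = \{e,f,g,z\} \in \mathcal{E}$ with $z \in T$ and $e,f,g \in P \setminus A$. First I would observe that $e$, $f$, $g$ must lie in three \emph{distinct} blocks of $\Par$: if two of them, say $e$ and $f$, belonged to a common block $B_\ell \in \Par$, then the pair $\{e,f\}$ would occur both in $B_\ell$ and in $B$, contradicting the packing property. Writing the indices of the parallel-class blocks containing $e,f,g$ as $p < q < r$, we have $p \geq 1$, $q \geq 2$, $r \geq 3$ (this is where $\pc \geq 3$ is needed), so the monotonicity $t_{a_1}^3 \le t_{a_2}^3 \le \cdots$ together with the hypotheses gives $t_{a_p}^3 \geq t_{a_1}^3 \geq 2$, $t_{a_q}^3 \geq t_{a_2}^3 \geq 5$ and $t_{a_r}^3 \geq t_{a_3}^3 \geq 8$.

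Next I would greedily select three blocks $B_1' \in \mathcal{T}_{a_p}^3$, $B_2' \in \mathcal{T}_{a_q}^3$, $B_3' \in \mathcal{T}_{a_r}^3$ that are pairwise disjoint and disjoint from $B$. The key combinatorial fact is that any two blocks sharing the common point $a_i$ have disjoint intersections with $T$ (otherwise they would share a second point), so each already-committed point of $T$ can eliminate at most one candidate from a given $\mathcal{T}_{a_i}^3$. Since $B$ contributes only the single point $z$ of $T$ and $t_{a_p}^3 \geq 2$, I can pick $B_1'$ avoiding $z$; since $B \cup B_1'$ contains $1+3 = 4$ points of $T$ and $t_{a_q}^3 \geq 5$, I can pick $B_2'$ avoiding those four points; and since $B \cup B_1' \cup B_2'$ contains $1+3+3 = 7$ points of $T$ and $t_{a_r}^3 \geq 8$, I can pick $B_3'$ avoiding those seven. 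Each $B_s'$ meets $P$ only in $a_p$, $a_q$ or $a_r$ respectively, none of which lies in $\{e,f,g\}$, so the four blocks $B, B_1', B_2', B_3'$ are pairwise disjoint.

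Finally I would delete $B_p$, $B_q$, $B_r$ from $\Par$ and adjoin $B, B_1', B_2', B_3'$. Every point of $P$ used by these four new blocks, namely $e,f,g$ together with $a_p,a_q,a_r$, lies in $B_p \cup B_q \cup B_r$, so the new blocks are disjoint from the remaining $\pc - 3$ blocks of $\Par$; hence we obtain a partial parallel class of size $\pc - 3 + 4 = \pc + 1$, contradicting the maximality of $\Par$. I expect the bookkeeping in the second paragraph to be the only delicate point: one must track precisely how many points of $T$ have been committed at each stage and confirm the ``at most one candidate eliminated per committed point'' principle, which is exactly what forces the thresholds $2, 5, 8$ (one more than $1, 4, 7$). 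The distinctness observation and the disjointness from the untouched parallel-class blocks are then routine.
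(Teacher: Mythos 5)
Your proposal is correct and follows essentially the same route as the paper, which simply says ``similar to the proof of Lemma \ref{l.emptyC}'': a greedy selection of $B_1' \in \mathcal{T}_{a_p}^3$, $B_2' \in \mathcal{T}_{a_q}^3$, $B_3' \in \mathcal{T}_{a_r}^3$ avoiding the $1$, $4$, and $7$ committed points of $T$ (which is exactly what the thresholds $2,5,8$ provide), followed by the swap yielding a PPC of size $\pc+1$. Your write-up is in fact more complete than the paper's, since you justify the distinctness of the three host blocks via the packing property and verify disjointness in $P$ explicitly.
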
 

\begin{proof}
Suppose there is a block $B = \{ b, c, d, x\} \in \mathcal{E}$, where $b, c, d  \in P\setminus A $, $b \in B_i$, $c \in B_j$, $d \in B_k$, and $x \in T$. Similar to the proof of Lemma \ref{l.emptyC}, 
 we can find $B_1' \in \mathcal{T}_{a_i}^3$, $B_2' \in \mathcal{T}_{a_j}^3$ and $B_3' \in \mathcal{T}_{a_k}^3$ such that $B, B'_1,   B_2', B_3'$ are disjoint. If we delete $B_i, B_j$ and $B_k$ from the PPC and adjoin $B, B'_1,   B_2'$ and $B_3'$,
we obtain $\rho + 1$ disjoint blocks, which is a contradiction.
\end{proof}

\begin{Lemma}\label{l.emptyF}
If $\pc\geq 4$, $t_{a_1}^3 \ge 1$, $t_{a_2}^3 \ge 4$, $t_{a_3}^3 \ge 7$ and $t_{a_4}^3 \ge 10$, then $\mathcal{F} = \emptyset$.
\end{Lemma}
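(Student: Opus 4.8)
The plan is to mimic the proofs of Lemmas~\ref{l.emptyC} and~\ref{l.emptyE}, now exchanging four PPC blocks for five. Suppose, for contradiction, that there is a block $B = \{b,c,d,e\} \in \mathcal{F}$, so that $b,c,d,e \in P \setminus A$. Because any two distinct blocks of a packing meet in at most one point, $B$ can share at most one point with each $B_m \in \Par$; hence $b,c,d,e$ lie in four \emph{distinct} PPC blocks, say $B_i, B_j, B_k, B_l$ with $i<j<k<l$, and none of $b,c,d,e$ is any $a_m$. Using the ordering $t_{a_1}^3 \le \cdots \le t_{a_\pc}^3$ together with $i \ge 1$, $j \ge 2$, $k \ge 3$, $l \ge 4$, the hypotheses give $t_{a_i}^3 \ge 1$, $t_{a_j}^3 \ge 4$, $t_{a_k}^3 \ge 7$ and $t_{a_l}^3 \ge 10$.

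Next I would greedily select blocks $B_1' \in \mathcal{T}_{a_i}^3$, $B_2' \in \mathcal{T}_{a_j}^3$, $B_3' \in \mathcal{T}_{a_k}^3$ and $B_4' \in \mathcal{T}_{a_l}^3$ whose three-point traces in $T$ are pairwise disjoint. The key mechanism (already used in Lemma~\ref{l.emptyC}) is that two distinct blocks of $\mathcal{T}_x^3$ can only meet in $x$, so the $T$-triples of the blocks in $\mathcal{T}_x^3$ are pairwise disjoint; consequently any set of $m$ forbidden points of $T$ rules out at most $m$ of these blocks. Since $B$ itself contains no point of $T$, choosing $B_1'$ requires avoiding $0$ points, choosing $B_2'$ requires avoiding the $3$ points of $B_1'$, choosing $B_3'$ the $6$ points of $B_1' \cup B_2'$, and choosing $B_4'$ the $9$ points of $B_1' \cup B_2' \cup B_3'$. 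These are exactly the budgets guaranteed by the thresholds $1, 4, 7, 10$.

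Finally, I would delete $B_i, B_j, B_k, B_l$ from $\Par$ (possible since $\pc \ge 4$) and adjoin $B, B_1', B_2', B_3', B_4'$. The resulting collection has $\pc - 4 + 5 = \pc + 1$ blocks, and I would verify it is a PPC: the new blocks are pairwise $T$-disjoint by construction, their only points in $P$ are $b,c,d,e,a_i,a_j,a_k,a_l$, which are eight distinct points all confined to $B_i \cup B_j \cup B_k \cup B_l$, hence disjoint from every surviving PPC block. This contradicts the maximality of the PPC, so $\mathcal{F} = \emptyset$.

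The routine part is the bookkeeping; the step I expect to require the most care in stating cleanly is the disjointness verification — in particular confirming that $b,c,d,e$ genuinely sit in four different PPC blocks and are distinct from the $a_m$, and that the greedy $T$-avoidance budgets line up exactly with the four hypothesized thresholds. Everything else parallels the earlier empty-set lemmas.
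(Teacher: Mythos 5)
Your proposal is correct and follows essentially the same exchange argument as the paper: locate the four PPC blocks meeting $B$, use the sorted thresholds $1,4,7,10$ to greedily pick pairwise-disjoint blocks from $\mathcal{T}_{a_i}^3, \mathcal{T}_{a_j}^3, \mathcal{T}_{a_k}^3, \mathcal{T}_{a_\ell}^3$, and trade four PPC blocks for five to contradict maximality. In fact, your write-up fills in details the paper leaves implicit (that $b,c,d,e$ lie in four distinct PPC blocks, and the exact counting behind the greedy $T$-avoidance), since the paper's own proof simply defers to the argument of Lemma~\ref{l.emptyC}.
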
 

\begin{proof}
Suppose there is a block $B = \{ b, c, d, e\} \in \mathcal{F}$, where $b, c, d ,e \in P\setminus A $, $b \in B_i$, $c \in B_j$, $d \in B_k$, and $e \in B_{\ell}$. Similar to the proof of Lemma \ref{l.emptyC}, we can find $B_1' \in \mathcal{T}_{a_i}^3$, $B_2' \in \mathcal{T}_{a_j}^3$, $B_3' \in \mathcal{T}_{a_k}^3$ and $B_4' \in \mathcal{T}_{a_{\ell}}^3$ such that $B, B'_1,   B_2', B_3', B_4'$ are disjoint. If we delete $B_i, B_j$, $B_k$ and $B_{\ell}$ from the PPC and adjoin $B, B'_1,   B_2', B_3'$ and $B_4'$, 
we obtain $\rho + 1$ disjoint blocks, which is a contradiction.
\end{proof}

 \begin{Lemma}\label{l.nC}
 Suppose $\rho \geq 4$, $t_{a_1}^3 \ge 3$, $t_{a_2}^3 \ge 6$, $t_{a_3}^3 \ge 8$ and $t_{a_4}^3 \ge 10$, and $v \ge 12\rho + 28$. Then the number of blocks  in the packing is at most 
\[
{\rho v \over 3} -  { \rho(\rho + 1)\over 6}  
\]
\end{Lemma}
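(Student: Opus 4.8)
The plan is to show that the hypotheses have been chosen precisely so that the three ``exceptional'' block sets $\mathcal{C}$, $\mathcal{E}$ and $\mathcal{F}$ all vanish, after which the global count (\ref{total.eq}) collapses to a clean sum of the quantities $L_i$, each of which is already controlled by Lemma \ref{r.1}.

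First I would invoke the three emptiness lemmas. The assumptions $t_{a_1}^3 \ge 3$, $t_{a_2}^3 \ge 6$ together with $\rho \ge 4 \ge 2$ are exactly the hypotheses of Lemma \ref{l.emptyC}, so $\mathcal{C} = \emptyset$. Since $t_{a_1}^3 \ge 3 \ge 2$, $t_{a_2}^3 \ge 6 \ge 5$, $t_{a_3}^3 \ge 8$ and $\rho \ge 3$, Lemma \ref{l.emptyE} gives $\mathcal{E} = \emptyset$. Finally, $t_{a_1}^3 \ge 1$, $t_{a_2}^3 \ge 6 \ge 4$, $t_{a_3}^3 \ge 8 \ge 7$, $t_{a_4}^3 \ge 10$ and $\rho \ge 4$ are the hypotheses of Lemma \ref{l.emptyF}, so $\mathcal{F} = \emptyset$. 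The given thresholds on $t_{a_1}^3, \dots, t_{a_4}^3$ are exactly the coordinatewise maximum of the three lists of thresholds required by these lemmas, which is why all three sets can be eliminated simultaneously.

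Next I would rewrite (\ref{total.eq}). Since every point of $P$ lies in exactly one block of $\mathcal{P}$, we have $\sum_{x \in P}|\mathcal{T}_x^3| = \sum_{i=1}^\rho (t_{a_i}^3 + t_{b_i}^3 + t_{c_i}^3 + t_{d_i}^3)$, and combining this with the definition of $L_i$ gives $\sum_{x \in P}|\mathcal{T}_x^3| + \sum_{i=1}^\rho(\alpha_i + \epsilon_i + \zeta_i) = \sum_{i=1}^\rho L_i$. Substituting $\mathcal{C} = \mathcal{E} = \mathcal{F} = \emptyset$ into (\ref{total.eq}) then yields $b = \sum_{i=1}^\rho L_i + \rho$.

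Finally, since $v \ge 12\rho + 28$, Lemma \ref{r.1} applies and gives $L_i \le (v - i - 3)/3$ for each $i$ via (\ref{f.1}). Summing over $i$ and using $\sum_{i=1}^\rho i = \rho(\rho+1)/2$ gives $b \le \frac{1}{3}\left(\rho v - \frac{\rho(\rho+1)}{2} - 3\rho\right) + \rho = \frac{\rho v}{3} - \frac{\rho(\rho+1)}{6}$, which is the claimed bound. I do not anticipate a genuine obstacle here: all the substantive combinatorial work is already carried out in Lemmas \ref{l.emptyC}--\ref{r.1}, so the only things to verify are that the stated thresholds dominate those required by each emptiness lemma and that the telescoping arithmetic is correct.
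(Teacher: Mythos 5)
Your proposal is correct and follows essentially the same route as the paper's own proof: invoke Lemmas \ref{l.emptyC}, \ref{l.emptyE} and \ref{l.emptyF} to conclude $\mathcal{C} = \mathcal{E} = \mathcal{F} = \emptyset$, reduce (\ref{total.eq}) to $b = \sum_{i=1}^{\rho} L_i + \rho$, and sum the bounds $L_i \le (v-i-3)/3$ from Lemma \ref{r.1}. Your explicit verification that the thresholds $(3,6,8,10)$ are the coordinatewise maximum of the three lemmas' requirements, and your justification of the identity $\sum_{x \in P}|\mathcal{T}_x^3| + \sum_i (\alpha_i + \epsilon_i + \zeta_i) = \sum_i L_i$, merely spell out details the paper leaves implicit.
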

\begin{proof}
 Let $M = \sum_{i = 1}^\rho L_i$. From Lemma \ref{r.1}, we have 
   \begin{eqnarray*}
 M&\le&\sum_{i = 1}^\rho { ( v - i - 3)\over 3}  \\
 &=&{\rho  ( v - 3)\over 3} - {\rho(\rho +1)\over 6} \\
 &=&{ \rho v\over 3}  - { \rho^2+ 7\rho\over 6}.
 \end{eqnarray*}
  Since $\mathcal{C}\cup \mathcal{E}  \cup \mathcal{F} = \emptyset$ by Lemmas \ref{l.emptyC}, \ref{l.emptyE} and \ref{l.emptyF},   the total number of blocks is 
\begin{eqnarray*}
M + \rho  &\leq & { \rho v\over 3}  - { \rho^2+ 7\rho\over 6}  + \rho \\
& = & { \rho v\over 3}  - {\rho( \rho+ 1)\over 6}.
\end{eqnarray*}
\end{proof}
  
Lemma \ref{l.nC} provides a good upper bound on the number of blocks when the four smallest values $t_{a_i}$ are large enough, because the conditions ensure there are no blocks in 
$\mathcal{C}\cup \mathcal{E}  \cup \mathcal{F}$. On the other hand, if even one of these four values is ``small,'' then we will obtain a bound on the number of blocks by upper-bounding the relevant $t_{a_i}$ by a quantity that is independent of $v$. In this situation, we will just use a trivial upper bound on the number of blocks in $\mathcal{C}\cup \mathcal{E}  \cup \mathcal{F}$.


\begin{Lemma}\label{l.C}
Suppose $t_{a_1}^3 < 3$, $t_{a_2}^3 < 6$, $t_{a_3}^3 < 8$ or $t_{a_4}^3 < 10$.
If   $v \ge 12\rho +28$,
 then the number of blocks in   the packing is at most
\[
 { (\rho -1)v\over 3} +  {\rho(13\rho- 2)\over 3}   + 10
\]
\end{Lemma}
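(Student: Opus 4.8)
The plan is to combine the block-counting identity~(\ref{total.eq}) with the per-block estimates already in place. Writing $L_i=t_{a_i}^3+t_{b_i}^3+t_{c_i}^3+t_{d_i}^3+\alpha_i+\epsilon_i+\zeta_i$ and using $\sum_{x\in P}|\mathcal{T}_x^3|=\sum_{i=1}^{\rho}(t_{a_i}^3+t_{b_i}^3+t_{c_i}^3+t_{d_i}^3)$, identity~(\ref{total.eq}) becomes
\[ b=\sum_{i=1}^{\rho}L_i+|\mathcal{C}|+|\mathcal{E}|+|\mathcal{F}|+\rho. \]
I would bound the two middle groups separately: a single global pair-count for $|\mathcal{C}|+|\mathcal{E}|+|\mathcal{F}|$, and the bounds of Lemma~\ref{r.1} for the $L_i$, improved on one index so as to shave off a factor of $v/3$. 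Throughout I assume $\rho\ge4$, as in Lemma~\ref{l.nC}.

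First I would bound $|\mathcal{C}|+|\mathcal{E}|+|\mathcal{F}|$ by double counting the pairs inside $P\setminus A$. A block of $\mathcal{C}$, $\mathcal{E}$, or $\mathcal{F}$ contains exactly $2$, $3$, or $4$ points of $P\setminus A$, contributing $1$, $3$, or $6$ pairs; by the packing property all these pairs are distinct and disjoint from the $3\rho$ pairs already used by the restricted PPC blocks $\{b_i,c_i,d_i\}$, so
\[ |\mathcal{C}|+3|\mathcal{E}|+6|\mathcal{F}|\le\binom{3\rho}{2}-3\rho=\frac{9\rho(\rho-1)}{2}, \]
whence $|\mathcal{C}|+|\mathcal{E}|+|\mathcal{F}|\le\frac{9\rho(\rho-1)}{2}$. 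Next, Lemma~\ref{r.1} gives $L_i\le(v-i-3)/3$ for every $i$, while whenever $t_{a_i}^3\le c$ one also has the $v$-free estimate $L_i\le 4c+(4\rho-i-3)$, using $t_{a_i}^3\ge\max\{t_{b_i}^3,t_{c_i}^3,t_{d_i}^3\}$ together with~(\ref{f.C}). Replacing the generic bound by this $v$-free one on a single index $j$ drops the leading term of $\sum_iL_i$ from $\rho v/3$ to $(\rho-1)v/3$; moreover the $\rho^2$-contributions $\tfrac{9\rho^2}{2}$ (from the pairs) and $-\tfrac{\rho^2}{6}$ (from $\sum_{i\ne j}(v-i-3)/3$) combine to exactly $\tfrac{13\rho^2}{3}$, which matches the claimed coefficient.

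The hypothesis guarantees some $j\in\{1,2,3,4\}$ with $t_{a_j}^3$ small, so I would organize the argument around the smallest index $m$ for which the corresponding inequality of Lemma~\ref{l.nC} fails. For $m=1$ we have $t_{a_1}^3\le2$; saving the index $j=1$ with $c=2$ and using the full pair-count above yields the stated bound directly (the additive constant works out to $16/3<10$), and this is the extremal case pinning down the $\tfrac{13\rho^2}{3}$ term. For $m=3$ and $m=4$ the earlier inequalities all hold, so Lemmas~\ref{l.emptyC} and~\ref{l.emptyE} force $\mathcal{C}$ (and, for $m=4$, also $\mathcal{E}$) to be empty; the resulting collapse of the $\rho^2$-term more than absorbs the larger additive constant coming from saving at $j=3$ or $j=4$, so the bound then holds with considerable room for $\rho\ge4$.

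The hard part will be the remaining case $m=2$, where $t_{a_1}^3\ge3$ but $t_{a_2}^3\le5$: here no emptiness lemma applies, and naively saving one $L_j$ against the full pair-count overshoots the additive constant $10$. To handle it I would re-run the removability arguments of Lemmas~\ref{l.emptyC}--\ref{l.emptyF} in refined form. Since $t_{a_1}^3\ge3$, the first rescue block is always available, so every surviving block of $\mathcal{C}$ (and, with the appropriate thresholds, of $\mathcal{E}$ and $\mathcal{F}$) must have its points of $P\setminus A$ lying in PPC blocks $B_\ell$ whose value $t_{a_\ell}^3$ is below the relevant threshold; thus $\mathcal{C},\mathcal{E},\mathcal{F}$ are confined to the ``small'' PPC blocks, and their sizes are controlled by the number $s$ of such blocks rather than by $\rho$. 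The point of the argument is then to balance the $v$-free savings available on all $s$ small indices against this confined count. I expect the genuinely delicate step to be checking that this balance yields the constant $10$ uniformly over the whole range $v\ge12\rho+28$, rather than merely asymptotically, and this is where the bookkeeping will be heaviest.
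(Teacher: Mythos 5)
Your setup coincides with the paper's own proof: you rewrite identity (\ref{total.eq}) as $b=\sum_{i=1}^{\rho}L_i+|\mathcal{C}|+|\mathcal{E}|+|\mathcal{F}|+\rho$, bound $|\mathcal{C}|+|\mathcal{E}|+|\mathcal{F}|\le\binom{3\rho}{2}-3\rho=\frac{9\rho(\rho-1)}{2}$ by the same pair count, apply Lemma \ref{r.1} to all but one index $i\le 4$, and make a $v$-free saving at that index. Your cases $m=1$, $m=3$ and $m=4$ check out (for $m=3,4$ you route through Lemmas \ref{l.emptyC} and \ref{l.emptyE}, which the paper does not need in this lemma). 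But your case $m=2$ is a genuine gap: you do not prove it, you only outline a ``refined removability'' scheme whose balancing and bookkeeping you explicitly leave open, so the proposal is incomplete as written.

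Moreover, the obstruction you perceive at $m=2$ is an artifact of your $v$-free estimate $L_i\le 4c+(4\rho-i-3)$, which quadruples the cap $c$ on $t_{a_i}^3$. The correct estimate uses the dichotomy of Lemma \ref{l-t_w}: since $t_{a_i}^3\ge\max\{t_{b_i}^3,t_{c_i}^3,t_{d_i}^3\}$, either $t_{a_i}^3\ge4$, in which case $t_{b_i}^3=t_{c_i}^3=t_{d_i}^3=0$ and the four $t^3$-terms sum to at most $3i-1\le11$, or $t_{a_i}^3\le3$ and they sum to at most $12$; in either case
\[
L_i\le 12+(4\rho-i-3)
\]
whenever $t_{a_i}^3\le 3i-1$. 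Running your own $m=1$ computation with this bound at $j=2$, against the \emph{full} pair count, gives the additive constant $\frac{26}{3}<10$; indeed, uniformly over $i\in\{1,2,3,4\}$ the constant is $10-\frac{2i}{3}<10$. This uniform argument --- choose $i\le4$ with $t_{a_i}^3\le 3i-1$, split on $t_{a_i}^3\ge4$ versus $t_{a_i}^3\le3$, sum $(v-j-3)/3$ over $j\ne i$, and add $\frac{9\rho(\rho-1)}{2}+\rho$ --- is exactly the paper's proof of Lemma \ref{l.C}, using no emptiness lemmas at all (those belong to Lemma \ref{l.nC}). So a one-line repair of your $v$-free bound closes the gap and collapses your four cases into one.
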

\begin{proof}
Let $M = \sum_{j = 1}^\rho L_j$. Since   $v \ge 12\rho + 28$, we can apply Lemma \ref{r.1}.
Choose $i \le 4$ such that $t_{a_i}^3 \leq 3i-1$ (at least one such value of $i$ exists).
 
 Suppose first that  $ t_{a_i}^3\ge 4$. Then  $t_{b_i}^3 = t_{c_i}^3 =t_{d_i}^3 =0$.   We have $L_j \leq {v -j - 3\over 3}$ for all $j$ from (\ref{f.1}). Also, 
 $L_i \leq 3i - 1 + 4\rho - i - 3$ from (\ref{f.C}).
Thus we have
 \begin{eqnarray*}
 M&\le&\sum_{j = 1, j \neq i}^{\rho}\left( 
  {v -j - 3\over 3}\right) +3i -1 +4\rho - i -3\\
 &=&{(\rho - 1)  (v - 3)\over 3} - {\rho(\rho +1)\over 6} + {i\over 3}+ 2i  +4\rho -4\\
 &\le &{(\rho - 1)  (v - 3)\over 3} - {\rho(\rho +1)\over 6} + {28\over 3} +4\rho -4 
 \quad \quad \text{since $i \leq 4$}\\
 &=& {(\rho - 1)  (v - 3)\over 3} - {\rho(\rho -23)\over 6} + {16\over 3}.
 \end{eqnarray*}
 On the other hand, if $t_{a_i} \le 3$, then $t_{a_i} + t_{b_i}+ t{c_i} + t_{d_i} \le 12$. So, by a similar argument, 
 we obtain
 \begin{eqnarray*}
 M&\le&\sum_{j = 1, j \neq i}^{\rho}\left( 
  {v -j - 3\over 3}\right) +12 +4\rho - i -3\\
 &=&{(\rho - 1)  (v - 3)\over 3} - {\rho(\rho +1)\over 6} + {i\over 3 } - i  +4\rho + 9\\
 &< &{(\rho - 1)  (v - 3)\over 3} - {\rho(\rho +1)\over 6} + 4\rho + 9
 \quad \quad \text{since $i > 0$}\\
 &=& {(\rho - 1)  (v - 3)\over 3} - {\rho(\rho -23)\over 6} + 9.
 \end{eqnarray*}
 
 Now, each block in $\mathcal{C}\cup \mathcal{E} \cup \mathcal{F}$ contains at least one pair of points from $P \setminus A$. Further,  none of these blocks contains more than one point from any block in $\mathcal{P}$.
Hence, \[|\mathcal{C}\cup \mathcal{E} \cup \mathcal{F}| \le {3\rho \choose 2} -3\rho = {9\rho (\rho -1)\over 2}.\]
Therefore, the total number of  blocks, $b$, satisfies the following inequality: 
 \begin{eqnarray*}
b &\le&  {(\rho - 1)  (v - 3)\over 3} - {\rho(\rho -23)\over 6}  +9 + {9\rho (\rho -1)\over 2} + \rho\\
 &=&{ (\rho -1)v\over 3} - (\rho -1)-  {\rho(\rho -23)\over 6} +9 + {9\rho (\rho -1)\over 2} + \rho \\
 &=& { (\rho -1)v\over 3} -  {\rho(\rho -23)\over 6} + {9\rho (\rho -1)\over 2}  + 10\\
 &=&{ (\rho -1)v\over 3} +  {\rho(13\rho -2)\over 3}  + 10.
  \end{eqnarray*}
 
 \end{proof}

 When $v$ is sufficiently large compared to $\rho$, the bound of Lemma \ref{l.nC} is the relevant bound.

 \begin{Theorem}\label{t.bound}
 Suppose $v \ge {1\over 2}  (27\rho^2 - 3\rho + 60)$. Then the number of
 blocks in  the packing is at most 
 \[
  {\rho v  \over 3} - {  \rho(\rho + 1)\over 6}.
 \] 
 \end{Theorem}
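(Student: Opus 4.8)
The plan is to combine the two complementary bounds already established, namely Lemma~\ref{l.nC} and Lemma~\ref{l.C}, which between them exhaust all possibilities. Given a $(v,4)$-packing with maximum PPC of size $\rho$ (and $\rho \geq 4$, as assumed throughout this subsection), exactly one of the following holds: either all four inequalities $t_{a_1}^3 \ge 3$, $t_{a_2}^3 \ge 6$, $t_{a_3}^3 \ge 8$, $t_{a_4}^3 \ge 10$ are satisfied, or at least one of them fails. The first situation is precisely the hypothesis of Lemma~\ref{l.nC}, and the second is precisely the hypothesis of Lemma~\ref{l.C}. So the entire argument is a two-case dichotomy driven by these two lemmas.

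First I would check that the hypothesis $v \ge \frac{1}{2}(27\rho^2 - 3\rho + 60)$ implies the common requirement $v \ge 12\rho + 28$ that is needed to invoke both lemmas. This reduces to the quadratic inequality $27\rho^2 - 27\rho + 4 \ge 0$, which holds for every $\rho \ge 1$ since $27\rho(\rho-1) \ge 0$. With this in hand, both lemmas are applicable under the theorem's hypothesis.

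In the first case, Lemma~\ref{l.nC} immediately yields that the number of blocks is at most $\frac{\rho v}{3} - \frac{\rho(\rho+1)}{6}$, which is exactly the claimed bound. In the second case, Lemma~\ref{l.C} gives the weaker-looking bound $\frac{(\rho-1)v}{3} + \frac{\rho(13\rho-2)}{3} + 10$, and the crux is to show that this does not exceed $\frac{\rho v}{3} - \frac{\rho(\rho+1)}{6}$. Subtracting and clearing denominators, the required inequality is equivalent to $2v \ge 27\rho^2 - 3\rho + 60$, i.e.\ exactly the hypothesis $v \ge \frac{1}{2}(27\rho^2 - 3\rho + 60)$. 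Thus in both cases the number of blocks is bounded above by $\frac{\rho v}{3} - \frac{\rho(\rho+1)}{6}$, completing the proof.

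There is no genuine obstacle here: the theorem is the clean synthesis of the two preceding lemmas, and the threshold $\frac{1}{2}(27\rho^2 - 3\rho + 60)$ has evidently been reverse-engineered to be exactly the crossover point at which the Case~2 bound of Lemma~\ref{l.C} drops below the Case~1 bound of Lemma~\ref{l.nC}. The only care required is in verifying the auxiliary inequality $v \ge 12\rho + 28$ and in carrying out the denominator-clearing comparison of the two bounds without arithmetic slips.
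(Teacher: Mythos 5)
Your proof is correct and is essentially identical to the paper's: the paper likewise takes the maximum of the bounds from Lemmas~\ref{l.nC} and~\ref{l.C} and shows their difference equals ${v\over 3} - {\rho(27\rho-3)\over 6} - 10 \ge 0$ exactly under the hypothesis $v \ge {1\over 2}(27\rho^2 - 3\rho + 60)$, which is the same crossover computation you describe. Your additional check that the hypothesis implies $v \ge 12\rho + 28$ (needed to invoke both lemmas) is a point the paper leaves implicit, so nothing is missing.
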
 
\begin{proof}
From Lemmas \ref{l.nC} and \ref{l.C}, we have
\[b \leq  \max \left\{ {\rho v  \over 3} - {  \rho(\rho + 1)\over 6}, {(\rho -1)v\over 3} +  {\rho(13\rho- 2)\over 3}   + 10 \right\} .\]
Since $v \ge {1\over 2}(  27\rho^2 - 3\rho + 60)$, we have 
\begin{eqnarray*}
{\rho v  \over 3} - {  \rho(\rho + 1)\over 6} - \left( { (\rho -1)v\over 3} +  {\rho(13\rho- 2)\over 3}   + 10\right )
&=&{v\over 3}  -    {\rho(27\rho - 3)\over 6}   -10 \\
&\ge & 0.
\end{eqnarray*}
 \end{proof}

\section{Some Values of $\beta(\rho, v,4)$}

In this section, we  determine some exact values of $\beta(\rho, v, 4)$.
First, for $\rho = 1$, we can determine the exact values of $\beta (1, v, 4)$ for all $v$.

 \begin{Theorem}
 \[
 \beta (1, v, 4) = \left\{
 \begin{array}{cl}
 D(v ,4) &\mbox{ if $ 4 \le v \le 13$ }\\
 13&\mbox{ if $14\le v \le 39$}\\
 \lfloor {v -1 \over 3}\rfloor& \mbox{ if $v \ge 40$}
 \end{array}
 \right.
 \] 
 \end{Theorem}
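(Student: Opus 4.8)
The plan is to translate the condition ``the largest PPC has size $1$'' into a set-theoretic statement and then pair a structural dichotomy with explicit constructions. First I would observe that a $(v,4)$-packing has maximum PPC of size $1$ precisely when its block set is nonempty and no two blocks are disjoint; since the packing axiom already forces $|B\cap B'|\le 1$, this says exactly that the blocks form a family of $4$-sets pairwise meeting in \emph{exactly} one point. Hence $\beta(1,v,4)$ is the maximum size of such an intersecting family on $v$ points, and trivially $\beta(1,v,4)\le D(v,4)$.

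For the upper bound I would establish the dichotomy: either all blocks share a common point, or there are at most $13$ of them. If a point $p$ lies in $d(p)$ blocks, those blocks pairwise meet only at $p$ (two blocks through $p$ share at most one point by the packing condition, and they do share $p$), so their ``petals'' $B\setminus\{p\}$ are pairwise disjoint $3$-sets; any block $C$ avoiding $p$ must then meet each petal in a \emph{distinct} point of $C$, forcing $d(p)\le 4$. Thus if some $d(p)\ge 5$ the family is a pencil through $p$. Otherwise, fixing any block $A$ and noting that each of the other $|\mathcal B|-1$ blocks meets $A$ in exactly one of its $4$ points, pigeonhole produces a point of $A$ of degree $\ge 5$ as soon as $|\mathcal B|\ge 4^2-4+2=14$. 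Therefore a non-pencil family has at most $13$ blocks, while a pencil has at most $\lfloor (v-1)/3\rfloor$ blocks (its petals are disjoint triples in $X\setminus\{p\}$). Combining with $\beta\le D(v,4)$ gives $\beta(1,v,4)\le \min\{D(v,4),\ \max\{13,\lfloor(v-1)/3\rfloor\}\}$, and a short case check confirms this equals the three claimed values: $D(v,4)$ when $4\le v\le 13$ (as $D(v,4)\le D(13,4)=13$), $13$ when $14\le v\le 39$ (as $\max\{13,\lfloor(v-1)/3\rfloor\}=13\le D(v,4)$), and $\lfloor(v-1)/3\rfloor$ when $v\ge 40$. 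For $v\ge 40$ the same upper bound $\lfloor(v-1)/3\rfloor$ also drops out of Theorem \ref{bound-improved} with $\rho=1$.

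For the matching lower bounds I would give one construction per regime. For $v\ge 40$, take a point $p$ together with $\lfloor(v-1)/3\rfloor$ pairwise disjoint triples on $X\setminus\{p\}$; this pencil is intersecting and attains the bound. For $14\le v\le 39$, place a projective plane $PG(2,3)$ (a $(13,4,1)$-BIBD whose $13$ lines pairwise meet in one point) on any $13$ of the $v$ points, yielding $13$ intersecting blocks. For $4\le v\le 13$ I would exhibit, for each $v$, an intersecting packing with exactly $D(v,4)$ blocks: $PG(2,3)$ itself for $v=13$; the lines of $PG(2,3)$ missing one, respectively two, deleted points for $v=12$ (giving $9$ blocks) and $v=11$ (giving $6$ blocks); the ``edges of $K_5$'' design (points are the $10$ edges, each block is the set of $4$ edges at a vertex) for $v=10$; and small ad hoc configurations for $v\le 9$, e.g.\ three $4$-sets meeting pairwise in distinct single points for $v=9$, two $4$-sets sharing a point for $v=7,8$, and a single block for $v\le 6$.

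The main obstacle is the range $4\le v\le 13$. There the target is $D(v,4)$ rather than a clean formula, and, crucially, a general optimal packing need not be intersecting, so one must produce an \emph{intersecting} packing that still attains the packing number. This forces an essentially case-by-case verification tying each construction to the value of $D(v,4)$ supplied by the packing-number formula: the degree/pigeonhole argument giving the upper bound is routine, but checking that the truncated-plane and graph-based families are not merely intersecting but genuinely optimal for each small $v$ is the delicate point.
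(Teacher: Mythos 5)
Your proof is correct, and it matches all the claimed values, but it takes a genuinely different route from the paper on the upper-bound side. The paper obtains both thresholds by specializing its general first bound: Theorem \ref{t.bound1} with $\pc = 1$ (whose engine is Lemma \ref{l-t_w}) gives $\beta(1,v,4) \le 1 + \max\left\{12, \left\lfloor \frac{v-4}{3}\right\rfloor\right\}$, i.e., $13$ for $v \le 40$ and $\left\lfloor \frac{v-1}{3}\right\rfloor$ for $v \ge 40$. You instead prove a self-contained dichotomy for intersecting families of $4$-sets: a point of degree at least $5$ forces a pencil (of size at most $\left\lfloor \frac{v-1}{3}\right\rfloor$, by disjointness of the petals), while maximum degree at most $4$ plus pigeonhole on a fixed block caps the family at $13$. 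This is essentially the content of Lemma \ref{l-t_w} made explicit and sharpened for $\pc = 1$, with the added bonus that your proof of this theorem becomes independent of Section 3; your observation that Theorem \ref{bound-improved} with $\pc=1$ also yields $(v-1)/3$ for $v \ge 40$ is likewise correct. On the lower-bound side, your pencil (a point plus $\left\lfloor \frac{v-1}{3}\right\rfloor$ disjoint triples) replaces the paper's invocation of the MOLS-based Theorems \ref{t.con}, \ref{t2.con} and \ref{t1.con}, and gives exactly the same count since for $\pc=1$ those theorems contribute $D(1,4)=0$ extra blocks anyway; your embedded $PG(2,3)$ for $14 \le v \le 39$ is the paper's monotonicity argument unwound; and your small cases agree in substance with Table \ref{tb.1} and the paper's $v=12,13$ constructions (your $K_5$-edge design for $v=10$ is isomorphic to the table's packing: each point there lies in exactly two blocks and any two blocks meet in one point). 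Your case checks — $D(v,4)\le D(13,4)=13$ for $v\le 13$, $D(v,4)\ge 13$ for $v\ge 14$, and the truncated-plane counts $13-4=9$ and $13-4-4+1=6$ matching $D(12,4)$ and $D(11,4)$ — are all accurate; what the paper's route buys in exchange for your more elementary argument is reuse of machinery that also handles $\pc \ge 2$.
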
  
\begin{proof}
When $4\le v \le 6$, $D(v, 4) = 1$, so $\beta(1, v, 4) = 1$. For $7 \le v \le 11$, we display the blocks of the optimal packings in Table \ref{tb.1}. These packings do not contain any disjoint blocks. For $v =12$, we have $D(12,4) = 9$ and the optimal packing is obtained by deleting a point $x$ and the four blocks containing $x$ from a projective plane of order $3$. This packing also does not contain disjoint blocks.  For $v =13$, we have $D(13,4) = 13$ and the optimal packing is a projective plane of order $3$, which does not contain disjoint blocks.

\begin{table}
\caption{Some small $(v,4)$-packings}\label{tb.1}
\[
\begin{array}{|c|c|cccccc|}
\hline
 v&D(v,4)&&&&&&\\
 \hline
 7&2&1,2,3,4&1,5,6,7&&&&\\
 \hline
 8&2&1,2,3,4&1,5,6,7&&&&\\
 \hline
 9&3& 1,2,3,4&1,5,6,7&2,5,8,9&&&\\ 
 \hline 
 10&5&1,2,3,4&1,5,6,7&2,5,8,9& 3,6,8,10&4,7,9,10&\\
 \hline
11&6&1,2,3,4&1,5,6,7&1,8,9,10&2,5,8,11&3,6,9,11&4,7,10,11\\
\hline
\end{array}
\]
\end{table}

For $13\leq v\leq 40$, Theorem \ref{t.bound1} 
gives the bound $\beta(1,v,4) \leq 13 $, and  
for $v \ge 40$, Theorem \ref{t.bound1} 
gives the bound $\beta(1,v,4) \leq\lfloor {v - 1\over 3}\rfloor$.
For $v \geq 40$, Theorems \ref{t.con}, \ref{t2.con} and \ref{t1.con} provide the desired packings. 
For $14\le v \le 39$, 
 $\beta( 1, v, 4) = 13$ because 
 \[13 = \beta (1, 13, 4) \leq \beta (1, v, 4) \le \beta(1, 40, 4)  = 13.\]
\end{proof}

For other small values of $\rho$, our constructions also give packings in which the number of blocks is
very close to the upper  bound from Theorem \ref{t.bound}.

\begin{Theorem}\label{t.beta2}
Suppose $v \ge 81$.
 Then  
 \[ \beta(2, v, 4) \leq \left\lfloor {2v- 3\over 3} \right\rfloor.\]
   Also, \[ \beta(2, v, 4) \ge \begin{cases}
 {2v-6\over 3} &\mbox{ if $v \equiv 0 \bmod 3$}\\
 {2v-8\over 3} &\mbox{ if $v \equiv 1 \bmod 3$}\\
 {2v-4\over 3} &\mbox{ if $v \equiv 2 \bmod 3$}.
\end{cases}
\]
\end{Theorem}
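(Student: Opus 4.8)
The plan is to read off the upper bound directly from Theorem \ref{t.bound} and each of the three lower bounds directly from the construction Theorems \ref{t.con}, \ref{t2.con} and \ref{t1.con}, all specialized to $\rho = 2$; the statement is essentially an assembly of these earlier results.

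For the upper bound, I would first observe that the hypothesis $v \geq 81$ is \emph{exactly} the threshold $v \geq \frac{1}{2}(27\rho^2 - 3\rho + 60)$ of Theorem \ref{t.bound} in the case $\rho = 2$, since $\frac{1}{2}(27\cdot 4 - 6 + 60) = \frac{1}{2}\cdot 162 = 81$. Applying that theorem with $\rho = 2$ bounds the number of blocks by $\frac{2v}{3} - \frac{2\cdot 3}{6} = \frac{2v}{3} - 1 = \frac{2v-3}{3}$, and since the block count is an integer this gives $\beta(2,v,4) \leq \lfloor (2v-3)/3 \rfloor$.

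For the lower bound, I would split into three cases by the residue of $v$ modulo $3$, using throughout that $D(2,4) = D(3,4) = 0$ (a set of size $2$ or $3$ has no $4$-subset). When $v \equiv 2 \bmod 3$, write $v = 3n+2$ and apply Theorem \ref{t.con} with $\rho = 2$, yielding a $(v,4)$-packing with $2n + D(2,4) = 2n = \frac{2(v-2)}{3} = \frac{2v-4}{3}$ blocks and maximum PPC of size $2$. When $v \equiv 1 \bmod 3$, write $v = 3n+1$ and apply Theorem \ref{t2.con} with $\rho = 2$, obtaining $2(n-1) + D(2,4) = 2n-2 = \frac{2v-8}{3}$ blocks. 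When $v \equiv 0 \bmod 3$, write $v = 3n+3$ and apply Theorem \ref{t1.con} with $\rho = 2$, obtaining $2n + D(3,4) = 2n = \frac{2(v-3)}{3} = \frac{2v-6}{3}$ blocks. In each case the count matches the claimed bound exactly.

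The only point requiring genuine checking — what passes for the ``main obstacle'' in an otherwise routine argument — is that the integer $n$ arising in each residue class meets the existence hypotheses $n \geq 4$, $n \neq 6$, together with the range restriction on $\rho$ ($\rho \leq n$ for Theorems \ref{t.con} and \ref{t1.con}, and $\rho \leq n-1$ for Theorem \ref{t2.con}). For $v \geq 81$ the smallest admissible value in each class gives $n = 26$ (when $v \equiv 0$), $n = 27$ (when $v \equiv 1$), and $n = 27$ (when $v \equiv 2$); all of these exceed $6$, and $n$ increases with $v$, so every condition holds across the whole range. I would note in passing that the bottleneck $v \geq 81$ is forced by the upper bound alone, being the exact hypothesis of Theorem \ref{t.bound} at $\rho = 2$; the constructions supplying the lower bounds are valid as soon as $n \geq 4$ and $n \neq 6$, hence well below this threshold.
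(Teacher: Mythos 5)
Your proposal is correct and follows exactly the same route as the paper: the upper bound is read off from Theorem \ref{t.bound}, whose hypothesis at $\rho=2$ is precisely $v\geq 81$, and the three lower bounds come from Theorems \ref{t.con}, \ref{t2.con} and \ref{t1.con} with $\rho=2$ and $D(2,4)=D(3,4)=0$. The paper's own proof is just a two-line citation of those four theorems; your write-up supplies the residue-class bookkeeping and parameter checks, all of which are accurate.
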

\begin{proof}
The upper bound follows from Theorem \ref{t.bound}. The lower bounds follow from 
Theorems \ref{t.con}, \ref{t2.con} and \ref{t1.con}.
\end{proof}

 \begin{Theorem}\label{t.beta3}
Suppose $v \ge 147$.
 Then      $ \beta(3, v, 4) \leq v-2$.
Also, \[ \beta(3, v, 4) \ge 
\begin{cases}
 v - 3 &\mbox{ if $v \equiv 0,1 \bmod 3$}\\
 v - 5 &\mbox{ if $v \equiv 2 \bmod 3$}.
\end{cases}
\]
\end{Theorem}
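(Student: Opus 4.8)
The plan is to mirror the proof of Theorem~\ref{t.beta2} verbatim, with $\rho = 2$ replaced by $\rho = 3$: the upper bound is a direct specialization of Theorem~\ref{t.bound}, and each of the three lower bounds is obtained by specializing one of the three construction theorems (Theorems~\ref{t.con}, \ref{t2.con}, \ref{t1.con}) to $\rho = 3$. For the upper bound I would first observe that the hypothesis $v \ge 147$ is exactly the instance $\rho = 3$ of the hypothesis $v \ge \frac{1}{2}(27\rho^2 - 3\rho + 60)$ of Theorem~\ref{t.bound}, since $\frac{1}{2}(27 \cdot 9 - 9 + 60) = \frac{1}{2} \cdot 294 = 147$. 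Substituting $\rho = 3$ into the bound $\frac{\rho v}{3} - \frac{\rho(\rho + 1)}{6}$ then gives $v - \frac{12}{6} = v - 2$, which is precisely the claimed $\beta(3, v, 4) \le v - 2$.

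For the lower bounds I would split into the three residue classes of $v$ modulo $3$ and apply the matching construction, using the two elementary packing numbers $D(3,4) = 0$ (a $3$-set has no $4$-subset) and $D(4,4) = 1$ (a $4$-set has exactly one $4$-subset). Concretely: if $v \equiv 0 \bmod 3$, write $v = 3n + 3$ and apply Theorem~\ref{t.con} with parameter $n$ and $\rho = 3$, obtaining $3n + D(3,4) = 3n = v - 3$ blocks; if $v \equiv 2 \bmod 3$, write $v = 3n + 2$ and apply Theorem~\ref{t2.con}, obtaining $3(n-1) + D(3,4) = 3n - 3 = v - 5$ blocks; and if $v \equiv 1 \bmod 3$, write $v = 3n + 4$ and apply Theorem~\ref{t1.con}, obtaining $3n + D(4,4) = 3n + 1 = v - 3$ blocks. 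In each case the constructed packing has largest PPC of size exactly $3$, so its block count is a valid lower bound for $\beta(3, v, 4)$, and the three values match the three cases of the statement.

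The only point needing care---and it is bookkeeping rather than a genuine obstacle---is checking that the side conditions of the construction theorems hold throughout the range $v \ge 147$: each case requires $n \ge 4$, $n \ne 6$, and $\rho \le n$ (or $\rho \le n-1$ for Theorem~\ref{t2.con}). Since $v \ge 147$ forces $n \ge 48$ in every residue class, all of these are comfortably satisfied. Thus the entire content of the theorem is inherited from the earlier general results; there is no new combinatorial argument to supply, only the substitution $\rho = 3$ together with the two small packing numbers $D(3,4)$ and $D(4,4)$.
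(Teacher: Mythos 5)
Your proposal is correct and follows exactly the paper's own route: the upper bound is Theorem~\ref{t.bound} specialized to $\rho = 3$ (with $\frac{1}{2}(27\cdot 9 - 9 + 60) = 147$ and $v - \frac{3\cdot 4}{6} = v-2$), and the lower bounds come from Theorems~\ref{t.con}, \ref{t2.con} and \ref{t1.con} with $D(3,4)=0$ and $D(4,4)=1$, which is precisely what the paper does. Your additional verification of the side conditions ($n \ge 48$ in every residue class) is sound bookkeeping that the paper leaves implicit.
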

\begin{proof}
The upper bound follows from Theorem \ref{t.bound} and the lower bounds follow from 
Theorems \ref{t.con}, \ref{t2.con} and \ref{t1.con}. Note that we use the fact that $D(4,4) = 1$
when $v \equiv 1 \bmod 3$; in this case, we apply Theorem \ref{t1.con}.
\end{proof}

\begin{Theorem}\label{t.beta4}
Suppose $v \ge 240$.
 Then  
 \[ \beta(4, v, 4) \leq \left\lfloor {4v -10\over 3} \right\rfloor.\]
   Also, \[ \beta(4, v, 4) \ge \begin{cases}
 {4v -21\over 3} &\mbox{ if $v \equiv 0 \bmod 3$}\\
 {4v -13\over 3} &\mbox{ if $v \equiv 1 \bmod 3$}\\
 {4v -17\over 3} &\mbox{ if $v \equiv 2 \bmod 3$}.
\end{cases}
\]
\end{Theorem}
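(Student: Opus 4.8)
The plan is to follow exactly the pattern established in the proofs of Theorems~\ref{t.beta2} and \ref{t.beta3}: the upper bound comes from the general bound of Theorem~\ref{t.bound} specialized to $\rho = 4$, and the three lower bounds come from the explicit constructions of Theorems~\ref{t.con}, \ref{t2.con} and \ref{t1.con}, one for each residue class of $v$ modulo $3$.

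For the upper bound, I would substitute $\rho = 4$ into Theorem~\ref{t.bound}. Its hypothesis $v \ge \frac{1}{2}(27\rho^2 - 3\rho + 60)$ becomes $v \ge \frac{1}{2}(432 - 12 + 60) = 240$, which is precisely the hypothesis of the present theorem, so no extra constraint is introduced. The conclusion $\frac{\rho v}{3} - \frac{\rho(\rho+1)}{6}$ then simplifies to $\frac{4v}{3} - \frac{10}{3} = \frac{4v-10}{3}$; since the number of blocks is an integer, taking the floor gives $\beta(4,v,4) \le \lfloor (4v-10)/3 \rfloor$.

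For the lower bounds I would split on $v \bmod 3$ and choose, in each case, the construction whose output parameter lands in the correct residue class. Because $\rho = 4 \equiv 1 \bmod 3$, Theorem~\ref{t.con} (where $v - \rho \equiv 0$) produces $v \equiv 1$, Theorem~\ref{t1.con} (where $v - \rho \equiv 1$) produces $v \equiv 2$, and Theorem~\ref{t2.con} (where $v - \rho \equiv 2$) produces $v \equiv 0$. Concretely: when $v \equiv 1 \bmod 3$, set $n = (v-4)/3$ in Theorem~\ref{t.con} to obtain $n\rho + D(4,4) = 4n + 1 = (4v-13)/3$ blocks; when $v \equiv 2 \bmod 3$, set $n = (v-5)/3$ in Theorem~\ref{t1.con} to obtain $n\rho + D(5,4) = 4n + 1 = (4v-17)/3$ blocks; and when $v \equiv 0 \bmod 3$, set $n = (v-3)/3$ in Theorem~\ref{t2.con} to obtain $(n-1)\rho + D(4,4) = 4n - 3 = (4v-21)/3$ blocks. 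The only packing numbers required are $D(4,4) = 1$ and $D(5,4) = 1$, both of which are immediate: four points carry a single block, and five points cannot carry two distinct blocks of size four.

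The last thing to verify is parameter feasibility, and I expect this to be the only place any care is needed, though it is minor. Each of the three constructions requires $n \ge 4$, $n \neq 6$, and $\rho$ in the stated range ($\rho \le n$ for Theorems~\ref{t.con} and \ref{t1.con}, and $\rho \le n-1$ for Theorem~\ref{t2.con}). Since $v \ge 240$ forces $n \ge 79$ in every case, all of these conditions hold comfortably, so there is no genuine obstacle; the proof is entirely the bookkeeping of matching each residue class to its construction and substituting the two small values of $D$.
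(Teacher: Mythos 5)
Your proposal is correct and takes essentially the same route as the paper, whose proof consists of exactly the same two sentences you expand upon: the upper bound is Theorem~\ref{t.bound} specialized to $\rho = 4$ (whose hypothesis $\frac{1}{2}(27\cdot 16 - 12 + 60) = 240$ matches the stated condition), and the lower bounds come from Theorems~\ref{t.con}, \ref{t2.con} and \ref{t1.con} together with $D(4,4) = D(5,4) = 1$. Your matching of residue classes to constructions and the resulting block counts $(4v-13)/3$, $(4v-17)/3$ and $(4v-21)/3$ all check out, so you have in fact supplied more detail than the paper itself.
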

\begin{proof}
The upper bound follows from Theorem \ref{t.bound}. The lower bounds follow from 
Theorems \ref{t.con}, \ref{t2.con} and \ref{t1.con}, using the fact that $D(4,4) = D(5,4) =1$.
\end{proof}

\begin{Theorem}\label{t.beta5}
Suppose $v \ge 360$.

 Then  
 \[ \beta(5, v, 4) \leq \left\lfloor {5v-15\over 3} \right\rfloor.\]
   Also, \[ \beta(5, v, 4) \ge \begin{cases}
 {5v-27\over 3} &\mbox{ if $v \equiv 0 \bmod 3$}\\
 {5v-32\over 3} &\mbox{ if $v \equiv 1 \bmod 3$}\\
 {5v-22\over 3} &\mbox{ if $v \equiv 2 \bmod 3$}.
\end{cases}
\]
\end{Theorem}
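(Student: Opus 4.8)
The plan is to follow verbatim the template already used for Theorems \ref{t.beta2}, \ref{t.beta3}, and \ref{t.beta4}: the upper bound is the third general bound (Theorem \ref{t.bound}) specialized to $\rho = 5$, and the three lower bounds come from the explicit constructions of Theorems \ref{t.con}, \ref{t2.con}, and \ref{t1.con}, with each construction handling one residue class of $v$ modulo $3$. There is essentially no new idea required; the work is pure verification, so I will simply record the substitutions that make each piece fit.

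For the upper bound, I substitute $\rho = 5$ into Theorem \ref{t.bound}. The hypothesis $v \ge \frac{1}{2}(27\rho^2 - 3\rho + 60)$ becomes $v \ge \frac{1}{2}(27\cdot 25 - 15 + 60) = \frac{1}{2}\cdot 720 = 360$, which is exactly the stated threshold; this is a reassuring check that the constant $360$ is forced rather than chosen. The conclusion of Theorem \ref{t.bound} then reads
\[
\beta(5,v,4) \le \frac{5v}{3} - \frac{5\cdot 6}{6} = \frac{5v - 15}{3},
\]
and since $\beta(5,v,4)$ is an integer I take the floor to obtain $\beta(5,v,4) \le \lfloor (5v-15)/3 \rfloor$.

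For the lower bounds, each construction produces a packing whose largest PPC has size $\rho = 5$, and the three are distinguished by the value of $v - \rho \bmod 3$, hence by $v \bmod 3$. Concretely, Theorem \ref{t1.con} (taking $v = 3n+6$, so $v \equiv 0$) yields $5n + D(6,4)$ blocks; Theorem \ref{t2.con} (taking $v = 3n+4$, so $v \equiv 1$) yields $5(n-1) + D(5,4)$ blocks; and Theorem \ref{t.con} (taking $v = 3n+5$, so $v \equiv 2$) yields $5n + D(5,4)$ blocks. The only slightly nonroutine ingredients are the small packing numbers $D(5,4) = 1$ and $D(6,4) = 1$, both of which I read off from Theorem 1.1 (neither $5$ nor $6$ lies in the exceptional set, so $\epsilon = 0$ in each case). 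Rewriting $n$ in terms of $v$ in each branch then gives $(5v-27)/3$, $(5v-32)/3$, and $(5v-22)/3$ respectively, which are precisely the three cases of the claimed lower bound.

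The only step that could conceivably go wrong is a failure of the side conditions of the constructions, so that is where I would focus the verification: each of Theorems \ref{t.con}, \ref{t2.con}, and \ref{t1.con} requires $n \ge 4$, $n \ne 6$, and $\rho \le n$ (or $\rho \le n-1$ for Theorem \ref{t2.con}). Since $v \ge 360$ forces $n \ge 118$ in every branch, all of these are satisfied with enormous margin, so the constructions apply and the lower bounds hold as stated. I therefore expect no genuine obstacle; the proof is a three-line invocation of the earlier results together with the evaluations $D(5,4) = D(6,4) = 1$.
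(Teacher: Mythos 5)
Your proposal is correct and follows exactly the paper's own proof: the upper bound is Theorem \ref{t.bound} with $\rho=5$ (where the hypothesis indeed specializes to $v \ge 360$), and the lower bounds are Theorems \ref{t.con}, \ref{t2.con} and \ref{t1.con} matched to the residues $v \equiv 2, 1, 0 \bmod 3$ respectively, using $D(5,4)=D(6,4)=1$. Your arithmetic and the verification of the side conditions ($n \ge 118$ in every branch) are all accurate; the paper simply states this invocation without writing out the substitutions.
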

\begin{proof}
The upper bound follows from Theorem \ref{t.bound}. The lower bounds follow from 
Theorems \ref{t.con}, \ref{t2.con} and \ref{t1.con}, using the fact that $D(5,4) = D(6,4) = 1$.
\end{proof}

 \medskip
 
 We note that, using the bounds from Theorems \ref{t.bound1} and \ref{bound-improved}, one can also obtain
 results for smaller values of $v$.


For a $(v,4)$-packing, the largest possible value of $\rho$ is $\lfloor {v\over 4}\rfloor$.
 From the existence of $(v,4,1)$-BIBDs, we can determine some values of $\beta\left( \left\lfloor{v\over 4}\right\rfloor, v, 4\right)$.
 The necessary and sufficient conditions for the existence of a $(v,4,1)$-BIBD is $v \equiv 1, 4\bmod{12}$ (see \cite{BIBD}). Further, for 
 $v \equiv 4 \bmod{12}$, there exists a resolvable $(v,4,1)$-BIBD. 
 Now we consider the maximum partial parallel classes in $(v,4,1)$-BIBDs, for $v \equiv 1\bmod{12}$.

 A {\em $k$-group divisible design} (or \emph{$k$-GDD}) of type $h^n$ is a triple $(X, \mathcal G, \mathcal B)$, where $X$ is a  set
 of $hn$ \emph{points}, $\mathcal G$ is a partition of $X$ into $h$ \emph{groups} of size $n$ and $\mathcal B$ is a set of \emph{blocks} of 
 size $k$, such that every pair of distinct points of $X$ occurs in exactly one block or one group, but  not both.  
 A $k$-GDD of type $h^k$ is the same as a $TD(k,h)$. When the blocks
 in $\mathcal B$ can be partitioned into parallel classes, we say that the GDD is \emph{resolvable} GDD and denote it as a $k$-RGDD.
 
 From \cite[Theorem 2.19]{GDD}, we have the following result.
 
 \begin{Lemma}\label{l.GDD}
 If $n \equiv 0 \bmod 4$ and $n> 4$, then there exists a $4$-RGDD of type $3^n$.
 \end{Lemma}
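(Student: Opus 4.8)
This lemma is an existence result for an infinite family of resolvable group divisible designs, and the quickest proof is to invoke the cited classification: it is exactly the case $g = 3$ of the determination of the spectrum of $4$-RGDDs of type $g^n$ recorded in \cite[Theorem 2.19]{GDD}. Nevertheless, I will outline how a self-contained proof would go, following the standard direct-plus-recursive methodology of the RGDD literature.

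I would begin by pinning down the parameters and the reason $n = 4$ is excluded. A parallel class partitions the $3n$ points into blocks of size $4$, forcing $4 \mid 3n$ and hence $4 \mid n$; counting the blocks through a fixed point shows that each point lies in exactly $n-1$ blocks and that there are exactly $n-1$ parallel classes. Since no block can meet a group in two points, when $n = 4$ every block must be a transversal of the four groups, so a $4$-GDD of type $3^4$ coincides with a $TD(4,3)$; resolving such a design is equivalent to a $TD(5,3)$, i.e.\ to three MOLS of order $3$, which do not exist. This confirms that $n = 4$ genuinely fails and explains the hypothesis $n > 4$.

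For the base cases I would construct $4$-RGDDs of type $3^n$ for the smallest admissible orders directly. Taking the point set to be an abelian group of order $3n$ with the $n$ groups realized as the cosets of its subgroup of order $3$, one builds the blocks from a small set of base blocks developed through the group; the resolution into $n-1$ parallel classes is then produced by a starter--adder pair (equivalently, by exhibiting a Kirkman frame of the appropriate type). A handful of such direct constructions supplies enough small orders to seed the recursion.

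The recursive step is Wilson's fundamental construction together with filling in groups: I would inflate a master transversal design (or GDD) by weight $3$, fill its blocks with the base ingredient designs, and complete the groups using the smaller $4$-RGDDs already in hand; the inputs needed are transversal designs of modest order, which exist because the requisite MOLS do. I expect the main obstacle to be the usual one for such spectrum results: the recursion does not directly reach a finite set of small ``boundary'' values of $n$, so these must be dispatched individually by ad hoc or computer-assisted direct constructions, and one must then verify that after handling them $n = 4$ remains the only exception. Checking that every $n \equiv 0 \bmod 4$ with $n > 4$ is either a base case or is produced by the recursion (with all ingredient TDs available) is where the real bookkeeping lies.
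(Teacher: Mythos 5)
Your proposal matches the paper exactly: the paper offers no proof of this lemma at all, simply quoting it as \cite[Theorem 2.19]{GDD} (Ge and Ling), which is precisely your primary move. Your supplementary outline---including the correct observation that $n=4$ fails because a resolvable $TD(4,3)$ would amount to a $TD(5,3)$, i.e.\ three MOLS of order $3$, which do not exist---is sound but goes beyond anything the paper attempts.
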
  
 
 By adding a new point to each of the groups of a $4$-RGDD of type $3^n$,
 we obtain a $(3n+1, 4,1)$-BIBD that has a PPC of size ${3n/4}$ (in fact, it has many PPCs of this size). 
 
 When $v  \equiv 1,  4\bmod {12}$, a $(v,4,1)$-BIBD is a maximum packing, so 
 $D(v, 4) = {v (v - 1)/12}$. So we have proven
 the following result.

\begin{Theorem}\label{t.betal}
Suppose  $v  \equiv 1,  4 \bmod {12}$ and $v \neq 13$.
 Then  
$$
 \beta\left( \left\lfloor{v\over 4}\right\rfloor, v, 4\right)  =  {v(v - 1) \over 12}. 
$$
\end{Theorem}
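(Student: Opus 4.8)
The plan is to prove the two inequalities separately, with the upper bound being immediate and essentially all of the content lying in exhibiting an explicit maximum packing whose largest PPC has exactly the claimed size.

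For the upper bound I would first record that, when $v \equiv 1, 4 \bmod 12$, a $(v,4,1)$-BIBD exists and is a maximum $(v,4)$-packing, so that $D(v,4) = \frac{v(v-1)}{12}$. Since every $(v,4)$-packing — in particular one whose largest PPC has size $\lfloor v/4 \rfloor$ — has at most $D(v,4)$ blocks, we obtain $\beta\!\left(\lfloor v/4 \rfloor, v, 4\right) \le \frac{v(v-1)}{12}$ with no further work. The substance is therefore the reverse inequality, for which the goal is to construct a $(v,4,1)$-BIBD that contains a PPC of size $\lfloor v/4 \rfloor$. Because $\lfloor v/4 \rfloor$ is the largest PPC size possible in any $(v,4)$-packing, such a PPC is automatically maximum, and the design has exactly $\frac{v(v-1)}{12}$ blocks, so it realizes the bound.

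I would split the construction into the two residue classes. When $v \equiv 4 \bmod 12$, we have $\lfloor v/4 \rfloor = v/4$, and I would invoke the existence of a resolvable $(v,4,1)$-BIBD noted above; any one of its parallel classes consists of $v/4$ disjoint blocks partitioning the point set, hence is a PPC of the required size. When $v \equiv 1 \bmod 12$ and $v \neq 13$, write $v = 3n+1$ with $n = 4m \equiv 0 \bmod 4$; here $v \neq 13$ forces $m \ge 2$, i.e.\ $n > 4$, so Lemma \ref{l.GDD} supplies a $4$-RGDD of type $3^n$. Adjoining a single new point $\infty$ to all $n$ groups (adding the blocks $\{\infty\}\cup G_i$) yields the $(3n+1,4,1)$-BIBD described before the theorem statement. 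A single parallel class of the underlying RGDD is a set of $3n/4$ pairwise disjoint blocks, none of which contains $\infty$, and is thus a PPC of size $3n/4 = \lfloor v/4 \rfloor$ in the BIBD. Combining the two bounds gives the claimed equality.

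I expect the only delicate point to be the exclusion of $v = 13$, and I would comment on it to explain the hypothesis rather than treat it as an obstacle: there $n = 4$ falls outside the range of Lemma \ref{l.GDD}, and in fact the relevant extremal design is the projective plane $PG(2,3)$, whose lines pairwise intersect, so it contains no PPC of size $3 = \lfloor 13/4 \rfloor$; hence no maximum packing attains the extremal PPC size when $v = 13$. Beyond this, the argument requires no computation: the main thing to verify is simply that the exhibited set of blocks is genuinely a PPC of size $\lfloor v/4 \rfloor$, which reduces to the two standard resolvability inputs (resolvable $(v,4,1)$-BIBDs for $v \equiv 4 \bmod 12$ and Lemma \ref{l.GDD} for $v \equiv 1 \bmod 12$) already available.
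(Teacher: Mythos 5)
Your proposal is correct and follows essentially the same route as the paper: the upper bound is immediate from $D(v,4)=\frac{v(v-1)}{12}$, and the lower bound comes from a parallel class of a resolvable $(v,4,1)$-BIBD when $v \equiv 4 \bmod{12}$, and from a parallel class of the $4$-RGDD of type $3^n$ (Lemma~\ref{l.GDD}) after adjoining a point to the groups when $v \equiv 1 \bmod{12}$, noting that such a PPC of size $\lfloor v/4\rfloor$ is automatically maximum. Your closing remark on why $v=13$ fails (the lines of $PG(2,3)$ pairwise intersect) is accurate and matches the paper's separate treatment of that case.
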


The exceptional case in Theorem \ref{t.betal} can  be handled easily.

\begin{Theorem}
$
 \beta\left( 3, 13, 4 \right)  = 7. 
$
\end{Theorem}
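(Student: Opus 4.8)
The plan is to prove matching upper and lower bounds of $7$ by a direct argument tailored to the parameters $v = 13$, $\rho = 3$; the general estimates of Section~3 are unavailable here, since Theorem~\ref{t.bound} and its consequences require $v$ to be large relative to $\rho$ (indeed Theorem~\ref{t.beta3} assumes $v \ge 147$). The starting observation is that $\lfloor 13/4 \rfloor = 3$, so \emph{no} $(13,4)$-packing can contain four pairwise disjoint blocks; consequently, requiring the maximum PPC to have size exactly $3$ is equivalent to requiring that the packing contain at least three pairwise disjoint blocks. This is also why $v = 13$ is excluded from Theorem~\ref{t.betal}: the projective plane of order $3$ is a $(13,4,1)$-BIBD, but any two of its lines meet, so its maximum PPC has size only $1$.

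For the upper bound I would fix any PPC $\{B_1, B_2, B_3\}$ of size $3$ and set $P = B_1 \cup B_2 \cup B_3$, so that $|P| = 12$ and a single point $\infty = X \setminus P$ remains uncovered. The key structural fact is that any block $B \notin \{B_1, B_2, B_3\}$ meets each $B_i$ in at most one point (otherwise a pair inside some $B_i$ would be repeated), whence $|B \cap P| \le 3$. Since $|B| = 4$ and $\infty$ is the only point outside $P$, every such block must contain $\infty$ together with exactly one point from each of $B_1, B_2, B_3$. Thus every block off the PPC passes through $\infty$ and uses three of the twelve pairs $\{\infty, x\}$ with $x \in P$; as these pairs cannot repeat, there are at most $\lfloor 12/3 \rfloor = 4$ such blocks, giving at most $3 + 4 = 7$ blocks in all.

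For the lower bound I would exhibit a packing attaining this value. Take $B_1 = \{1,2,3,4\}$, $B_2 = \{5,6,7,8\}$, $B_3 = \{9,10,11,12\}$, $\infty = 13$, and adjoin the four blocks $\{13,1,5,9\}$, $\{13,2,6,10\}$, $\{13,3,7,11\}$, $\{13,4,8,12\}$, each consisting of $\infty$ together with a transversal of the three groups. This is a valid $(13,4)$-packing: the twelve pairs $\{13,x\}$ are distinct, and each cross-group pair occurs in at most one block because the four transversals use every point of each $B_i$ exactly once (so two transversals never agree in any coordinate). The blocks $B_1, B_2, B_3$ form a PPC of size $3$, which is necessarily maximum, so this packing has $7$ blocks with maximum PPC of size $3$, proving $\beta(3,13,4) \ge 7$.

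I do not anticipate a serious obstacle: the entire argument rests on the single observation that, when $v = 13$ and three disjoint blocks are present, every remaining block is forced through the unique uncovered point $\infty$. The only places needing care are verifying that the intersection constraint genuinely forces all extra blocks through $\infty$ (so that counting the pairs $\{\infty, x\}$ is legitimate) and confirming that the explicit construction respects the packing condition on all cross-group pairs.
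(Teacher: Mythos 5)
Your proof is correct and takes essentially the same route as the paper: your seven-block packing is exactly the paper's construction (with the point $13$ playing the role of $\infty$), and your upper-bound argument—every block outside the PPC meets each $B_i$ in at most one point, hence is forced through the unique uncovered point, so the twelve pairs $\{\infty,x\}$ allow at most $\lfloor 12/3\rfloor = 4$ further blocks—is precisely the counting behind the step the paper dismisses with ``it is clear that there does not exist a packing having eight blocks.'' If anything, your write-up is more complete than the paper's, since it makes that assertion explicit and also justifies why maximality of the PPC is automatic from $4\cdot 4 > 13$.
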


\begin{proof}
The following seven blocks are a packing on $13$ points with a maximum PPC of size $3$:
\[
\begin{array}{llll}
\{ 1,2,3,4\}, & \{5,6,7,8\}, & \{9,10,11,12\},\\
\{ 1,5,9,13\}, & \{2,6,10,13\}, & \{3,7,11,13\}, & \{4,8,12,13\}.
\end{array}
\]
Also, it is clear that there does not exist a packing having eight blocks and a maximum PPC of size $3$.
\end{proof}

 \section{Some results for $k > 4$}

Many of the methods used in previous sections can be generalized to $k > 4$.  First we consider constructions.
 
\begin{Theorem}\label{t.generalP}
Suppose there exist  $k - 2$ MOLS of order $n$ with a transversal of size $\rho \leq n$. Then there is 
a $((k-1)n+\rho, k)$-packing with $\rho n +  D(\rho ,k)$ blocks, in which   
  the size of largest PPC has  size $\rho$.
\end{Theorem}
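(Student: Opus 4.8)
The statement generalizes Theorem~\ref{l.RTD} from $k=4$ to arbitrary $k$, and the construction should mirror it exactly. The plan is to start from the $k-2$ MOLS of order $n$, which are equivalent to a $TD(k,n)$ on $kn$ points partitioned into $k$ groups $G_1, \dots, G_k$ of size $n$. The common transversal of size $\rho$ (one cell per row and column of each latin square) corresponds to a distinguished set of $\rho$ disjoint blocks of the $TD$, each meeting every group in exactly one point. I would single out the set $S$ of $\rho$ points of $G_k$ that lie on this transversal.

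Next I would perform the same deletion used in Theorem~\ref{l.RTD}: delete the $n-\rho$ points of $G_k$ not in $S$, together with every block through a deleted point. Since each point of $G_k$ lies in exactly $n$ blocks of the $TD$ and the groups are otherwise block-free, after deletion the surviving blocks are precisely the $\rho n$ blocks meeting $S$, giving a $((k-1)n+\rho, k)$-packing on the points $(\bigcup_{i<k} G_i) \cup S$. The $\rho$ transversal blocks survive the deletion (the transversal meets $G_k$ only inside $S$) and form a PPC of size $\rho$, because each is a full block on $k$ points, one from each group, and together they cover $k\rho$ distinct points. Finally I would adjoin, on the $\rho$ points of $S$ alone, an optimal $(\rho,k)$-packing with $D(\rho,k)$ blocks, yielding $\rho n + D(\rho,k)$ blocks in total.

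It remains to verify that the largest PPC has size exactly $\rho$ and not more. Every surviving block of the original $TD$ contains one point from each of the $k$ groups, hence contains exactly one point of $S$; likewise every adjoined block lies entirely inside $S$, so it too contains a point of $S$. Thus every block of the final packing meets the $\rho$-point set $S$, and any collection of pairwise disjoint blocks can use each point of $S$ at most once, so a PPC has at most $\rho$ blocks. The transversal blocks realize this, so the maximum PPC size is exactly $\rho$.

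\textbf{Main obstacle.} The argument is essentially a routine transcription of Theorem~\ref{l.RTD}, so I expect no real difficulty; the only point needing care is the translation between the combinatorial object named in the hypothesis (\emph{$k-2$ MOLS with a common transversal of size $\rho$}) and the block structure of the associated $TD(k,n)$---specifically, confirming that a size-$\rho$ partial transversal of the MOLS yields $\rho$ \emph{disjoint} $TD$-blocks that each meet $G_k$ in a distinct point, so that the bookkeeping ($\rho n$ deleted-survivor blocks, a clean PPC of size $\rho$, and the covering property of $S$) goes through verbatim.
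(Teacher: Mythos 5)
Your proposal is correct and follows essentially the same construction as the paper's proof: take the $TD(k,n)$ from the MOLS, keep only the $\rho n$ blocks through the $\rho$ distinguished points of the last group arising from the transversal, adjoin an optimal $(\rho,k)$-packing on those points, and note that every block meets this $\rho$-point set so no PPC can exceed size $\rho$. The extra care you flag about translating a partial common transversal into $\rho$ disjoint $TD$-blocks is exactly the step the paper handles implicitly, and your treatment of it is sound.
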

\begin{proof}
The proof is very similar to the proof of Theorem \ref{l.RTD}.
We start with a TD$(k,n)$ having $\rho$ disjoint blocks. These blocks will be the PPC of size 
$\rho$. Let $Y$ denote the $\rho$ points in the last group 
that occur in a block of the PPC. Delete the blocks that do not contain a point in $Y$. The remaining $\rho n$ blocks form a packing on $(k-1)n +\rho$ points. We can also adjoin the blocks of a packing on $Y$. The resulting packing does not contain a PPC of size $\rho + 1$ because every block contains at least one point from $Y$.
\end{proof}

\begin{Corollary}\label{t.generalK}
Suppose there are $k - 1$ MOLS of order $n$. Then, 
for $1\leq \rho \le n$, there is a $((k- 1)n + \rho, k)$-packing
with $n\rho + D(\rho,k)$ blocks, in which the largest PPC has size $\rho$. 
\end{Corollary}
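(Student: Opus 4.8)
The plan is to obtain this corollary as an immediate consequence of Theorem \ref{t.generalP}, so the only substantive step is to convert the hypothesis ``$k-1$ MOLS of order $n$'' into the hypothesis of that theorem, namely ``$k-2$ MOLS of order $n$ together with a transversal of size $\rho$.'' First I would invoke the standard correspondence between MOLS and transversal designs: a set of $k-1$ MOLS of order $n$ is equivalent to a $TD(k+1,n)$, and under this correspondence a common transversal of $k-2$ of the squares is exactly a parallel class of the associated $TD(k,n)$ (equivalently, a set of $n$ disjoint blocks). The extra latin square is precisely what furnishes such a transversal.

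Concretely, I would represent the $TD(k+1,n)$ on groups $G_1,\dots,G_{k+1}$, indexing its blocks $B_{r,c}$ by pairs $(r,c)\in\{1,\dots,n\}^2$, where the point of $G_1$ is determined by $r$, the point of $G_2$ by $c$, and the point of $G_t$ (for $3\le t\le k+1$) by $L_{t-2}(r,c)$, using the MOLS $L_1,\dots,L_{k-1}$. Fixing any symbol $s$ and collecting the blocks $B_{r,c}$ with $L_{k-1}(r,c)=s$ gives exactly the $n$ blocks through one point of $G_{k+1}$; after deleting the group $G_{k+1}$ these $n$ blocks partition the remaining $kn$ points, i.e.\ they form a parallel class of the $TD(k,n)$ on $G_1,\dots,G_k$ (the one corresponding to $L_1,\dots,L_{k-2}$). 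Equivalently, the set of cells where $L_{k-1}$ takes the value $s$ is a common transversal of size $n$ of $L_1,\dots,L_{k-2}$.

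Since $1\le\rho\le n$, I would then simply retain $\rho$ of these $n$ disjoint blocks (equivalently, $\rho$ of the $n$ cells), which still constitutes a common (partial) transversal of size $\rho$ of the $k-2$ squares $L_1,\dots,L_{k-2}$. This is exactly the hypothesis of Theorem \ref{t.generalP}, which then yields a $((k-1)n+\rho,k)$-packing with $n\rho+D(\rho,k)$ blocks whose largest PPC has size $\rho$, as claimed. The only point needing care, though it is routine, is the verification that the cells where $L_{k-1}$ is constant form a common transversal: one cell per row and per column follows from $L_{k-1}$ being a latin square, while the distinctness of the symbols in each $L_t$ along those cells is precisely the orthogonality of $L_{k-1}$ to $L_t$. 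I do not anticipate a genuine obstacle here, since everything reduces to this well-known MOLS--transversal equivalence followed by a direct appeal to Theorem \ref{t.generalP}.
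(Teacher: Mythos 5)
Your proof is correct and follows essentially the same route as the paper: the paper's proof also observes that the extra $(k-1)$st square furnishes a common transversal of size $n$ (hence of any size $\rho \leq n$) for the remaining $k-2$ MOLS, and then applies Theorem \ref{t.generalP}. Your write-up merely spells out the standard MOLS--transversal verification that the paper leaves implicit.
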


\begin{proof}
If there are $k - 1$ MOLS of order $n$, then any  $k - 2$ of these MOLS have a transversal of size $n$ and hence they have a transversal of size $\rho$ for any positive integer $\rho \leq n$. Apply Theorem \ref{t.generalP}.
\end{proof}

Next we  generalize Theorem \ref{t.bound1} in a straightforward manner to obtain an upper bound for $\beta(\rho, v, k)$. 
For a $(v, k)$-packing in which $\Par$ is a largest PPC of size $\rho$, let $P$ be the
 points in $\Par$ and let $T$ be the remaining points in the packing. 
 Consider a block $B = \{ a_1, a_2,\dots , a_k\} \in \Par$. For $1 \leq i \leq k$, let $T_{a_i}$ denote the set of blocks that contain $a_i$ and $k -1$ points in $T$. Denote
 $t_{a_i}= |T_{a_i}|$ for $1 \leq i \leq k$. Observe that \[t_{a_i} \leq \left\lfloor{v - k\rho \over k-1} \right\rfloor\] for all $i$.
 
 Similar to Lemma \ref{l-t_w}, we have
 
\begin{Lemma}
\label{l-t_w-k}
Suppose $B = \{ a_1, a_2,\dots , a_k\} \in \Par$ and suppose $t_{a_1} \geq \max \{  t_{a_2},\dots,t_{a_k}\}$. Then  one of  the following two conditions holds:
\begin{enumerate}
\item $t_{a_1} \leq k-1$, or
\item $t_{a_1} \geq k$ and  $t_{a_2} = \cdots = t_{a_k} = 0$.
\end{enumerate}
\end{Lemma}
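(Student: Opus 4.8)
The plan is to generalize the proof of Lemma \ref{l-t_w} essentially verbatim, replacing the fixed block size $4$ by $k$. The key structural observation that drove the case $k=4$ is this: for any two distinct points $x,y$ lying in a common block $B_i \in \Par$, a block $B \in \mathcal{T}_x^3$ (now: a block in $T_x$, containing $x$ and $k-1$ points of $T$) and a block $B' \in \mathcal{T}_y^3$ (now: $B' \in T_y$) cannot be disjoint. For if they were, we could delete $B_i$ from $\Par$ and adjoin the two disjoint blocks $B$ and $B'$, yielding a PPC of size $\rho + 1$, contradicting maximality of $\Par$. This observation carries over unchanged, since it depends only on $B, B'$ each meeting $P$ in a single point (namely $x$, resp.\ $y$) and otherwise lying in $T$, together with $x,y$ being the two ``used'' points of the block $B_i$ being removed.

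First I would establish this disjointness obstruction formally. Assume $t_{a_1} \geq k$, and suppose for contradiction that some $B \in T_{a_i}$ exists for $i \geq 2$. The idea is that a single block $B \in T_{a_i}$ occupies $k-1$ points of $T$, so it can conflict with at most $k-1$ of the blocks in $T_{a_1}$ (each such conflicting block in $T_{a_1}$ must share one of those $k-1$ points of $T$, since the blocks in $T_{a_1}$ are pairwise disjoint away from $a_1$). Hence if $t_{a_1} \geq k$, there remains at least one block $B' \in T_{a_1}$ disjoint from $B$, and then $B, B'$ together with the argument above contradict maximality. This forces $t_{a_i} = 0$ for every $i \geq 2$ whenever $t_{a_1} \geq k$, giving case 2. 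The dichotomy is then immediate: either $t_{a_1} \leq k-1$ (case 1), or $t_{a_1} \geq k$, in which case we have just shown $t_{a_2} = \cdots = t_{a_k} = 0$ (case 2).

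I expect the only subtlety to be the counting bound ``$k-1$ conflicts per block,'' which is where the specialization from the $k=4$ case must be checked. In the $k=4$ proof the analogous step was phrased simply as ``there is a block $B' \in \mathcal{T}_w^3$ disjoint from $B$'' once $t_w^3 \geq 4$; here I want the clean statement that a block $B \in T_{a_i}$ meets at most $k-1$ of the pairwise-$a_1$-disjoint blocks of $T_{a_1}$, so that $t_{a_1} \geq k$ guarantees a disjoint partner. This is the main (and essentially the only) thing to verify carefully, and it follows because the $t_{a_1}$ blocks of $T_{a_1}$ share only the point $a_1$ pairwise, while $B$ contributes no point of $P$ other than $a_i \neq a_1$, so any block of $T_{a_1}$ that intersects $B$ must do so in one of $B$'s $k-1$ points of $T$, and distinct such blocks use distinct such points. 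With this counting in hand, the lemma follows exactly as stated.
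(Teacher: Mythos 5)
Your proof is correct and follows exactly the paper's approach: the paper proves this lemma only by reference to Lemma \ref{l-t_w}, whose proof is precisely your argument (the swap contradicting maximality of $\Par$, plus the observation that a block meeting $T$ in $k-1$ points can intersect at most $k-1$ of the blocks in $T_{a_1}$, which pairwise share only $a_1$). You have in fact made explicit the counting step that the paper leaves implicit, which is a welcome addition rather than a deviation.
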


\begin{Theorem}\label{t.boundK}
\[ \beta (\pc,v,k) \leq 
\rho \left( {k^2(\rho - 1)\over 2} + 1 + \max \left\{ k(k - 1), \left\lfloor{v - k\rho \over k-1} \right\rfloor \right\}\right).
\]
\end{Theorem}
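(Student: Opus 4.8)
The plan is to generalize the proof of Theorem~\ref{t.bound1} directly, partitioning the blocks of the packing by how they intersect $P$ and bounding each class. First I would recall that since $\Par$ is a maximum PPC there is no block entirely contained in $T$, so every block not in $\Par$ meets $P$ in at least one point. I would then classify the blocks not in $\Par$ into two types: those meeting $P$ in exactly one point (contributing $k-1$ points to $T$), and those meeting $P$ in at least two points.

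For the blocks meeting $P$ in exactly one point, I would invoke Lemma~\ref{l-t_w-k}. For each block $B=\{a_1,\dots,a_k\}\in\Par$, the lemma says that either the maximum $t_{a_i}$ is at most $k-1$, or the maximum equals some value bounded by $\lfloor (v-k\rho)/(k-1)\rfloor$ while all the other $t_{a_j}$ vanish. In the first case the total over the block is at most $k(k-1)$; in the second case it is at most $\lfloor (v-k\rho)/(k-1)\rfloor$. Hence each block of $\Par$ contributes at most
\[
\max\left\{ k(k-1),\ \left\lfloor\frac{v-k\rho}{k-1}\right\rfloor\right\}
\]
such ``one-point'' blocks, and summing over the $\rho$ blocks of $\Par$ gives the $\rho\cdot\max\{\dots\}$ term.

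For the blocks meeting $P$ in at least two points, I would count the pairs of points of $P$ that are \emph{not} already covered by a block of $\Par$. Since $|P|=k\rho$ and each of the $\rho$ blocks covers $\binom{k}{2}$ pairs, the number of available pairs is
\[
\binom{k\rho}{2}-\rho\binom{k}{2}=\frac{k^2\rho(\rho-1)}{2}.
\]
Because each such block uses at least one of these pairs and pairs are never repeated in a packing, the number of blocks with two or more points in $P$ is at most $\tfrac{k^2\rho(\rho-1)}{2}$. Adding the $\rho$ blocks of $\Par$ itself and collecting terms, the total is at most
\[
\frac{k^2\rho(\rho-1)}{2}+\rho\cdot\max\left\{k(k-1),\ \left\lfloor\frac{v-k\rho}{k-1}\right\rfloor\right\}+\rho,
\]
which rearranges to the stated bound $\rho\left(\tfrac{k^2(\rho-1)}{2}+1+\max\{\dots\}\right)$.

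The reasoning is a clean parallel to the $k=4$ case, so I do not expect a genuine obstacle; the one point requiring a little care is the pair-counting step. I would double-check that $\binom{k\rho}{2}-\rho\binom{k}{2}$ really simplifies to $k^2\rho(\rho-1)/2$ (it does, since $\binom{k\rho}{2}=\tfrac{k\rho(k\rho-1)}{2}$ and $\rho\binom{k}{2}=\tfrac{\rho k(k-1)}{2}$, whose difference is $\tfrac{k\rho(k\rho-k)}{2}=\tfrac{k^2\rho(\rho-1)}{2}$), and confirm that the bound $k(k-1)$ in the max arises correctly from the $t_{a_1}\le k-1$ branch of Lemma~\ref{l-t_w-k} summed over all $k$ points of a block. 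No new combinatorial input beyond Lemma~\ref{l-t_w-k} is needed.
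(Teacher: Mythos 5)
Your proposal is correct and follows essentially the same route as the paper's proof: you invoke Lemma~\ref{l-t_w-k} to bound the blocks with exactly one point in $P$ by $\rho \cdot \max\left\{ k(k-1), \left\lfloor \frac{v-k\rho}{k-1} \right\rfloor \right\}$, count the uncovered pairs $\binom{k\rho}{2} - \rho\binom{k}{2} = \frac{k^2\rho(\rho-1)}{2}$ to bound the blocks meeting $P$ in at least two points, and add the $\rho$ blocks of $\Par$. In fact your write-up is slightly more careful than the paper's, which still carries a leftover ``$8\pc(\pc-1)$'' from the $k=4$ case in its prose.
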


\begin{proof}
Since $\Par$ is maximum, there is no block contained in $T$. 
From Lemma \ref{l-t_w-k}, 
there are at most $\pc \times \max \left\{ k(k-1), \left\lfloor \frac{v-k\pc}{k-1} \right\rfloor \right\}$
blocks having one point in $P$ and $k-1$ points in $T$. 

The number of pairs of points in $P$ that are not contained in a block of $\Par$ is 
\[ {k\rho \choose 2} - {k \choose 2}\rho = {k^2\rho (\rho -1)\over 2}.\]
Therefore there are at most $8\pc (\pc -1)$ 
blocks not in $\Par$ that contain at least two points in $P$. 
Finally, there are $\pc$ blocks in $\Par$.

In total,  there are at most 
\[
\rho \left( {k^2(\rho - 1)\over 2} + 1 + \max \left\{ k(k - 1), \left\lfloor{v - k\rho \over k-1} \right\rfloor \right\}\right)
\]
blocks in the packing.
\end{proof}

We now consider $\rho=1,2$ for general $k$. 

  \begin{Theorem}
 Suppose $v\equiv 1 \bmod (k-1)$ and $v \ge k(k-1)^2 + k$. Then   
 \[
 \beta(1, v, k) = {v - 1\over k - 1}.
 \]
 \end{Theorem}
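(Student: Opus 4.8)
The plan is to establish matching upper and lower bounds on $\beta(1,v,k)$ under the stated hypotheses, both equal to $\frac{v-1}{k-1}$.

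For the upper bound, I would invoke Theorem \ref{t.boundK} with $\rho = 1$. Setting $\rho = 1$ kills the quadratic term $\frac{k^2(\rho-1)}{2} = 0$, so the bound collapses to
\[
\beta(1,v,k) \leq 1 + \max\left\{ k(k-1), \left\lfloor \frac{v-k}{k-1} \right\rfloor \right\}.
\]
The hypothesis $v \geq k(k-1)^2 + k$ is precisely engineered so that the floor term dominates: $\frac{v-k}{k-1} \geq k(k-1)$ is equivalent to $v - k \geq k(k-1)^2$, i.e.\ $v \geq k(k-1)^2 + k$. Thus the max equals $\left\lfloor \frac{v-k}{k-1}\right\rfloor$. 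Using $v \equiv 1 \bmod (k-1)$, I would simplify: write $v = 1 + (k-1)m$, so $v - k = (k-1)m - (k-1) = (k-1)(m-1)$, giving $\left\lfloor \frac{v-k}{k-1}\right\rfloor = m - 1 = \frac{v-1}{k-1} - 1$. Hence $\beta(1,v,k) \leq 1 + \frac{v-1}{k-1} - 1 = \frac{v-1}{k-1}$, and the divisibility ensures this is an integer.

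For the lower bound, I would exhibit a $(v,k)$-packing with $\rho = 1$ achieving exactly $\frac{v-1}{k-1}$ blocks. The natural candidate is a near-pencil / star configuration: take a single point $x$ and partition the remaining $v-1$ points (note $v - 1 \equiv 0 \bmod (k-1)$) into $\frac{v-1}{k-1}$ groups of size $k-1$, forming $\frac{v-1}{k-1}$ blocks each through $x$. Every block contains $x$, so no two blocks are disjoint, forcing the maximum PPC to have size exactly $1$. This is a valid packing since distinct blocks meet only in $x$. It has exactly $\frac{v-1}{k-1}$ blocks, matching the upper bound. Combining the two gives equality.

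I expect the routine arithmetic (the floor simplification and the threshold inequality) to be entirely mechanical; the only point requiring a little care is verifying that the star construction really is a packing with maximum PPC equal to $1$ and not accidentally admitting a larger disjoint family, but since every block passes through the common point $x$, disjointness of any two blocks is impossible, so this is immediate. There is no genuine obstacle here: the whole content is that the engineered bound on $v$ makes the linear term win in Theorem \ref{t.boundK}, and the star meets that bound exactly.
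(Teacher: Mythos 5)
Your proof is correct and follows essentially the same route as the paper: the upper bound is Theorem \ref{t.boundK} specialized to $\rho = 1$ (where the hypothesis $v \ge k(k-1)^2 + k$ makes the floor term win the max, and the arithmetic collapses to $\lfloor (v-1)/(k-1)\rfloor$), and the lower bound is the same star construction of $\frac{v-1}{k-1}$ blocks through a common point, which are otherwise pairwise disjoint. You have merely written out explicitly the floor/threshold computation that the paper leaves implicit; there is no substantive difference.
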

 \begin{proof}
 Theorem \ref{t.boundK} yields the bound 
\[\beta ( 1, v, k) \le \left\lfloor {v - 1\over k - 1} \right\rfloor\] when $v \ge k(k-1)^2 + k$.
 When $v\equiv 1 \bmod (k-1)$, we can construct the desired packing by taking ${v - 1\over k - 1}$ blocks that contain a given point but are otherwise pairwise disjoint.
 \end{proof}

 For $\rho = 2$, 
 the upper bound from Theorem \ref{t.boundK} is
 \begin{eqnarray*} \beta(2, v, k) &\leq& \left\lfloor {2(v -2k) \over k-1} \right\rfloor + k^2 + 2 \\&= &
 \left\lfloor {2v-4\over k-1}\right\rfloor  + k^2 - 2
\end{eqnarray*}
when $v \geq k(k-1)^2 +2k$.

On the other hand, we can construct a packing with ${2v-4\over k-1}$ blocks having a maximum PPC of size 2 whenever $v \equiv 2 \bmod (k-1)$ and $v \geq k(k-1) + 2$. Let $v = t(k-1)+2$ where $t \geq k$. We construct a packing on the points $(\zed_t \times \{1, \dots , k-1\}) \cup \{ \infty_1, \infty_2\}$. The packing has the following $2t$ blocks:
\[ \{ (0,1), (0,2), (0,3), \dots, (0,k-1) , \infty_1\} \bmod (t,-)\] \text{and} \[\{ (0,1), (1,2), (2,3) , \dots, (k-2,k-1) , \infty_2\} \bmod (t,-).\]
Since $t \geq k$, this packing contains two disjoint blocks:
\[(0,1), (0,2), (0,3), \dots, (0,k-1) , \infty_1\} \] \text{and} \[\{ (1,1), (2,2), (3,3) , \dots, (k-1,k-1) , \infty_2\}.\]
It is clear that the packing does not contain three disjoint blocks because every block contains
$\infty_1$ or $ \infty_2$.

 Thus we have proven the following.
 
  \begin{Theorem}
 Suppose $v \equiv 2 \bmod (k-1)$ and $v \geq k(k-1)^2 +2k$. Then 
 \[
 {2v-4\over k - 1} \le \beta(2, v, k) \le {2v-4\over k - 1} + k^2 - 2.
 \]
 \end{Theorem}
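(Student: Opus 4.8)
The plan is to establish the two bounds separately, since they come from entirely different sources. The upper bound is immediate from the general machinery already in place: I would simply instantiate Theorem~\ref{t.boundK} at $\rho = 2$ and show the stated inequality. Plugging in $\rho = 2$ gives
\[
\beta(2,v,k) \le 2\left( \frac{k^2}{2} + 1 + \max\left\{ k(k-1), \left\lfloor \frac{v - 2k}{k-1} \right\rfloor \right\} \right).
\]
Under the hypothesis $v \ge k(k-1)^2 + 2k$, one checks that $\lfloor (v-2k)/(k-1) \rfloor \ge k(k-1)$, so the max is attained by the floor term, and the right-hand side simplifies to $\lfloor 2(v-2k)/(k-1) \rfloor + k^2 + 2 = \lfloor (2v-4)/(k-1) \rfloor + k^2 - 2$, exactly as in the displayed computation preceding the theorem. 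This is routine algebra, so the upper bound costs essentially nothing once Theorem~\ref{t.boundK} is granted.

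For the lower bound, the strategy is an explicit construction on the point set $(\zed_t \times \{1,\dots,k-1\}) \cup \{\infty_1,\infty_2\}$, where $v = t(k-1)+2$ and the hypothesis $v \ge k(k-1)^2 + 2k$ forces $t \ge k$ (in fact much larger, but $t \ge k$ is all we need). I would take the two base blocks $\{(0,1),(0,2),\dots,(0,k-1),\infty_1\}$ and $\{(0,1),(1,2),(2,3),\dots,(k-2,k-1),\infty_2\}$ and develop each cyclically through the $\zed_t$ coordinate, i.e.\ add $j \in \zed_t$ to the first coordinate of every non-infinity point while fixing $\infty_1,\infty_2$. This produces $2t = 2(v-2)/(k-1)$ blocks, which is the claimed count.

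The substantive part is verifying three things: that the $2t$ developed blocks genuinely form a packing (no pair repeats), that there exist two disjoint blocks, and that no three disjoint blocks exist. For the packing property I would compute the list of pair-differences arising within each base block---the first block yields all differences $(0, \{i,j\})$ among the finite points together with pairs involving $\infty_1$, while the second yields the ``shifted'' differences $(s, \{i,j\})$---and check that developing modulo $t$ covers each point-pair at most once; here the condition $t \ge k$ (or more precisely $t \ge k-1$) guarantees the shifts $0,1,\dots,k-2$ in the second block are distinct modulo $t$ so no internal collision occurs. The maximum-PPC claim is the easy structural observation already noted: every block contains exactly one of $\infty_1,\infty_2$, so any PPC has size at most $2$, and the explicit disjoint pair $\{(0,1),\dots,(0,k-1),\infty_1\}$ and $\{(1,1),(2,2),\dots,(k-1,k-1),\infty_2\}$ (disjoint precisely because $t \ge k$ keeps the second coordinates distinct from the first block's entries) witnesses size exactly $2$.

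The main obstacle I anticipate is the careful pair-coverage check for the second base block: the finite points $(0,1),(1,2),\dots,(k-2,k-1)$ have pairwise first-coordinate differences ranging over $\{1,2,\dots,k-2\}$ paired with specific second-coordinate label pairs, and I must confirm these developed orbits neither collide with each other nor with the orbit of the first block. Since the first block contributes only the difference $0$ in the first coordinate (for finite-finite pairs) while the second contributes nonzero first-coordinate differences, the two orbit families are automatically disjoint, and within the second family distinctness follows from $t \ge k-1$. Once this bookkeeping is confirmed, the construction and the theorem follow immediately.
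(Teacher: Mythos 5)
Your proposal is correct and follows essentially the same route as the paper: the upper bound by instantiating Theorem~\ref{t.boundK} at $\rho=2$ under the hypothesis $v \ge k(k-1)^2+2k$, and the lower bound via the identical cyclic construction on $(\zed_t \times \{1,\dots,k-1\}) \cup \{\infty_1,\infty_2\}$ with the same two base blocks developed modulo $t$, using $t \ge k$ for the disjoint pair and the fact that every block meets $\{\infty_1,\infty_2\}$ to cap the PPC size at $2$. Your extra bookkeeping on pair-differences (difference $0$ orbits from the first base block versus nonzero differences $1,\dots,k-2$ from the second) is a valid elaboration of the packing property that the paper leaves implicit; the only slip is cosmetic---disjointness of the exhibited pair rests on the $\zed_t$-coordinates $1,\dots,k-1$ being nonzero modulo $t$, not on the label coordinates.
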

 
 Note that we proved a stronger result when $k=4$, for sufficiently large $v$, in Theorem \ref{t.beta2}.
 
 \section{Summary}
 
 In this paper, we studied $(v, 4)$-packings with maximum parallel classes of a pre-specified size and thus we
 extended the results of \cite{stinson} which studied this problem for $(v, 3)$-packings.
 
 We presented two constructions for $(v, 4)$-packings.
 However, our method using MOLS with disjoint transversals 
provides the greatest flexibility and we also used it to construct $(v, 4)$-packings with $k > 4$. 
 
 
 Using counting arguments, we gave three upper bounds for $\beta(\rho, v, 4)$. While each successive bound improves the previous one, the latter bounds only hold for larger values of $v$. 
 
 Using the third upper bound,  we have 
 \[\beta(\rho, v, 4)  \leq {\rho v\over 3} - {\rho (\rho+1)\over 6} 
 = {\rho(v-\rho)\over 3} + {\rho (\rho - 1)\over 6}\] for sufficiently large values of $v$.  Our constructions give the lower bound 
 \[\beta (\rho, v, 4) \geq {\rho(v -\rho)\over 3} + D(v,4) \approx {\rho( v- \rho)\over 3} +{\rho (\rho-1)\over 12}.\]
 So our upper and lower bounds are very close, especially when $\rho $ is small. Also the difference between the upper and lower bounds is a constant (for a fixed value of $\rho$). 
 However, for small values of $v$ (or large values of $\rho$), the lower and/or upper bounds could potentially be improved.
 
 \section{Acknowledgement}
 
 We would like to thank Charlie Colbourn for informing us of reference \cite{GDD}, which we used in the 
 proof of Theorem \ref{t.betal}.


\begin{thebibliography}{z}
 

\bibitem{MOLS} R.J.R. Abel, C.J. Colbourn and J.H. Dinitz. Mutually orthogonal Latin squares (MOLS).
 In C.J. Colbourn and J.H. Dinitz (Eds.), Handbook of Combinatorial
Designs, Chapman \& Hall/CRC, 2007, pp.\ 160--192.

\bibitem{BIBD} R.J.R. Abel, G. Ge and J. Yin. Resolvable and near-resolvable designs.
 In C.J. Colbourn and J.H. Dinitz (Eds.), Handbook of Combinatorial
Designs, Chapman \& Hall/CRC, 2007, pp.\ 124--132.

\bibitem{kirkmanSquare} R.J.R Abel, E.R. Lamken, J. Wang. A few more Kirkman Squares and doubly near resolvable
BIBDs with block size 3. \emph{Discrete Math.} {\bf 308} (2008), 1102--1123.


\bibitem{SOLS} R.K. Brayton, D. Coppersmith and A.J. Hoffman. Self-orthogonal latin squares.
In Colloquio Internazionale sulle Teorie
Combinatorie, Tomo II, Accad. Naz. Lincei, Rome, 1976, pp.\ 509--517.

\bibitem{kirkmanSquare2} C. J.  Colbourn, E. R. Lamken, A.C.H. Ling and  W.H. Mills. The existence of Kirkman squares--doubly 
resolvable $(v, 3, 1)$-BIBDs. \emph{Designs, Codes and Crypt.} \textbf{26} (2002), 169--196. 

\bibitem{GDD}
G. Ge and A.C.H. Ling. Asymptotic results on the existence of 4-RGDs and uniform 5-GDDs. \emph{J. Combin. Des.} \textbf{13} (2005), 222--237.



\bibitem{stinson} D. R. Stinson. On partial parallel classes in partial Steiner triple systems. \emph{Discrete Math.} \textbf{344} (2021), article 112279, 7 pp.
 

\bibitem{SWY} D. R. Stinson, R. Wei and J. Yin. Packings. In C.J. Colbourn and J.H. Dinitz (Eds.), Handbook of Combinatorial
Designs, Chapman \& Hall/CRC, 2007, 550--556.

 \end{thebibliography}
\end{document}